\newtheorem{theorem}{Theorem}
\newtheorem{proposition}[theorem]{Proposition}
\newtheorem{lemma}[theorem]{Lemma}
\newtheorem{corollary}[theorem]{Corollary}
\theoremstyle{remark}
\theoremstyle{definition}
\newtheorem{remark}[theorem]{Remark}
\newtheorem{definition}[theorem]{Definition}
\numberwithin{equation}{section}
\numberwithin{theorem}{section}
\newcommand\set[1]{\left\{\,#1\,\right\}}
\newcommand\abs[1]{\left|#1\right|}
\newcommand\norm[1]{\left\Vert#1\right\Vert}
\newcommand{\lamax}{\lambda_{\text{max}}}
\newcommand{\R}{\mathbb{R}}
\newcommand{\N}{\mathbb{N}}
\newcommand{\T}{\mathbb{T}}
\newcommand{\cA}{{\mathcal A}}
\newcommand{\cC}{{\mathcal C}}
\newcommand{\cF}{{\mathcal F}}
\newcommand{\cH}{{\mathcal H}}
\newcommand{\cI}{{\mathcal I}}
\newcommand{\cJ}{{\mathcal J}}
\newcommand{\cS}{{\mathcal S}}
\DeclareMathOperator{\id}{id}
\DeclareMathOperator{\tr}{tr}
\DeclareMathOperator{\curl}{curl}
\DeclareMathOperator{\divv}{div}
\begin{document}
\title{The Rayleigh-Taylor instability with local energy dissipation}
\author{Bj\"orn Gebhard \and J\'ozsef J. Kolumb\'an}
\date{}
\maketitle

\begin{abstract}
We consider the inhomogeneous incompressible Euler equations including their local energy inequality as a differential inclusion. Providing a corresponding convex integration theorem and constructing subsolutions, we show the existence of locally dissipative Euler flows emanating from the horizontally flat Rayleigh-Taylor configuration and having a mixing zone which grows quadratically in time. For the Rayleigh-Taylor instability these are the first turbulently mixing solutions known to respect local energy dissipation, and outside the range of Atwood numbers considered in \cite{GKSz}, the first weakly admissible solutions in general. In the coarse grained picture the existence relies on one-dimensional subsolutions described by a family of hyperbolic conservation laws, among which one can find the optimal background profile appearing in the scale invariant bounds from \cite{Kalinin_Menon_Wu}, and as we show, the optimal conservation law with respect to maximization of the total energy dissipation.  
\end{abstract}

\section{Introduction}

The Rayleigh-Taylor instability is one of the most classical hydrodynamic instabilities \cite{Rayleigh,Taylor}, occurring whenever two fluids of different densities are accelerated into each other in the ``wrong'' direction. Such an acceleration can be due to a gravitational force acting on two layered incompressible fluids with the heavier one on top of the lighter one, i.e. for instance in the flat resting two phase configuration with density and velocity given by
\begin{align}\label{eq:flat_initial_data}
    \rho_0(x)=\begin{cases}
    \rho_+,&x_3>0,\\
    \rho_-,&x_3<0,
    \end{cases}\quad v_0(x)=0,
\end{align}
where $0<\rho_-<\rho_+$ are the homogeneous densities of the two fluids. In real world experiments and numerical simulations (we refer to the surveys \cite{Boffetta_Mazzino_survey,sharp_survey,zhou_survey_1,zhou_survey_2}) it is observed that the initially separated fluids begin to turbulently mix in a zone with upper and lower bounds given by the sets
\begin{align*}
    \set{x:x_3=\alpha_\pm gA t^2}.
\end{align*}
Here $g>0$ denotes the gravity constant, $A:=\frac{\rho_+-\rho_-}{\rho_++\rho_-}$ is the Atwood number, and $\alpha_\pm>0$, $\alpha_+$ for the upper boundary, $\alpha_-$ for the lower, are constants to be sought through experiments and simulations.

In the present article we consider ideal fluids described by the incompressible Euler equations with local energy (in)equality, these are
\begin{align}\label{eq:euler_only}
\begin{split}
\partial_t(\rho v)+\text{div }(\rho v\otimes v) + \nabla p &= - \rho g e_n,\\
\text{div } v&=0,\\
\partial_t \rho + \text{div }(\rho v)&=0,
\end{split}
\end{align}
as well as
\begin{align}\label{eq:euler_local_energy_inequality}
    \partial_t \left(\frac{1}{2}\rho|v|^2+\rho g x_n\right)+\divv \left(\left(\frac{1}{2}\rho|v|^2+\rho g x_n+p\right)v\right)&\leq 0.
\end{align}
The equations are considered on $\Omega\times(0,T)$, where $\Omega$ is a bounded domain in $\R^n$, $n=2,3$, or a periodic channel $\T^{n-1}\times(-L,L)$, and $T>0$.
The unknown functions are the mass density $\rho:\Omega\times(0,T)\rightarrow \R$, the velocity field $v:\Omega\times(0,T)\rightarrow \R^n$ and the pressure $p:\Omega\times(0,T)\rightarrow \R$, whereas $g>0$ is the given gravity constant and $e_n$ is the $n$-th unit vector. The left-hand side of \eqref{eq:euler_local_energy_inequality} defines the local energy dissipation measure $\nu$.
The divergence-free condition is augmented by the usual no-penetration boundary condition 
\begin{align}\label{eq:boundary_condition_v}
    v\cdot \vec{n} =0\text{ on }\partial\Omega\times(0,T).
\end{align}
In the case of a periodic channel the latter condition is meant to be imposed on $\T^{n-1}\times\{\pm L\}$, where $\vec{n}=\pm e_n$. 

We emphasize that equations \eqref{eq:euler_only} automatically imply inequality \eqref{eq:euler_local_energy_inequality} to hold with equality provided the considered solution is regular enough, see for example Feireisl et al. \cite{feireisl_etal_energy_conservation}, and also \cite{chen_yu_onsager,constantin_etal_onsager,leslie_shvydkoy_onsager} for other related Onsager-type statements.

Solutions to which this applies emerge from the local in time well-posedness results for initial data in sufficiently high H\"older, Sobolev or Besov spaces \cite{bei_valli_I_local_wellposedness,bei_valli_II_local_wellposedness,danchin_local_wellposedness,danchin_fanelli_local_wellposedness,marsden_local_wellposedness,valli_zajac_local_wellposedness}. The spaces considered in \cite{bei_valli_I_local_wellposedness,bei_valli_II_local_wellposedness,danchin_local_wellposedness,danchin_fanelli_local_wellposedness,marsden_local_wellposedness,valli_zajac_local_wellposedness} all have the property that they embed into the space of Lipschitz continuous functions. In contrast, the initial data of our interest clearly is not continuous. 

While the Rayleigh-Taylor instability has also been studied in well-posed settings, for instance by a smooth approximation of $\rho_0$ or by adding surface tension or viscosity to the model \cite{hwang_guo_RT,jiang_jiang_zhan_RT,lian_RT,pruess_simonett_RT}, the initial value problem as stated in \eqref{eq:flat_initial_data}, \eqref{eq:euler_only}, \eqref{eq:boundary_condition_v} is ill-posed and naturally leads one to use weak solutions. Using convex integration, Sz\'ekelyhidi and the authors managed to give in \cite{GKSz} the first existence result of weak solutions to \eqref{eq:euler_local_energy_inequality}, \eqref{eq:boundary_condition_v} emanating from \eqref{eq:flat_initial_data} and having a mixing zone which grows quadratically in time.

Contrary to classical solutions, it is known that weak solutions of \eqref{eq:euler_only} in general can be unphysical in the sense that they may violate the energy dissipation inequality \eqref{eq:euler_local_energy_inequality}. This can happen in a way such that even a weak total energy admissibility such as
\begin{align}\label{eq:weak_energy_admissibility}
    E(t)\leq E(0)\text{ for almost every }t\in(0,T),
\end{align}
where $E$ denotes the total energy
\begin{align*}
    E(t):=\int_\Omega \frac{1}{2}\rho(t,x)\abs{v(t,x)}^2+\rho(t,x)gx_n\:dx,
\end{align*}
is violated. Note that \eqref{eq:weak_energy_admissibility} indeed follows from \eqref{eq:euler_local_energy_inequality}, \eqref{eq:boundary_condition_v}. Consequently, weak solutions have to be constructed in a way such that their local, or at least their global, energy dissipation can be controlled.  
In \cite{GKSz} it was shown that the constructed solutions satisfy the weak admissibility condition \eqref{eq:weak_energy_admissibility} provided the Atwood number is in a sufficiently high range. One of the main goals of the present article is to strengthen the machinery for the construction of weak solutions to \eqref{eq:euler_only}, i.e. the corresponding convex integration theorem, in a way such that the obtained solutions automatically satisfy not only the global admissibility \eqref{eq:weak_energy_admissibility}, but also the local energy inequality \eqref{eq:euler_local_energy_inequality}. 

As in \cite{GK-EE} by the authors for the homogeneous Euler equations and in \cite{Markfelder_EE} by Markfelder for compressible Euler, this improvement is achieved by directly including \eqref{eq:euler_local_energy_inequality} and corresponding variables into the differential inclusion describing the Euler equations. During our investigation we establish the full relaxation of the differential inclusion in the unbounded case and a large part of it in the bounded case. This large part is sufficient to state a typical $h$-principle or convex integration theorem for system \eqref{eq:euler_only}, \eqref{eq:euler_local_energy_inequality}. We refer to Section \ref{sec:statement_of_results} for the detailed statements.

Concerning actual examples of Euler solutions that obey the local energy dissipation inequality, we utilize the improved convex integration theorem to give the following statement for the Rayleigh-Taylor configuration \eqref{eq:flat_initial_data}. For the precise notion of solution we refer to Definition \ref{def:weaksols}.

Let $F_\lambda:[\rho_-,\rho_+]\rightarrow\R$, $\lambda>0$, $\rho_+>\rho_->0$, be the uniformly convex function
\begin{align}\label{eq:F_in_thm_in_introduction}
    F_\lambda(r):=-\lambda \frac{(\rho_+-r)(r-\rho_-)}{\rho_++\rho_--r},
\end{align}
and let $\bar{\rho}_\lambda(x_n,t)$ be the unique entropy solution of the Riemann problem
\begin{align}\label{eq:hyperbolic_law_in_theorem_in_introduction}
    \partial_t\bar{\rho}+gt\partial_{x_n} (F_\lambda(\bar{\rho}))=0,\quad \bar{\rho}(\cdot,0)=\rho_0.
\end{align}
\begin{theorem}\label{thm:introduction}
Let $\rho_+>\rho_->0$, $\Omega=\T^{n-1}\times(-L,L)$, $L>0$, $\abs{\T}=1$, $n=2,3$ and $\lambda\in (0,1/2)$. There exist infinitely many weak solutions $(\rho,v)\in L^\infty(\Omega\times(0,T);\R\times \R^n)$ of \eqref{eq:flat_initial_data}, \eqref{eq:euler_only}, \eqref{eq:euler_local_energy_inequality},  \eqref{eq:boundary_condition_v} with local dissipation measure arbitrarily close to
\begin{align}\label{eq:local_dissipation_measure_in_thm_in_introduction}
    \nu_\lambda=(1-2\lambda)g^2tF_\lambda(\bar{\rho}_\lambda).
\end{align}
The solutions satisfy $\rho\in\set{\rho_-,\rho_+}$ almost everywhere and are induced by  $\bar{\rho}_\lambda$ as their common horizontally averaged profile (subsolution). Moreover, their total energy balance is (again up to an arbitrarily small error) given by
\begin{align}\label{eq:total_balance_in_thm_in_introduction}
\frac{E(t)-E(0)}{\rho_+-\rho_-}=\lambda^2\left(\lambda-\frac{1}{2}\right)\frac{g^3t^4A^2}{3(1-A^2)}.
\end{align}
\end{theorem}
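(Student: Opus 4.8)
The plan is to deduce Theorem \ref{thm:introduction} from the convex integration theorem announced for the differential inclusion associated with \eqref{eq:euler_only}, \eqref{eq:euler_local_energy_inequality}, by exhibiting a suitable subsolution built from the one-dimensional profile $\bar\rho_\lambda$. First I would set up the subsolution: take $\rho$ replaced by its horizontal average $\bar\rho_\lambda(x_n,t)$ solving \eqref{eq:hyperbolic_law_in_theorem_in_introduction}, with momentum $m=\rho v$ having zero horizontal average, and I would make the ansatz that the relevant $2$-tensor and energy-type variables are likewise $x$-independent in the horizontal directions and constant (or explicitly given) in an appropriate sense inside the mixing zone $\{|x_n|<\alpha_\pm g t^2\}$, matching the initial data \eqref{eq:flat_initial_data} for $t=0$ and remaining in the genuinely subsolution (strictly subresolved) regime for $t>0$. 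The choice \eqref{eq:F_in_thm_in_introduction} of $F_\lambda$ is exactly what makes the third equation in \eqref{eq:euler_only}, after horizontal averaging, reduce to the scalar conservation law \eqref{eq:hyperbolic_law_in_theorem_in_introduction}; the factor $gt$ comes from the fact that the vertical momentum generated by gravity grows linearly in time, and the nonlinearity $F_\lambda$ encodes the turbulent flux of density across the mixing zone. I would then verify that the triple (subsolution data) lies in the interior of the convex hull $\mathcal{U}$ of the constitutive set, with the correct boundary behaviour at the edges of the mixing zone and the correct trace at $t=0$, so that the convex integration theorem from Section \ref{sec:statement_of_results} applies and yields infinitely many genuine solutions $(\rho,v)$ with $\rho\in\{\rho_-,\rho_+\}$ a.e., agreeing with the subsolution outside the mixing zone and with local dissipation measure arbitrarily close to $\nu_\lambda$.

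Second, I would compute $\nu_\lambda$ and the energy balance \eqref{eq:total_balance_in_thm_in_introduction} explicitly. The local dissipation rate associated to the subsolution is a pointwise algebraic expression in $\bar\rho_\lambda$ and the subsolution parameters; plugging in the extremal (maximally subresolved) choice of parameters — the one the convex integration scheme is allowed to approach — gives precisely \eqref{eq:local_dissipation_measure_in_thm_in_introduction}, namely $(1-2\lambda)g^2 t F_\lambda(\bar\rho_\lambda)$, where the prefactor $1-2\lambda$ reflects that a fraction $2\lambda$ of the available kinetic energy is stored in the (unresolved) velocity fluctuations rather than dissipated. For the total energy balance I would integrate the local identity over $\Omega$, or equivalently integrate $\nu_\lambda$ in $x_n$ and $t$; since $\bar\rho_\lambda$ solves a Riemann problem it is self-similar in the variable $x_n/(g t^2/2)$, so $\int_{-L}^L F_\lambda(\bar\rho_\lambda(x_n,t))\,dx_n$ scales like $g t^2$ times a dimensionless constant depending only on $\lambda$ and the Atwood number $A$. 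Carrying the time integral $\int_0^t g^2 s \cdot g s^2\,ds = g^3 t^4/4$ and tracking the $\lambda$- and $A$-dependent constant produces \eqref{eq:total_balance_in_thm_in_introduction}; the sign $\lambda-\tfrac12<0$ confirms genuine dissipation for $\lambda\in(0,1/2)$.

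The main obstacle, and where most of the work lies, is the first part: showing that the one-dimensional profile $\bar\rho_\lambda$ together with a correctly chosen momentum field and auxiliary tensor/energy variables actually defines an admissible subsolution in the strict interior of the relevant convex hull, uniformly up to the edges of the mixing zone and compatibly with the discontinuous initial data. This requires an explicit description of the relaxed set (the $\mathcal{U}$-set) for the enlarged inclusion that now includes the local energy variable, and a verification that the self-similar Riemann solution, which has rarefaction/shock structure and whose values at the mixing-zone boundary hit the extreme densities $\rho_\pm$, stays in the interior except exactly on the boundary where it must touch it in the right way. Handling the entropy solution's possible shocks (where $\bar\rho_\lambda$ is only BV, not Lipschitz) and the matching at $t=0$ in the convex integration scheme — typically via a short-time mollification or a careful choice of the subsolution near $t=0$ — is the delicate technical point; once the subsolution is in place, the application of the convex integration theorem and the energy bookkeeping are comparatively routine.
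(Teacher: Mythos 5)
Your strategy is essentially the paper's: construct a one-dimensional subsolution from the entropy solution of the conservation law driven by $F_\lambda$, verify the hypotheses of the convex integration theorem, and obtain \eqref{eq:total_balance_in_thm_in_introduction} either by computing the subsolution's total energy directly via \eqref{eq:total_energy_of_sols} or, as you propose, by integrating $\nu_\lambda$ in space and time --- both bookkeepings give the same result. There is, however, one genuine refinement your outline glosses over. With the canonical one-dimensional ansatz, in which $\sigma$ is taken as in \eqref{eq:choice2} to diagonalize $\tilde M(z)$, the inequality \eqref{eq:sufficient_condition_for_Linfty_hull} degenerates to an \emph{equality}: one obtains only a non-strict subsolution (Proposition \ref{prop:subsgen}), and a direct attempt to ``verify that the triple lies in the interior of the convex hull'' would fail. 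One must first perturb $e$ upward by an explicit positive correction $\delta t^2(\rho_+-\mu)^2(\mu-\rho_-)^2$ chosen small enough to keep $\nu\le 0$; this is Lemma \ref{lem:strict_subs}, it is why the theorem asserts a dissipation measure only \emph{arbitrarily close to} $\nu_\lambda$, and it works only in the periodic channel (in the box $\Omega=(0,1)^{n-1}\times(-L,L)$, one-dimensionality rigidly forces equality and no perturbation survives). Also, a minor point: your worry about shocks and BV-regularity of $\bar\rho_\lambda$ is misplaced. Since $\rho_+>\rho_-$ and $F_\lambda$ is uniformly convex, the Riemann solution is a pure rarefaction fan; $\bar\rho_\lambda$ is continuous on $\overline{\mathscr{D}}$ except at the single point $(x_n,t)=(0,0)$, and all other subsolution components are continuous on $\overline{\mathscr D}$. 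This is exactly what makes the $t=0$ matching and the application of Theorem \ref{thm:main_theorem} unproblematic, so no mollification near $t=0$ is needed.
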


\begin{remark}
a) Normalized to $\bar{s}_\lambda\in[-1,1]$ via $\bar{\rho}_\lambda(x_n,t)=:\frac{\rho_++\rho_-}{2}(1+\bar{s}_\lambda(x_n,t)A)$, and stated in explicit form there holds
\begin{align}\label{eq:rho_bar_explicit}
    \bar{s}_\lambda\left(\frac{\lambda  Agt^2}{2}\xi,t\right)=\frac{1}{A}\left(1-\sqrt{\frac{1-A^2}{1+A\xi}}\right),\quad \xi\in \left(\frac{-2}{1+A},\frac{2}{1-A}\right),
\end{align}
as well as $\bar{s}_\lambda\left(\frac{\lambda  Agt^2}{2}\xi,t\right)=1$ for $\xi\geq \frac{2}{1-A}$ and $\bar{s}_\lambda\left(\frac{\lambda A gt^2}{2}\xi,t\right)=-1$ for $\xi\leq \frac{-2}{1+A}$.
In particular, the mixing zone at time $t$ associated with the solutions is given by
\begin{align*}
    \set{x\in\Omega:\frac{-\lambda}{1+A}Agt^2<x_n<\frac{\lambda}{1-A}Agt^2}.
\end{align*}

b) The time $T=T(\lambda,g,A,L)>0$ is chosen such that the mixing zone does not reach the top and bottom boundaries $\T^{n-1}\times\{\pm L\}$.

c) Clearly $\nu_\lambda\leq 0$. For clarification, the actual $\nu$, arbitrarily close to $\nu_\lambda$, also satisfies $\nu\leq 0$. 
\end{remark}

We would like to point out that the above family of conservation laws, and thus the family of their induced solutions, has two prominent members. First of all, one sees that the limiting case $\lambda\rightarrow \frac{1}{2}$ gives rise to a perfect global \eqref{eq:total_balance_in_thm_in_introduction} and local \eqref{eq:local_dissipation_measure_in_thm_in_introduction} energy balance. In fact the balance law \eqref{eq:hyperbolic_law_in_theorem_in_introduction} with $\lambda=1/2$ also arises as the optimal background profile in the scale invariant bounds obtained by Kalinin, Menon, Wu \cite{Kalinin_Menon_Wu}. These bounds rely on energy conservation. Computing, cf. \eqref{eq:computation_E_pot}, the increase in potential energy of our solutions for $\lambda=1/2$ we indeed find
\begin{align}\label{eq:potential_energy_KMW}
    \frac{E_{pot}(0)-E_{pot}(t)}{\rho_+-\rho_-}=\frac{g^3t^4A^2}{24(1-A^2)},
\end{align}
and therefore deduce the following statement.
\begin{corollary}\label{cor:optimal_bounds}
In the immiscible limit the scale invariant bound for the decay of potential energy \cite[Theorem 2.1]{Kalinin_Menon_Wu} is sharp.\footnote{For comparison  we would like to remark that \cite[Equation (2.8)]{Kalinin_Menon_Wu} indeed is in agreement with \eqref{eq:potential_energy_KMW}, but in the definition of the specific background profile \cite[Equation (3.3)]{Kalinin_Menon_Wu}  that realizes the stated bound an additional factor $1/2$ is missing when defining $\tau$.}
\end{corollary}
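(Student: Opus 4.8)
The plan is to read the corollary off from Theorem~\ref{thm:introduction} by letting $\lambda\uparrow\tfrac12$ inside the constructed family and comparing with the upper bound of \cite[Theorem 2.1]{Kalinin_Menon_Wu}. First I would recall precisely what that bound says: for solutions of \eqref{eq:euler_only} emanating from \eqref{eq:flat_initial_data} it gives a scale invariant \emph{upper} bound on the amount of potential energy released up to time $t$, and in the immiscible limit (two-valued density) this bound takes the value $\frac{g^3t^4A^2}{24(1-A^2)}(\rho_+-\rho_-)$, which is \cite[Equation (2.8)]{Kalinin_Menon_Wu}. Since that bound relies on energy conservation, the task reduces to exhibiting a family of admissible, genuinely immiscible Euler flows that are asymptotically energy conserving and whose released potential energy converges to this value.

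Such a family is provided by Theorem~\ref{thm:introduction}. For each $\lambda\in(0,\tfrac12)$ I would take one of the solutions $(\rho,v)$ given there: by construction $\rho\in\{\rho_-,\rho_+\}$ almost everywhere, so the flow is immiscible, and it has horizontal average $\bar\rho_\lambda$; hence, using $\abs{\T}=1$, the potential energy is the one-dimensional integral $E_{pot}(t)=g\int_{-L}^{L}\bar\rho_\lambda(x_n,t)\,x_n\,dx_n$ with $E_{pot}(0)=\tfrac12 g(\rho_+-\rho_-)L^2$. Inserting the explicit profile \eqref{eq:rho_bar_explicit} and changing variables $x_n=\tfrac{\lambda Agt^2}{2}\xi$ reduces everything to an integral over the fixed interval $\xi\in(\tfrac{-2}{1+A},\tfrac{2}{1-A})$ whose integrand no longer depends on $\lambda$, so that $E_{pot}(0)-E_{pot}(t)$ is proportional to $\lambda^2$; this is the routine computation \eqref{eq:computation_E_pot} and it yields
\begin{align*}
\frac{E_{pot}(0)-E_{pot}(t)}{\rho_+-\rho_-}=\frac{\lambda^2 g^3t^4A^2}{6(1-A^2)}\xrightarrow[\ \lambda\to\frac12\ ]{}\frac{g^3t^4A^2}{24(1-A^2)},
\end{align*}
which is \eqref{eq:potential_energy_KMW} (consistently, \eqref{eq:total_balance_in_thm_in_introduction} then pins the kinetic energy down as $\tfrac{\lambda^3 g^3t^4A^2}{3(1-A^2)}(\rho_+-\rho_-)$). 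At the same time $\nu_\lambda=(1-2\lambda)g^2tF_\lambda(\bar\rho_\lambda)\to 0$ as $\lambda\uparrow\tfrac12$, so along this family the local — hence also the total — energy dissipation vanishes in the limit and the solutions become asymptotically energy conserving, i.e.\ they enter the closure of the class to which \cite[Theorem 2.1]{Kalinin_Menon_Wu} applies.

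Combining the two steps proves sharpness: admissible immiscible Euler flows release, up to an arbitrarily small error, exactly the maximal amount of potential energy permitted by \cite[Theorem 2.1]{Kalinin_Menon_Wu} in the immiscible limit, so that bound cannot be improved. I expect the only delicate point — rather than any hard estimate — to be the bookkeeping at the boundary $\lambda=\tfrac12$: one must verify that the solutions of Theorem~\ref{thm:introduction} for $\lambda$ close to $\tfrac12$, together with their small but nonzero dissipation defect and the small errors in \eqref{eq:total_balance_in_thm_in_introduction} and \eqref{eq:potential_energy_KMW}, genuinely belong to (the closure of) the admissibility framework of \cite{Kalinin_Menon_Wu}, and that all of these errors can be sent to zero simultaneously as $\lambda\uparrow\tfrac12$; the underlying integral \eqref{eq:computation_E_pot} itself is elementary.
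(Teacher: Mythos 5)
Your proposal is correct and follows essentially the same route as the paper: compute the released potential energy via \eqref{eq:computation_E_pot} and compare it with the value in \cite[Equation (2.8)]{Kalinin_Menon_Wu}, noting that the $\lambda=\tfrac12$ conservation law is exactly the optimal background profile there. The only addition you make is to spell out the limiting procedure $\lambda\uparrow\tfrac12$ (together with $\nu_\lambda\to 0$), which is a sensible clarification since Theorem~\ref{thm:introduction} gives genuine solutions only for $\lambda<\tfrac12$, whereas the paper phrases the computation directly at $\lambda=\tfrac12$.
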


The connection to \cite{Kalinin_Menon_Wu} is not the only reason for considering the family $F_\lambda$. It is clear from \eqref{eq:total_balance_in_thm_in_introduction} that $\lambda =\frac{1}{3}$ is maximizing the total energy dissipation within the family $F_\lambda$. However, it turns out that this dissipation is neither beaten by any other convex $F$ outside the family.
\begin{proposition}\label{prop:lambda_1_3}
Among all one-dimensional (potentially non-strict) subsolutions with $\bar{\rho}$ given as the entropy solution to a Riemann problem 
\begin{align*}
    \partial_t\bar{\rho}+gt\partial_{x_n} (F(\bar{\rho}))=0,\quad \bar{\rho}(\cdot,0)=\rho_0,
\end{align*}
with $F\in\cC^2([\rho_-,\rho_+])$ uniformly convex and $F(\rho_\pm)=0$, the induced total energy dissipation $E(0)-E(t)$ is maximal for $F=F_{\frac{1}{3}}$.
\end{proposition}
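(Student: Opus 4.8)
Here is how I would approach the proof.

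\medskip

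The plan is to reduce the maximization to an elementary scale-invariant inequality. Following the computations behind Theorem~\ref{thm:introduction}, one first puts the total dissipation $D[F]:=E(0)-E(t)$ of the most dissipative (possibly non‑strict) subsolution induced by $F$ in closed form. After the substitution $\tau=gt^2/2$ the Riemann problem \eqref{eq:hyperbolic_law_in_theorem_in_introduction} becomes a classical conservation law whose entropy solution is the rarefaction $\bar\rho(x_n,t)=(F')^{-1}(2x_n/(gt^2))$; passing to this self-similar variable and integrating by parts with $F(\rho_\pm)=0$ gives the released potential energy $E_{pot}(0)-E_{pot}(t)=\tfrac{g^3t^4}{8}\int_{\rho_-}^{\rho_+}(F')^2\,du$. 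On the other hand the averaged vertical momentum equals $gtF(\bar\rho)$ by the continuity equation and $F(\rho_\pm)=0$, and since the fine-scale density only attains the values $\rho_\pm$ while the averaged velocity vanishes (incompressibility plus the no-penetration condition), a two-point Jensen inequality bounds the kinetic energy density below by $\tfrac12\,(gtF(\bar\rho))^2\,\ell(\bar\rho)/q(\bar\rho)$, where $q(u):=(\rho_+-u)(u-\rho_-)$ and $\ell(u):=\rho_++\rho_--u$; this bound is saturated in the limit by non-strict subsolutions, whence $E_{kin}(t)=\tfrac{g^3t^4}{4}\int_{\rho_-}^{\rho_+}q(u)^{-1}F(u)^2\ell(u)F''(u)\,du$. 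Thus it suffices to maximise the rescaled dissipation
\[
\cD[F]:=\int_{\rho_-}^{\rho_+}(F')^2\,du-2\int_{\rho_-}^{\rho_+}q^{-1}F^2\ell F''\,du=E_1[F]-2E_2[F]
\]
over $F\in\cC^2([\rho_-,\rho_+])$ with $F''>0$ and $F(\rho_\pm)=0$, and to check the maximum equals $\cD[F_{1/3}]$.

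\medskip

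One integration by parts (using $F(\rho_\pm)=0$, i.e. $\int(F')^2=-\int FF''$) collapses the functional to $\cD[F]=-\int_{\rho_-}^{\rho_+}FF''\,(1+2\ell F/q)\,du$, and convexity together with $F(\rho_\pm)=0$ forces $F\le 0$, so $-FF''\ge 0$ and the integrand has a definite structure. Now exploit homogeneity: since $E_1$ is quadratic and $E_2$ cubic in $F$, both strictly positive for admissible $F$, one has $\cD[tF]=t^2E_1-2t^3E_2$, and as dilations $F\mapsto tF$ ($t>0$) preserve admissibility,
\[
\cD[F]\ \le\ \max_{t>0}\cD[tF]\ =\ \frac{E_1[F]^3}{27\,E_2[F]^2},
\]
the maximiser being $t=E_1/(3E_2)$. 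Writing $\psi:=q/\ell$ (the concave profile with $\psi(\rho_\pm)=0$, $\psi''<0$, for which $F_\lambda=-\lambda\psi$), for the family $F=F_\lambda$ one computes $E_1=\lambda^2I_0$ and $E_2=\lambda^3I_0$ with $I_0:=\int_{\rho_-}^{\rho_+}(\psi')^2\,du=\int_{\rho_-}^{\rho_+}(-\psi\psi'')\,du$. Hence $E_1^3/E_2^2=I_0$ throughout the family, and the optimal rescaling of every $F_\lambda$ is $F_{1/3}$, with $\cD[F_{1/3}]=I_0/27$ — in agreement with \eqref{eq:total_balance_in_thm_in_introduction} at $\lambda=\tfrac13$.

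\medskip

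What remains — and this is the main obstacle — is the sharp inequality $E_1[F]^3\le I_0\,E_2[F]^2$ for all admissible $F$, with equality exactly on the family $\{F_\lambda:\lambda>0\}$; combined with the dilation estimate it yields $\cD[F]\le I_0/27=\cD[F_{1/3}]$, and tracking the two equality cases (equality in the dilation step means $F$ sits at its optimal scaling; equality in $E_1^3\le I_0E_2^2$ means $F\in\{F_\lambda\}$) pins down $F=F_{1/3}$ as the unique maximiser. When $G:=F/\psi$ is constant — the family — this inequality is precisely H\"older's inequality with exponents $(3/2,3)$ applied to the measure $(-\psi\psi'')\,du$ of total mass $I_0$. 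The nontrivial point is the general case, where convexity of $F=G\psi$ must be used: rewriting (a further integration by parts) $\cD[F]=\int\psi^2(G')^2(1+4G)\,du+\int(-\psi\psi'')\,G^2(1+2G)\,du$ and using the factorisation $\tfrac1{27}-G^2(1+2G)=2(G+\tfrac13)^2(\tfrac16-G)\ge 0$ valid for $G\le 0$, the task is to dominate the only-possibly-positive first integral (nonzero only where $G>-\tfrac14$) by the dissipation deficit of the second. A pointwise estimate is hopeless because $G'$ is not controlled by $G$, so convexity has to enter in integrated form — e.g. by testing $F''\ge 0$ against a nonnegative multiplier built from $\psi^2/(-\psi'')=q^2\ell/(2\rho_+\rho_-)$ and $(1+4G)_+$, and combining with the above factorisations. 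Engineering this final estimate, together with the accompanying uniqueness statement, is the crux; that convexity cannot be dispensed with is seen by noting that, without it, thin upward spikes of $G$ push $E_1^3/E_2^2$ above $I_0$.
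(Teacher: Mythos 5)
Your reduction is correct and matches the paper's setup: you reproduce (up to a factor) the functional
\[
\cI(F)=\int_{\rho_-}^{\rho_+}\tfrac{d}{dr}\!\left(a^{-1}F^2\right)F'+\tfrac12(F')^2\,dr,
\]
and your scaling trick is a clean reformulation. Since $E_1[F]=\int(F')^2$ is quadratic and $E_2[F]=\int a^{-1}F^2F''$ is cubic (both positive), optimizing over dilations $F\mapsto tF$ gives $\cD[F]\le E_1^3/(27E_2^2)$, and as you note this reduces the proposition to the scale-invariant inequality $E_1^3\le I_0\,E_2^2$ with $I_0=\int(a')^2$, which is indeed equivalent to the paper's bound $\cI(F)\le\cI(-a/3)$. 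Your algebraic identities (the rewriting of $\cD$ in terms of $G=F/a$, the factorization $\tfrac1{27}-G^2(1+2G)=2(G+\tfrac13)^2(\tfrac16-G)$) are also correct and coincide with the paper's splitting of the integrand.

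However, there is a genuine gap, and you flag it yourself: the inequality $E_1^3\le I_0\,E_2^2$ for convex $F$ is never proved. You observe that the only-possibly-positive part of $\cD$ lives on $\{G>-\tfrac14\}$ and that convexity must be used in integrated form, and you gesture at "testing $F''\ge0$ against a nonnegative multiplier," but you do not carry out any estimate, and you explicitly call this the "crux." This is precisely where all the work is. The paper closes it with an interval decomposition that is considerably more specific than your sketch: on $\{H>\tfrac14\}$ (where $H=-G$) both terms are handled directly by sign and by $\sup_{x\ge0}x^2(1-2x)=\tfrac1{27}$; on each maximal subinterval $(r_1,r_2)$ of $\{H<\tfrac14\}$, one integrates by parts against $-F''\le0$, exploits that the map $x\mapsto x^2/\alpha+x/2$ is monotone increasing on $[-\alpha/4,\infty)$ (this is why the threshold is $\tfrac14$, not $\tfrac13$) to compare with the profile $-a/4$, then uses concavity of $a$ and the boundary identity $a(r_i)H(r_i)^k=a(r_i)/4^k$ to evaluate the remaining boundary terms and show the contribution is strictly negative. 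None of this appears in your argument, and the comparison profile $-a/4$ — the key device — is not identified. As written, the proposal is a correct reduction plus a correct heuristic, but the central estimate that makes the proposition true is missing.
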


For the precise notion of one-dimensional subsolutions, which can be seen as the horizontally averaged profiles $\bar{\rho}$, and in general more precise statements, we refer to Sections \ref{sec:statement_of_results}, \ref{sec:statement_of_results_2}.

\subsection{Change of coordinates}\label{sec:change_of_coordinates}

In order to formulate our results it is more convenient to transform the equations to an accelerated domain in which the contribution of the gravitational body force is eliminated. For that we define 
\begin{align}\label{eq:definition_accelerated_domain}
\mathscr{D}:=\set{(y,t)\in\R^n\times(0,T):y-\frac{1}{2}gt^2e_n\in\Omega},
\end{align}
as well as for $t\in(0,T)$ the slice  
\[
\mathscr{D}(t):=\set{y\in\R^n:(y,t)\in\mathscr{D}}=\Omega+\frac{1}{2}gt^2e_n.
\]
Then, the inhomogeneous Euler equations \eqref{eq:euler_only} stated on $\Omega\times(0,T)$ for the functions $(\rho,v,p)$ are equivalent to 
\begin{align}\label{eq:main0g}
\begin{split}
\partial_t(\mu w)+\text{div }(\mu w\otimes w) + \nabla q &= 0,\\
\text{div } w&=0,\\
\partial_t \mu + \text{div }(\mu w)&=0,
\end{split}
\end{align}
stated on the spacetime set $\mathscr D$, where $(\mu,w,q)$ are defined 
 by means of the coordinate transformation
$
    y:=x+\frac{1}{2}gt^2e_n,~x\in\Omega,~t\in(0,T)
$
and
\begin{align}\label{eq:transf}
\begin{split}
\rho(x,t)&=\mu\left(y,t\right),\\
v(x,t)&=w\left(y,t\right)-g t e_n,\\
p(x,t)&=q\left(y,t\right).
\end{split}
\end{align}
In addition, the boundary condition \eqref{eq:boundary_condition_v} translates to 
\begin{align}\label{eq:boundary_condition_w}
    (w(\cdot,t)-gte_n)\cdot \vec{n}=0~\text{on }\partial\mathscr{D}(t),~t\in(0,T),
\end{align}
whereas the initial data remains untouched, i.e. $\mu(\cdot,0)=\rho(\cdot,0)$, $v(\cdot,0)=w(\cdot,0)$.
It can be checked that this equivalence holds true for weak solutions, cf. Appendix \ref{sec:defs}. For the sake of readability we have also moved all precise definitions concerning weak solutions to Appendix \ref{sec:defs}.

Concerning the dissipation inequality, we also have that \eqref{eq:euler_local_energy_inequality} on $\Omega\times (0,T)$ is equivalent to
\begin{align}\label{eq:euler_0g_local_energy_inequality}
    \partial_t \left(\frac{1}{2}\mu|w|^2\right)+\divv \left(\left(\frac{1}{2}\mu|w|^2+q\right)w\right)&\leq 0
\end{align}
on $\mathscr D$, provided that \eqref{eq:euler_only} and thus also \eqref{eq:main0g} already hold true.
In fact not only the sign is shared by the two dissipation measures, there is a one-to-one translation in the sense that
\begin{align}\begin{split}\label{eq:translation_of_dissipation_measures}
\partial_t \left(\frac{1}{2}\rho|v|^2+\rho g x_n\right)+\divv& \left(\left(\frac{1}{2}\rho|v|^2+\rho g x_n+p\right)v\right)\\&=\partial_t \left(\frac{1}{2}\mu|w|^2\right)+\divv \left(\left(\frac{1}{2}\mu|w|^2+q\right)w\right)
\end{split}
\end{align}
in an appropriate distributional sense,
with the left-hand side evaluated in $(x,t)$ coordinates and the right-hand side in $(y,t)$ coordinates, see Lemma \ref{lem:translation}.

\subsection{The associated differential inclusion}\label{sec:differential_inclusion}

We now recast equations \eqref{eq:main0g}, \eqref{eq:euler_0g_local_energy_inequality} as a differential inclusion and formulate a corresponding convex integration theorem for the existence of turbulently mixing solutions. That is, we rewrite \eqref{eq:main0g}, \eqref{eq:euler_0g_local_energy_inequality} as the linear system 
\begin{align}\label{eq:linear_system_with_p}
\begin{split}
\partial_t m+\divv \sigma +\nabla \left(\frac{2}{n}e+q\right)&=0,\\
\divv w&=0,\\
\partial_t \mu+\divv m &=0,\\
\partial_t e +\divv h &=\nu,
\end{split}
\end{align}
coupled with the following set of nonlinear pointwise constraints:
\begin{align}\label{eq:pointwise_constraints_when_introducing_diff_inclusion}
    m=\mu w,\quad h=(e+q)w,\quad \mu w\otimes w=\sigma +\frac{2}{n} e \id, \quad \mu\in\{\rho_-,\rho_+\}.
\end{align}
Here the tuple of functions $z=(\mu,w,m,h,\sigma,e)$ defined on $\mathscr{D}$ takes values in the space $Z:=\R\times\R^n\times\R^n\times\R^n\times\cS_0^{n\times n}\times\R$, where $\cS_0^{n\times n}$ denotes the space of traceless symmetric $n$-by-$n$ matrices. Note that taking the trace of the matrix equality in \eqref{eq:pointwise_constraints_when_introducing_diff_inclusion} implies
\[
e=\frac{1}{2}\mu\abs{w}^2.
\]
Moreover, $q:\mathscr{D}\rightarrow\R$ is the pressure function and $\nu$ is a measure that we require to be negative in the sense that $\nu[\varphi]\leq 0$ for all $\varphi\in\cC^\infty_c(\cup_{t\in[0,T)}\overline{\mathscr{D}(t)}\times\{t\})$.

It is easy to check that if $z$ solves \eqref{eq:linear_system_with_p}, \eqref{eq:pointwise_constraints_when_introducing_diff_inclusion} then $(\mu,w)$ solves \eqref{eq:main0g}, \eqref{eq:euler_0g_local_energy_inequality} with pressure $q$, dissipation measure $\nu$ and $\mu\in\set{\rho_-,\rho_+}$. Note that we have manually added the latter condition as a consistency requirement between the transport equation and the initial data of our interest which has two-phase type, cf. \eqref{eq:flat_initial_data}. Of course having a solution to \eqref{eq:main0g}, \eqref{eq:euler_0g_local_energy_inequality} with this additional property, one can easily write up a tuple $z$, as well as $q$ and $\nu$ for which \eqref{eq:linear_system_with_p}, \eqref{eq:pointwise_constraints_when_introducing_diff_inclusion} hold true. 

Also note that \eqref{eq:boundary_condition_w} implies the boundary conditions
\begin{align}\label{eq:new_boundary_conditions}
    (m-\mu gte_n)\cdot \vec{n}=0=(h-(e+q)gte_n)\cdot \vec{n},~ \left(\sigma+\frac{2}{n}e\id-m\otimes(gte_n)\right)\vec{n}=0
\end{align}
on $\partial\mathscr{D}(t)$, $t\in(0,T)$, which are contained distributionally in Definition \ref{def:weaksolslin}.

\subsection{Statement of results 1 -- Convex integration}\label{sec:statement_of_results}

For the formulation of our convex integration theorem we also define the open set $Z_0:=\set{z\in Z:\mu\in(\rho_-,\rho_+)}$ and $\tilde M:Z_0\rightarrow \cS^{n\times n}$,
\begin{align}\label{eq:matrix}
\tilde M(z):=\frac{\mu\rho_-\rho_+w\otimes w-\rho_-\rho_+(m\otimes w+w\otimes m)+(\rho_++\rho_--\mu)m\otimes m}{(\rho_+-\mu)(\mu-\rho_-)}-\sigma,
\end{align}
as well as $T_\pm:Z_0\rightarrow \R$,
\begin{align}\label{eq:traces}
    T_\pm (z)=\frac{\rho_\pm}{2}\frac{|m-\rho_\mp w|^2}{(\mu-\rho_\mp)^2}.
\end{align}
\begin{theorem}\label{thm:main_theorem} Let $\Omega$ be a bounded domain or a periodic channel, $\mu_0\in L^\infty(\Omega)$ with $\mu_0\in\set{\rho_-,\rho_+}$ almost everywhere, and $w_0\in L^\infty(\Omega;\R^n)$, $\divv w_0=0$ in $\Omega$, $w_0\cdot \vec{n}=0$ on $\partial \Omega$. Suppose that there exists an $L^\infty$ solution $z=(\mu,w,m,h,\sigma,e)$ of \eqref{eq:linear_system_with_p} with boundary conditions \eqref{eq:boundary_condition_w}, \eqref{eq:new_boundary_conditions}, initial data
\[
\mu(\cdot,0)=\mu_0,\quad m(\cdot,0)=\mu_0w_0,\quad e(\cdot,0)=\frac{1}{2}\mu_0\abs{w_0}^2,
\]
pressure $q$ and negative dissipation measure $\nu\in \left(\cC_c^\infty(\cup_{t\in[0,T)}\overline{\mathscr{D}(t)}\times\{t\})\right)^*$. Suppose further that to this solution there exists an open set $\mathscr{U}\subset\mathscr D$, as well as $\varepsilon>0$, such that $(\mu,w,m,h,\sigma,e)$ is continuous on $\mathscr{U}$, 
\begin{align}\label{eq:Tpm_and_q_bounded}
    q\in L
^\infty(\mathscr{U})\cap \cC^0(\mathscr{U}),\quad T_-(z),T_+(z)\in L^\infty(\mathscr{U}),
\end{align}
 and such that on $\mathscr{U}$ there holds
\begin{gather}\begin{gathered}\label{eq:sufficient_condition_for_Linfty_hull}
\rho_-<\mu<\rho_+,\\
    \lamax(\tilde M (z))+\varepsilon\abs{h-(e+q)w-\frac{T_+(z)-T_-(z)}{\rho_+-\rho_-}(m-\mu w)}< \frac{2}{n}e,\end{gathered}
\end{gather}
whereas \eqref{eq:pointwise_constraints_when_introducing_diff_inclusion} is required to hold almost everywhere in $\mathscr D\setminus\mathscr{U}$.
Then there exist infinitely many weak solutions $(\mu_{sol},w_{sol})\in L^\infty(\mathscr D)$ of \eqref{eq:main0g}, \eqref{eq:euler_0g_local_energy_inequality}, \eqref{eq:boundary_condition_w} with initial data $(\mu_0,w_0)$, pressure $q$ and dissipation measure $\nu$, i.e. there holds $\mu_{sol}\in\{\rho_-,\rho_+\}$ almost everywhere and
\begin{align*}
    \partial_t(\mu_{sol} w_{sol})+\text{div }(\mu_{sol} w_{sol}\otimes w_{sol}) + \nabla q &= 0,\\
\text{div } w_{sol}&=0,\\
\partial_t \mu_{sol} + \text{div }(\mu_{sol} w_{sol})&=0,\\
\partial_t \left(\frac{1}{2}\mu_{sol}|w_{sol}|^2\right)+\divv \left(\left(\frac{1}{2}\mu_{sol}|w_{sol}|^2+q\right)w_{sol}\right)&=\nu.
\end{align*}
On $\mathscr D\setminus \mathscr{U}$ the solutions $(\mu_{sol},w_{sol})$ coincide with $(\mu,w)$. Furthermore, among the infinitely many solutions one can find sequences $(\mu_k,w_k,m_k,h_k,\sigma_k,e_k)$, defined via \eqref{eq:pointwise_constraints_when_introducing_diff_inclusion} such that $(\mu_k,w_k,m_k,h_k,\sigma_k,e_k)\rightharpoonup (\mu,w,m,h,\sigma,e)$ weakly in $L^2(\mathscr D)$ as $k\rightarrow \infty$.
\end{theorem}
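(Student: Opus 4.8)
\textbf{Proof proposal for Theorem \ref{thm:main_theorem}.}

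The plan is to run the standard Tartar framework / Baire category convex integration scheme adapted to the differential inclusion \eqref{eq:linear_system_with_p}, \eqref{eq:pointwise_constraints_when_introducing_diff_inclusion}, exactly as in \cite{GKSz,GK-EE}, with the novelty that the extra scalar equation $\partial_t e+\divv h=\nu$ is now part of the linear system and $(h,e)$ are new state variables. First I would fix the linear operator: the four equations in \eqref{eq:linear_system_with_p} together with the divergence-free condition form a constant-coefficient first-order system $\mathcal{L}z=0$ (with the pressure $q$ and the measure $\nu$ playing the role of given right-hand sides, treated by the usual trick of absorbing $q$ into the symmetric part and handling $\nu$ as a prescribed forcing on the affine subspace), and I would compute its wave cone $\Lambda$ — the set of amplitudes $\bar z\in Z$ for which there is a direction $\xi\in\R^{n+1}$ with $\mathcal{L}(\bar z\,\hat\phi(\xi))=0$ for plane waves. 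Here one must verify that $\Lambda$ is large enough: it should contain, for the relevant localized perturbations, amplitudes that move $\sigma$, $m$, $w$, $h$, $e$ while keeping $\mu$ fixed (so that the phase constraint $\mu\in\{\rho_-,\rho_+\}$, resp. $\rho_-<\mu<\rho_+$ on $\mathscr U$, is not disturbed) — this is the analogue of the localized plane-wave constructions in Székelyhidi's work and should go through since $\mu$ is decoupled in the principal symbol along suitable directions.

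Next I would identify the constraint set $K\subset Z$ cut out by \eqref{eq:pointwise_constraints_when_introducing_diff_inclusion} (for fixed pressure $q$, which enters parametrically) and compute, or rather lower-bound, its $\Lambda$-convex hull. The key algebraic input is that the quantitative open condition \eqref{eq:sufficient_condition_for_Linfty_hull} on $\mathscr U$ — involving $\lamax(\tilde M(z))<\tfrac2n e$ and the $\varepsilon$-weighted correction term with $T_\pm(z)$ and $h-(e+q)w$ — is precisely the condition describing (a subset of) the interior of the $L^\infty$-relaxed hull $K^{co}_{\rho_-,\rho_+}$ of the inclusion; this is where the matrix $\tilde M$ and the traces $T_\pm$ from \eqref{eq:matrix}, \eqref{eq:traces} were engineered. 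I would state this as a separate lemma ("$\mathscr U$ lies in the interior of the appropriate hull, with a uniform gap"), whose proof is a finite-dimensional computation: solve the constraint $\mu w\otimes w=\sigma+\tfrac2n e\,\id$ under $\mu\in\{\rho_-,\rho_+\}$, observe that the two phases give two points whose segment has a $\Lambda$-direction, and read off that being strictly inside means the largest eigenvalue of the associated "error matrix'' $\tilde M$ plus the energy-flux mismatch term is strictly below $\tfrac2n e$. The weak $L^2$ convergence $(\mu_k,\dots,e_k)\rightharpoonup(\mu,\dots,e)$ then comes for free from the oscillatory nature of the added perturbations, which have zero weak limit.

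With the linear system, the wave cone, and the hull in hand, the convex integration engine is routine: define the space $X$ of subsolutions as the $L^\infty$-closure (in an appropriate weak-* metrizable topology on bounded sets) of $z$'s that solve the linear system with the given boundary and initial data, coincide with the prescribed $(\mu,w,\dots)$ on $\mathscr D\setminus\mathscr U$, satisfy the strict inequality \eqref{eq:sufficient_condition_for_Linfty_hull} on $\mathscr U$, and equal the subsolution on $\mathscr U$ only approximately; show $X$ is nonempty (the given $z$ itself works) and that the "energy gap'' functional $z\mapsto \int_{\mathscr U}\big(\tfrac2n e-\lamax(\tilde M(z))-\dots\big)$ is a Baire-$1$ function on $X$, hence has a residual set of continuity points; then a perturbation lemma — built from the localized plane waves in $\Lambda$, glued via a partition of unity and truncated so as not to touch the boundary or the region where $\mu$ must stay $\rho_\pm$ — shows that at any point where the gap is positive one can add a perturbation of controlled $L^2$-norm from below, so continuity points must have zero gap, i.e. satisfy the pointwise constraints a.e. and thus are genuine solutions; infinitely many arise by iterating on disjoint sub-boxes or by the residuality itself. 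The main obstacle, and the part deserving the most care, is the perturbation lemma on the curved, time-dependent accelerated domain $\mathscr D$: one must produce admissible localized oscillations that (i) lie in the wave cone of the full five-equation system including the energy equation with its measure-valued right-hand side $\nu$, (ii) respect the inhomogeneous, moving boundary conditions \eqref{eq:new_boundary_conditions} and the no-penetration condition \eqref{eq:boundary_condition_w}, and (iii) keep $q$ fixed and $\mu$ pinned — handling the interaction between the prescribed $\nu$ and the oscillations (so that $\nu$ is genuinely unchanged, not merely up to error) is the subtle new point compared with \cite{GKSz}, and I would resolve it by choosing perturbations that leave $e$ and $h$ correlated exactly as in the constraint $h=(e+q)w$ up to a curl, so that $\partial_t e+\divv h$ is untouched.
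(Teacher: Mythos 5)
Your outline correctly identifies the overall architecture — set up the linear operator $\cA$, compute the wave cone $\Lambda$, establish a big enough $\Lambda$-convex hull, then run a Baire category argument with localized plane-wave perturbations that leave $\nu$ and $q$ untouched — and this is indeed the route the paper takes (reducing to the Tartar framework of \cite{Sz_ipm}, with the Crippa--Gusev--Spirito--Wiedemann adaptation for the $(y,t)$-dependent constraint set coming from the continuous pressure). However, there is one genuine gap and one smaller misconception that would prevent your proposal from closing.

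The genuine gap is the absence of the $\gamma$-truncation. The constraint set $K_{(y,t)}$ cut out by \eqref{eq:pointwise_constraints_when_introducing_diff_inclusion} is \emph{unbounded}: the energy density $e=\tfrac12\mu|w|^2$ and hence $h$, $\sigma$ may be arbitrarily large. The Baire category machinery (compactness of the target, weak-* metrizability on bounded sets, the perturbation lemma pushing up an $L^2$-gap functional) requires a compact target $K_\gamma=K\cap\{e\leq\gamma\}$. The strict open condition \eqref{eq:sufficient_condition_for_Linfty_hull} only directly describes a subset of the interior of the \emph{unbounded} hull $\overline U$ (Proposition \ref{prop:hull}); it is not automatic, and it is a substantial portion of the paper's Section \ref{sec:convex_hull_unconstraint}, to show that \eqref{eq:sufficient_condition_for_Linfty_hull} together with $T_\pm(z),e,q\in L^\infty(\mathscr U)$ actually places $z(y,t)$ in the interior of $K^{co}_{\gamma,(y,t)}$ for a \emph{uniform} $\gamma$ depending on $\varepsilon$ and the $L^\infty$ bounds. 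This is what Lemma \ref{lem:dis} (where the explicit $\alpha_\gamma(z)$ is computed), Corollary \ref{cor:condition_for_being_in_K_gamma_hull} (giving the map $\gamma_\varepsilon$), Proposition \ref{prop:condition_for_being_in_K_gamma_hull2}, and Remark \ref{rem:simpler_gamma} establish. Without this step you cannot invoke any Baire category theorem, so calling this a "finite-dimensional computation" and a "separate lemma" understates where the real work of the proof is. The paper's actual proof of Theorem \ref{thm:main_theorem} consists \emph{only} of picking $\gamma$ via $\gamma_\varepsilon$ and handing the ball to the abstract Theorem \ref{thm:abstract_CI_theorem}.

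The smaller misconception concerns keeping $\nu$ fixed. You propose to arrange that "$e$ and $h$ remain correlated exactly as in $h=(e+q)w$ up to a curl." This is not what is needed and is not what the paper does: a change of $h$ by a curl field leaves $\divv h$ fixed, but nothing you said controls $\partial_t e$, so $\partial_t e+\divv h$ would generally change. The correct (and simpler) requirement is that the localized perturbation $\bar z$ solves the \emph{homogeneous} linear system $\cA[\bar z]=0$, including $\partial_t\bar e+\divv\bar h=0$; this is exactly what Lemma \ref{lem:locpw} builds via potentials. Then $\cA[z+\bar z]=b$ automatically, so both $q$ and $\nu$ are literally unchanged. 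Your instinct that the perturbations must not disturb $\nu$ is right, but the mechanism you propose would not deliver it.
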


\begin{definition}
Any tuple $z$ as in Theorem \ref{thm:main_theorem} is called a subsolution and $\mathscr{U}$ the associated mixing zone, cf. also Definition \ref{def:weaksolslin}.
\end{definition}
As it is common in convex integration, further attributes such as for instance the ``turbulent mixing at every time slice'' property  (c.f. e.g. \cite{Castro_Faraco_Mengual_degraded}) can be added to the solutions $(\mu_{sol},w_{sol})$. Regardless, Theorem \ref{thm:main_theorem} shows that the subsolution determines the crucial features of the induced solutions such as the extent of the mixing zone, but also the local dissipation measure $\nu$ and the pressure $q$. For instance, concerning the total energy of the induced solutions there holds
\begin{align}\label{eq:total_energy_of_sols}
    E(t)=\int_{\Omega+\frac{1}{2}gt^2}e(y,t)-m_n(y,t)gt+\mu(y,t) gy_n\:dy,~t\in[0,T),
\end{align}
provided that $\nu\in L^1(\Omega\times(0,T))$. This relation is stated and proven in the appendix, see Lemma \ref{lem:total_energy}.

Conditions \eqref{eq:Tpm_and_q_bounded}, \eqref{eq:sufficient_condition_for_Linfty_hull} are sufficient conditions for inducing weak solutions via convex integration in the Tartar framework, or in other words sufficient conditions for being in the interior of the convex hull of a compact subset of the set defined through the constraints \eqref{eq:pointwise_constraints_when_introducing_diff_inclusion}. In the other direction we have the following statement.
\begin{proposition}\label{prop:weak_limits}
Suppose $(\mu_k,w_k)\in L^\infty(\mathscr{D})\times L^3(\mathscr{D};\R^n)$, $k\in\N$ is a sequence
of solutions to \eqref{eq:main0g}, \eqref{eq:euler_0g_local_energy_inequality} with pressures $q_k$, dissipation measures $\nu_k$ and $\mu_k\in\set{\mu_-,\mu_+}$ pointwise almost everywhere. Define the associated tuple $z_k$ via \eqref{eq:pointwise_constraints_when_introducing_diff_inclusion}. If $z_k$ and $q_k$ converge weakly in $L^1(\mathscr{D})$ to an integrable tuple $\bar{z}$ and $\bar{q}\in L^1(\mathscr{D})$, and if $\nu_k$ converges weakly to another measure $\bar{\nu}$, then $\bar{z}$ solves \eqref{eq:linear_system_with_p} with pressure $\bar{q}$, and dissipation $\bar{\nu}$. Moreover, $\bar{z}$ takes pointwise almost everywhere values in the closure of the set specified through \eqref{eq:sufficient_condition_for_Linfty_hull} for parameter $\varepsilon=0$. If $\norm{w_k}_{L^\infty(\mathscr{D};\R^2)}$ is uniformly bounded along the sequence, then we in addition obtain $T_-(\bar{z}),T_+(\bar{z})\in L^\infty(\mathscr{D})$ with 
\begin{align*}
    \norm{T_\pm(\bar{z})}_{L^\infty(\mathscr{D})}\leq \frac{\rho_+}{2}\sup_k\norm{w_k}_{L^\infty(\mathscr{D};\R^2)}^2.
\end{align*}
\end{proposition}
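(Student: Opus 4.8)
The plan is to verify the three conclusions of Proposition~\ref{prop:weak_limits} in turn: linearity of the system, the inclusion into the closed constraint set, and the uniform bound on $T_\pm$. First I would address the easiest part: since \eqref{eq:linear_system_with_p} is linear in the tuple $z_k$ and the pressure $q_k$ (the pointwise constraints \eqref{eq:pointwise_constraints_when_introducing_diff_inclusion} having been used only to \emph{define} $z_k$ from $(\mu_k,w_k)$, and being precisely what makes the underlying nonlinear equations equivalent to the linear system), weak $L^1$-convergence of $z_k\rightharpoonup\bar z$, $q_k\rightharpoonup\bar q$ passes to the limit in every term; the dissipation equation $\partial_t e_k+\divv h_k=\nu_k$ passes to the limit because $\nu_k\rightharpoonup\bar\nu$ by hypothesis. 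So $\bar z$ solves \eqref{eq:linear_system_with_p} with pressure $\bar q$ and dissipation $\bar\nu$, distributionally, including the boundary conditions \eqref{eq:new_boundary_conditions} which are linear constraints as well.

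Next I would establish the pointwise inclusion. Each $z_k$ satisfies the \emph{pointwise} algebraic relations \eqref{eq:pointwise_constraints_when_introducing_diff_inclusion}; in particular $\mu_k\in\{\rho_-,\rho_+\}$ and $\mu_kw_k\otimes w_k=\sigma_k+\tfrac2n e_k\,\mathrm{id}$. The key observation is that the conditions appearing in \eqref{eq:sufficient_condition_for_Linfty_hull} with $\varepsilon=0$, namely $\rho_-\le\mu\le\rho_+$ together with $\lamax(\tilde M(z))\le\tfrac2n e$ and $h=(e+q)w+\tfrac{T_+(z)-T_-(z)}{\rho_+-\rho_-}(m-\mu w)$, define a \emph{closed convex} set in $Z\times\R$ (the variable $q$ included) that \emph{contains the graph} $K$ of the constraints \eqref{eq:pointwise_constraints_when_introducing_diff_inclusion}: indeed one checks, for $\mu=\rho_\pm$, that $\tilde M$ degenerates so that $\lamax(\tilde M)=\tfrac2n e$ holds with equality and the $h$-relation is exactly $h=(e+q)w$. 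This is the content of the $\Lambda$-convex hull computation underlying Theorem~\ref{thm:main_theorem}; I would either cite that computation or redo the short linear-algebra verification that $K$ lies in this closed convex set. Since a weak $L^1$-limit of functions taking values in a closed convex set again takes values in that set a.e. (lower semicontinuity of convex integral functionals, or a separating-hyperplane/Mazur argument), $\bar z$ and $\bar q$ satisfy \eqref{eq:sufficient_condition_for_Linfty_hull} with $\varepsilon=0$ pointwise a.e.

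Finally, for the bound on $T_\pm(\bar z)$: if $\|w_k\|_{L^\infty}\le C:=\sup_k\|w_k\|_{L^\infty}$, then on the set $\{\mu_k=\rho_\mp\}$ one has $m_k-\rho_\mp w_k=\rho_\mp w_k-\rho_\mp w_k=0$ while on $\{\mu_k=\rho_\pm\}$ one has $m_k-\rho_\mp w_k=(\rho_\pm-\rho_\mp)w_k$, so in either case $T_\pm(z_k)=\tfrac{\rho_\pm}{2}\,\mathbbm{1}_{\{\mu_k=\rho_\pm\}}|w_k|^2\le\tfrac{\rho_\pm}{2}C^2$; passing to the weak limit in the affine identity $h_k=(e_k+q_k)w_k+\tfrac{T_+(z_k)-T_-(z_k)}{\rho_+-\rho_-}(m_k-\mu_kw_k)$ (together with the already-known weak limits of $e_k,q_k,m_k,\mu_k,w_k$) identifies the weak limits of $T_\pm(z_k)$ with the corresponding affine expressions in $\bar z$, which by the same token equal $T_\pm(\bar z)$; the $L^\infty$-bound is preserved under weak-$*$ limits. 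The main obstacle I expect is the second step: making precise that $K$ is contained in the closed convex set cut out by \eqref{eq:sufficient_condition_for_Linfty_hull} at $\varepsilon=0$ -- i.e. correctly handling the degeneration of $\tilde M(z)$ and of $T_\pm(z)$ as $\mu\to\rho_\pm$, where the denominators in \eqref{eq:matrix}, \eqref{eq:traces} vanish -- and verifying that this set is genuinely convex in all the variables jointly so that weak limits stay inside it; once that algebraic/convexity bookkeeping is done, the convergence arguments are standard.
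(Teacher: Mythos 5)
Your first two steps are largely correct, but you misread the set cut out by \eqref{eq:sufficient_condition_for_Linfty_hull} at $\varepsilon=0$: setting $\varepsilon=0$ deletes the \emph{entire} term involving $h$, so the set is simply $U=\{z:\mu\in(\rho_-,\rho_+),\ \lamax(\tilde M(z))<\tfrac{2}{n}e\}$, whose closure is $\overline U$, and which by Proposition~\ref{prop:hull} coincides with the convex hull $K^{co}$. The $h$-relation you insert (the nonstrict condition \eqref{eq:nonstrictsubs}) is not part of \eqref{eq:sufficient_condition_for_Linfty_hull} at $\varepsilon=0$, and including it would in fact destroy convexity, since the coefficient $\tfrac{T_+(z)-T_-(z)}{\rho_+-\rho_-}$ depends nonlinearly on $z$; your Mazur/separating-hyperplane argument would then no longer apply. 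With the correct set $\overline U$, which is convex (this is precisely the content of Proposition~\ref{prop:hull} together with the convexity of $z\mapsto\lamax(\tilde M(z))$ from \cite[Lemma~4.3]{GKSz}) and independent of the pressure, the weak-limit step goes through exactly as you describe and matches the paper.

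The genuine gap is your third step. Along $K$, with $m_k=\mu_kw_k$ one has $m_k-\rho_\mp w_k=(\mu_k-\rho_\mp)w_k$, so the natural continuous extension of $T_\pm$ yields $T_\pm(z_k)=\tfrac{\rho_\pm}{2}|w_k|^2$, not $\tfrac{\rho_\pm}{2}\mathbbm{1}_{\{\mu_k=\rho_\pm\}}|w_k|^2$. More importantly, passing to the weak limit in the affine identity $h_k=(e_k+q_k)w_k+\tfrac{T_+(z_k)-T_-(z_k)}{\rho_+-\rho_-}(m_k-\mu_kw_k)$ cannot identify $T_\pm(\bar z)$: for $z_k\in K$ the factor $m_k-\mu_kw_k$ vanishes identically, so the identity reduces to $h_k=(e_k+q_k)w_k$ and carries no information about $T_\pm(z_k)$; and in any case products such as $(e_k+q_k)w_k$ of weakly convergent $L^1$ sequences do not converge weakly to $(\bar e+\bar q)\bar w$. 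The paper's argument is different and does close the gap: set $c:=\tfrac{\rho_+}{2}\sup_k\|w_k\|_{L^\infty}^2$; then each $z_k(y,t)$ lies in the closure of $\{z\in Z_0:T_\pm(z)\le c\}$, which by \cite[Lemma~4.4]{GKSz} is convex for a \emph{fixed constant} $c$ (the paper explicitly warns that this convexity fails if $c$ is replaced by the variable $e$), so the weak $L^1$ limit $\bar z$ also takes a.e.\ values in that closed convex set, giving $T_\pm(\bar z)\le c$.
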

This proposition is a consequence of the full characterization of the convex hull associated with \eqref{eq:pointwise_constraints_when_introducing_diff_inclusion} in the unbounded case, see Proposition \ref{prop:hull}, and the convexity of sublevelsets $\set{T_\pm\leq e}$ known from \cite[Lemma 4.4]{GKSz}. Note that the latter convexity only holds true when $e$ is considered as a fixed constant and not as a variable like it is done in the present paper.

\subsection{Statement of results 2 -- Subsolutions}\label{sec:statement_of_results_2}

Concerning the construction of subsolutions which induce turbulently mixing solutions by means of Theorem \ref{thm:main_theorem}, we focus on the flat unstable Rayleigh-Taylor configuration \eqref{eq:flat_initial_data} in a periodic channel $\Omega=\T^{n-1}\times (-L,L)$ with $\T$ having periodicity $1$ and $L>0$. In that context it is legitimate to understand subsolutions as horizontally averaged solutions. We therefore focus on subsolutions $z$ with associated pressure $q$ and dissipation measure $\nu$ with
\begin{gather}\label{eq:1Dsub1}
    z(y,t)=z(y_n,t),\quad q(y,t)=q(y_n,t),\quad \nu(y,t)=\nu(y_n,t),\\
    w(y_n,t)=gte_n,\label{eq:1Dsub2}\\
    m_j(y,t)=0,\quad h_j(y,t)=0,~j=1,\ldots,n-1.\label{eq:1Dsub3}
\end{gather}

Note that $w_n(y_n,t)=gt$ in \eqref{eq:1Dsub2} is in fact enforced by \eqref{eq:1Dsub1}, the incompressibility of $w$ and \eqref{eq:boundary_condition_w}. Vanishing of the remaining components, as well as conditions \eqref{eq:1Dsub3}, means that there is no average velocity, momentum, energy flux resp., in horizontal directions.

Moreover, for any subsolution satisfying \eqref{eq:1Dsub1}-\eqref{eq:1Dsub3}, one observes that the matrix $\tilde {M}(z)$ which gives a lower bound on the local energy density $e$ through its maximal eigenvalue, cf. \eqref{eq:sufficient_condition_for_Linfty_hull}, can be written as 
\begin{align}\label{eq:form_of_tilde_M_in_subs}
    \tilde{M}(z)=f(\mu,m_n,t)e_n\otimes e_n-\sigma,
\end{align}
with $f$ given by
\begin{align}\label{eq:f_in_subs}
    f(\mu,m_n,t):=\frac{\mu\rho_-\rho_+g^2t^2-2\rho_-\rho_+ m_n gt+(\rho_++\rho_--\mu)m_n^2}{(\rho_+-\mu)(\mu-\rho_-)}.
\end{align}
Looking at \eqref{eq:form_of_tilde_M_in_subs}, there is a canonical choice for $\sigma(y,t)\in\cS^{n\times n}_0$ turning $\tilde{M}(z)$ into a diagonal matrix and thus minimizing its maximal eigenvalue $\lamax(\tilde{M}(z))$. Depending on the considered dimension $n=2$ or $n=3$, this choice is
\begin{align}\label{eq:choice2}
    \sigma=\frac{1}{2}\begin{pmatrix}
    -f & 0\\
    0 &f
    \end{pmatrix},\quad\sigma=\frac{1}{3}\begin{pmatrix}
    -f & 0 &0\\
    0&-f&0\\
    0&0&2f
    \end{pmatrix}.
\end{align}
Here we focus on subsolutions with this canonical choice of $\sigma$.
\begin{definition}\label{def:1Dsubs} Consider initial data \eqref{eq:flat_initial_data} and $e_0(x)=0$ in the periodic channel.
A subsolution $z$ with initial data $(\rho_0,v_0=0,e_0=0)$, pressure $q$ and dissipation measure $\nu$ is called a one-dimensional subsolution provided properties \eqref{eq:1Dsub1}--\eqref{eq:choice2} hold true.
\end{definition}

There is a subtle difference when considering similar subsolutions in a box $\Omega=(0,1)^{n-1}\times(-L,L)$ instead of the periodic channel. Indeed, conditions \eqref{eq:boundary_condition_w}, \eqref{eq:new_boundary_conditions}, \eqref{eq:1Dsub1} automatically imply \eqref{eq:1Dsub2}, \eqref{eq:1Dsub3}, but also
\begin{align*}
    \sigma=\frac{1}{2}\begin{pmatrix}
    -e & 0\\
    0 &e
    \end{pmatrix},\quad\sigma=\frac{1}{3}\begin{pmatrix}
    -2e & 0 &0\\
    0&-2e&0\\
    0&0&4e
    \end{pmatrix}.
\end{align*}
Thus $\lamax(\tilde{M}(z))\geq \frac{2}{n}e$, which means that \eqref{eq:sufficient_condition_for_Linfty_hull} can at most hold with equality when 
\begin{gather}
    e=\frac{1}{2}f,\quad 
    h=(e+q)w+\frac{T_+(z)-T_-(z)}{\rho_+-\rho_-}(m-\mu w).\label{eq:nonstrictsubs}
\end{gather}
\begin{definition}
A one-dimensional non-strict subsolution is a one-dimensional subsolution except that \eqref{eq:sufficient_condition_for_Linfty_hull} is replaced by \eqref{eq:nonstrictsubs}. 
\end{definition}

Our convex integration theorem does not apply to non-strict subsolutions. Nonetheless, we mainly focus on them, and then show that one can perturb a non-strict subsolution to a strict one when working on $\Omega=\T^{n-1}\times(-L,L)$.

\begin{proposition}\label{prop:subsgen} Let $\Omega=\T^{n-1}\times(-L,L)$ or $\Omega=(0,1)^{n-1}\times(-L,L)$ and let us define 
\[
a(r):=\frac{(\rho_+-r)(r-\rho_-)}{\rho_++\rho_--r},\quad r\in[\rho_-,\rho_+].
\]
For any $\lambda\in(0,1/2]$,
choosing 
\begin{align}
    m=m(t,\mu)= gt (\mu-\lambda a(\mu)) e_n,\label{eq:m_in_subsol}
\end{align}
and taking $\mu$ to be the unique entropy solution of the Riemann problem 
\begin{align*}
    \partial_t\mu+\partial_{y_n}m(t,\mu)=0,\quad \mu(\cdot,0)=\rho_0,
\end{align*}
gives rise to a one-dimensional non-strict subsolution $z$ with pressure $q$ and dissipation measure $\nu$ given by
\begin{align*}
    \nu&=(1-2\lambda)g(m_n-\mu gt)= \lambda(2\lambda-1)g^2 t a(\mu)\leq 0.
\end{align*}
The associated total energy defined through the right-hand side of \eqref{eq:total_energy_of_sols} is given by
\begin{align}\label{eq:total_enerrgy_in_subs_prop}
    E_{tot}(t)=\int_{\Omega}\rho_0(y)gy_n\:dy+\frac{(\rho_+-\rho_-)^3}{12\rho_+\rho_-}\left(\lambda-\frac{1}{2}\right)\lambda^2g^3t^4.
\end{align}
All involved quantities $z$, $q$ and $\nu$ are continuous on all of $\overline{\mathscr{D}}$ except $\mu$ at $\set{x_n=t=0}$.
\end{proposition}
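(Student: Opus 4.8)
The plan is to verify directly that the stated choice of $(\mu,m,\sigma,e,h,q,\nu)$ satisfies Definition~\ref{def:1Dsubs} of a one-dimensional non-strict subsolution, and then to compute the induced dissipation measure and total energy. First I would unpack what the definition demands: properties \eqref{eq:1Dsub1}--\eqref{eq:1Dsub3} are built in by taking $w=gte_n$, $m=m_n e_n$ with $m_n=gt(\mu-\lambda a(\mu))$, $h=h_n e_n$, and declaring everything to depend only on $(y_n,t)$; property \eqref{eq:choice2} fixes $\sigma$ from $f$; and \eqref{eq:nonstrictsubs} fixes $e=\tfrac12 f(\mu,m_n,t)$ and the horizontal structure of $h$. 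So the genuine content is (i) that the linear system \eqref{eq:linear_system_with_p} holds in the distributional sense with the claimed $q$ and $\nu$, (ii) that the constraints \eqref{eq:nonstrictsubs} are consistent, i.e. $e=\tfrac12 f\ge 0$ and the resulting $h$ makes the energy equation close, (iii) that the boundary conditions \eqref{eq:boundary_condition_w}, \eqref{eq:new_boundary_conditions} hold, and (iv) the two explicit computations for $\nu$ and $E_{tot}$.

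For step (i): the incompressibility $\divv w=0$ is immediate since $w=gte_n$ is spatially constant; the continuity equation $\partial_t\mu+\partial_{y_n}m_n=0$ is exactly the conservation law defining $\mu$, so by construction $\mu$ (the entropy solution of the Riemann problem) solves it weakly, and I would invoke the standard theory that the Riemann problem for a uniformly convex flux $r\mapsto gt(r-\lambda a(r))$ — note $a$ is concave on $[\rho_-,\rho_+]$, hence $r-\lambda a(r)$ is convex for $\lambda\le 1/2$, actually strictly convex, with the time-dependent factor $gt$ producing the self-similar-in-$x/t^2$ profile — has a unique entropy solution. The momentum equation then \emph{defines} the pressure gradient: we need $\partial_t m + \partial_{y_n}\sigma_{nn} + \partial_{y_n}(\tfrac2n e + q)=0$, and since $\sigma_{nn}=\tfrac{n-1}{n}f$ while $\tfrac2n e=\tfrac1n f$, the sum $\sigma_{nn}+\tfrac2n e = f$, so $\partial_{y_n}q = -\partial_t m_n - \partial_{y_n}f$; because $m_n=\mu w_n - \lambda gt\,a(\mu)$ and one can use the PDE for $\mu$ to rewrite $\partial_t m_n$, this determines $q$ up to a function of $t$, and I would check it lands in the right regularity class ($q$ continuous on $\overline{\mathscr D}$ away from the origin, matching the continuity of $\mu$). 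Finally the energy equation $\partial_t e + \partial_{y_n}h_n = \nu$ defines $h_n$ together with $\nu$: plugging in $e=\tfrac12 f$ and the prescribed $h_n=(e+q)w_n + \tfrac{T_+-T_-}{\rho_+-\rho_-}(m_n-\mu w_n)$ from \eqref{eq:nonstrictsubs}, the left side becomes a computable expression, and the claim is that it equals $(1-2\lambda)g(m_n-\mu gt)$; since $m_n-\mu gt = -\lambda gt\,a(\mu)$, this is $\lambda(2\lambda-1)g^2 t\, a(\mu)$, which is $\le 0$ because $\lambda\le 1/2$ and $a(\mu)\ge 0$ on $[\rho_-,\rho_+]$. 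This identity is the core algebraic computation and I expect it to follow from careful substitution using the relation between $T_\pm$ and $f$ in the one-phase-free region.

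For step (iii), the boundary conditions: on $\partial\mathscr D(t)=\T^{n-1}\times\{\pm L\}+\tfrac12 gt^2 e_n$ with $\vec n=\pm e_n$, the condition \eqref{eq:boundary_condition_w} reads $(w-gte_n)\cdot e_n=0$, which holds identically; the conditions \eqref{eq:new_boundary_conditions} similarly reduce, using $m=m_n e_n$, $h=h_n e_n$, $w=gte_n$, to scalar identities that hold because near the boundary $\mu$ has reached its constant value $\rho_\pm$, where $a(\rho_\pm)=0$, so $m_n=\rho_\pm gt$, $e=\tfrac12\rho_\pm g^2t^2$, and everything collapses to the no-penetration relation for the pure phases — this is where the choice of $T$ in Remark~(b), guaranteeing the mixing zone stays away from $\{\pm L\}$, gets used. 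For step (iv), the energy computation: substitute $e=\tfrac12 f$, $m_n=gt(\mu-\lambda a(\mu))$ into \eqref{eq:total_energy_of_sols}, getting $E_{tot}(t)=\int_\Omega \rho_0 gy_n\,dy + \int (\text{correction})\,dy$; using the explicit self-similar profile \eqref{eq:rho_bar_explicit} for $\mu=\bar\rho_\lambda$ (change of variables $y_n = \tfrac{\lambda Agt^2}{2}\xi$, which introduces a $t^2$ Jacobian and turns the $y_n$-independent-looking integrand pieces, each carrying a $t^2$, into an overall $t^4$) the integral reduces to a one-dimensional $\xi$-integral over $(\tfrac{-2}{1+A},\tfrac{2}{1-A})$ of an explicit algebraic function, which should evaluate to $\tfrac{(\rho_+-\rho_-)^3}{12\rho_+\rho_-}(\lambda-\tfrac12)\lambda^2$ after simplification (noting $(\rho_+-\rho_-)^3/(\rho_+\rho_-) = (\rho_++\rho_-)^3 A^2(1-?)$-type identities relating $A$ and the densities). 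I would do this in the normalized variable $\bar s_\lambda$ to keep it clean.

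The main obstacle I anticipate is step (ii)/(i) — the verification that the prescribed $h_n$ and $e=\tfrac12 f$ are mutually consistent with $\partial_t e + \partial_{y_n}h_n=\nu$ yielding exactly $\nu=(1-2\lambda)g(m_n-\mu gt)$. This requires expressing $T_\pm(z)$ explicitly when $w=gte_n$ and $m_n=gt(\mu-\lambda a(\mu))$ (so $m_n-\rho_\mp gt = gt((\mu-\rho_\mp)-\lambda a(\mu))$), simplifying $T_+-T_-$, and then differentiating a product of functions of $\mu$ (which itself solves a conservation law, so spatial and temporal derivatives are linked but $\mu$ is only $BV$, so the computation must be read distributionally, with the Rankine--Hugoniot condition handling the shock in $\bar\rho_\lambda$). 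The potential pitfall is that naive pointwise manipulation could miss a distributional defect at the shock; I would therefore either (a) check that the claimed $\nu$, which is an $L^1$ function, together with $e$ and $h_n$ satisfies the weak formulation by testing against $\varphi\in\cC_c^\infty$ and integrating by parts, using that $\mu=\bar\rho_\lambda$ is an entropy solution to control the shock contribution, or (b) verify the identity on the smooth part of the profile and separately check Rankine--Hugoniot compatibility across the single shock. The rest — incompressibility, the continuity equation, the boundary conditions, the regularity claim, and the energy integral — I expect to be routine given the earlier setup and the explicit formula \eqref{eq:rho_bar_explicit}.
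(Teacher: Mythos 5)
Your plan is essentially the same as the paper's: verify the linear system and constraints by construction, define $q$ from the momentum balance and $\nu$ from the energy balance, then carry out the explicit computations of $\nu$ and $E_{tot}$. The energy computation in the paper actually uses a cleaner change of variables than the one you propose: rather than the $\xi$-variable from the explicit profile \eqref{eq:rho_bar_explicit}, it substitutes $x_n=\tfrac{gt^2}{2}F'(r)$ directly (equivalently $r=\rho(x_n,t)$), turning the spatial integral into an integral over $r\in[\rho_-,\rho_+]$ against $F''(r)\,dr$ and producing \eqref{eq:expression_E_kin_star}, \eqref{eq:expression_E_pot} without ever invoking the normalized formula. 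Both routes work; the $r$-substitution is merely tidier.

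One point in your reasoning is based on a misconception worth correcting. You repeatedly refer to ``the single shock'' in $\bar\rho_\lambda$ and worry about distributional defects and Rankine--Hugoniot compatibility across it. There is no shock. The flux $\mu\mapsto\mu-\lambda a(\mu)$ (equivalently $F_\lambda=-\lambda a$) is uniformly convex, and the initial data jumps from the smaller value $\rho_-$ below to the larger value $\rho_+$ above; for a convex flux this ordering of states yields a pure rarefaction wave, explicitly given in \eqref{eq:entropy_solution_rho}. Thus $\mu$ is continuous for $t>0$, smooth inside the mixing zone, and the only singular point is $(y_n,t)=(0,0)$. This is exactly why the paper can carry out the computation of $\nu$ in Lemma \ref{lem:computing_nu} purely pointwise inside the mixing zone (using the inverse-function rule for $\mu=(\tilde F'_\lambda)^{-1}(2y_n/gt^2)$), then match continuously with $\nu=0$ outside — no shock-matching condition is needed. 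Your proposed alternative (b), ``verify on the smooth part and check Rankine--Hugoniot across the shock,'' would have nothing to check at the boundary of the fan beyond continuity of the fluxes, which holds since $a(\rho_\pm)=0$; your caution about ``naive pointwise manipulation'' missing a defect at the shock is unnecessary. Also, a small side note: your parenthetical claiming convexity of $r-\lambda a(r)$ ``for $\lambda\leq 1/2$'' is a spurious restriction — since $a$ is concave with $-a''>0$, the flux is strictly convex for every $\lambda>0$; the constraint $\lambda\leq 1/2$ enters only to ensure $\nu\leq 0$.

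With that corrected, what remains to turn your sketch into a full proof is the algebraic heart of the matter — the identity $\partial_t e+\partial_{y_n}h_n=(1-2\lambda)g(m_n-\mu gt)$ — which you correctly identify as the main work. The paper carries this out by rewriting $f$ as \eqref{eq:f_in_computation_lemma}, computing $T_\pm$ explicitly in \eqref{eq:Tpm_in_computation_lemma}, using the relation \eqref{eq:derivative_e_and_q} to eliminate $\partial_{y_n}(e+q)$, and then inserting the explicit inverse-function derivatives $\partial_t\mu$, $\partial_{y_n}\mu$ inside the rarefaction fan; the identity \eqref{eq:expression_y} (which is just the definition of the fan) makes several terms cancel. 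Your plan is compatible with that, but you should expect the calculation to be lengthy and to rely heavily on the self-similar structure rather than only on the PDE for $\mu$.
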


As indicated in Proposition \ref{prop:lambda_1_3}, the parameter $\lambda =\frac{1}{3}$ is special. We give the following more detailed version of said proposition.
\begin{proposition}\label{prop:energies_of_subsolution_sec2}
Let $z$ be a one-dimensional potentially non-strict subsolution with $m_n$ given through a constitutive relation of the form 
\begin{align}\label{eq:general_m_with_F}
    m_n=gt(\mu +F(\mu))
\end{align}
for some uniformly convex $F\in\cC^2([\rho_-,\rho_+])$ vanishing at the endpoints $\rho_-,\rho_+$. If $\mu$ is taken as the unique entropy solution of the induced Riemann problem \eqref{eq:flat_initial_data}, \eqref{eq:linear_system_with_p}, then the total energy balance satisfies
\begin{align*}
    \frac{4}{g^3t^4}(E_{tot}(0)-E_{tot}(t))\leq  \int_{\rho_-}^{\rho_+}\frac{d}{dr}\left(a(r)^{-1}F(r)^2\right)F'(r)+\frac{1}{2}F'(r)^2\:dr=:\cI(F).
\end{align*}
This inequality becomes an equality for non-strict subsolutions. Moreover, there holds
\begin{align*}
    \cI(F)\leq \cI(-a/3)=\frac{(\rho_+-\rho_-)^3}{162\rho_+\rho_-}
\end{align*}
for all such $F$.
\end{proposition}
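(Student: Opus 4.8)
The plan is to split Proposition~\ref{prop:energies_of_subsolution_sec2} into two parts: first establish the energy balance formula and then carry out the variational optimization over $F$. For the first part I would start from the representation \eqref{eq:total_energy_of_sols} of the total energy of the induced solution (or subsolution), i.e. $E_{tot}(t)=\int_\Omega e(y,t)-m_n(y,t)gt+\mu(y,t)gy_n\,dy$ after undoing the shift $\Omega+\tfrac12 gt^2e_n\mapsto\Omega$. With $e=\tfrac12 f(\mu,m_n,t)$ (the non-strict constraint \eqref{eq:nonstrictsubs}) and $m_n=gt(\mu+F(\mu))$, one computes $f$ in closed form; using $F(\mu)=-\lambda a(\mu)$ as the model case and keeping a general $F$, the integrand $e-m_ngt$ becomes $\tfrac12 g^2t^2$ times an explicit algebraic expression in $\mu$ and $F(\mu)$. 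A short manipulation — essentially completing the square in $m_n$ inside $f$ — should show $e - m_n gt = \tfrac12 g^2 t^2\big(a(\mu)^{-1}F(\mu)^2\big) + (\text{terms that integrate to something $F$-independent or cancel under the transport dynamics})$; more precisely I expect the extra $t$-dependence enters only through $\mu(\cdot,t)$, whose spatial distribution is governed by the conservation law. Differentiating $E_{tot}$ in $t$ and using the PDE $\partial_t\mu+\partial_{y_n}m_n=0$ together with integration by parts in $y_n$, the bulk term collapses and one is left with the contribution of the two constant states plus the self-similar structure $\mu(y_n,t)=\bar\mu(y_n/(gt^2))$; rescaling $y_n=gt^2\xi$ converts the $y_n$-integral into a $t$-independent integral in $\xi$ times $g^3t^4$, which is exactly the prefactor $\tfrac{g^3t^4}{4}$ appearing in the claim. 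Evaluating that $\xi$-integral by the substitution $\xi\mapsto\mu$ (valid where $\mu$ is the rarefaction part of the entropy solution, the shock parts contributing in the expected Rankine–Hugoniot way) produces $\cI(F)=\int_{\rho_-}^{\rho_+}\frac{d}{dr}\!\big(a(r)^{-1}F(r)^2\big)F'(r)+\tfrac12 F'(r)^2\,dr$. For strict subsolutions $e>\tfrac12 f$ only as an inequality, hence $E_{tot}(0)-E_{tot}(t)\le \tfrac{g^3t^4}{4}\cI(F)$, with equality in the non-strict case — this is where the inequality versus equality dichotomy in the statement comes from.

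For the second part I would treat $\cI$ as a functional on the affine-type class $\mathcal{C}:=\{F\in\cC^2([\rho_-,\rho_+]):F\text{ uniformly convex},\ F(\rho_\pm)=0\}$ and show $\cI(F)\le\cI(-a/3)$. The natural move is to write a general $F=-a/3+\varphi$ with $\varphi(\rho_\pm)=0$, expand $\cI$ to second order in $\varphi$, and check that the first variation at $F_0:=-a/3$ vanishes on all admissible $\varphi$ while the second variation is $\le 0$. Concretely, integrating the first term of $\cI$ by parts (the boundary terms die since $F(\rho_\pm)=0$ and $a(\rho_\pm)=0$, though one must be careful that $a^{-1}F^2$ stays bounded — it does, since $F$ vanishes to first order at the endpoints exactly like $a$) rewrites $\cI(F)$ as $\int_{\rho_-}^{\rho_+}\big(\tfrac12 F'^2 - a^{-1}F^2 F''\big)\,dr$ or some equivalent symmetric form; then the Euler–Lagrange equation for a critical point is a second-order ODE that $F_0=-a/3$ should satisfy, and the quadratic form in $\varphi$ coming from the remainder should be manifestly sign-definite (negative) because $\cI$ is, up to sign, a concave quadratic functional once restricted to the linear constraint space. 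Finally I would just plug $F_0=-a/3$ into $\cI$ and verify $\cI(-a/3)=\frac{(\rho_+-\rho_-)^3}{162\rho_+\rho_-}$, which also pins down the earlier claim \eqref{eq:total_enerrgy_in_subs_prop} with $\lambda=1/3$ since $\frac{(\rho_+-\rho_-)^3}{12\rho_+\rho_-}\cdot\frac13\cdot\frac19=\frac{(\rho_+-\rho_-)^3}{324\rho_+\rho_-}$ — wait, a factor-of-two from the $\tfrac{g^3t^4}{4}$ vs $\tfrac{g^3t^4}{12}\lambda^2(\lambda-\tfrac12)$ normalization needs to be tracked, and consistency of these two formulas is itself a useful sanity check on the algebra.

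The main obstacle I anticipate is the bookkeeping in the first part: correctly accounting for the shock(s) in the entropy solution of the Riemann problem when converting $\int_\Omega e - m_n gt\,dy$ into the one-variable integral $\cI(F)$. For $F=-\lambda a$ with $\lambda\le 1/2$ the flux $r\mapsto r+F(r)=r-\lambda a(r)$ is such that $F_\lambda$ in \eqref{eq:F_in_thm_in_introduction} is uniformly convex, so the entropy solution consists of a rarefaction and/or a shock whose precise structure depends on the sign of the jump in $\rho_0$; since $\rho_0$ jumps \emph{down} from $\rho_+$ to $\rho_-$ as $x_n$ increases and the flux is convex, one gets a single rarefaction fan connecting the two states, which is the clean case making the substitution $\xi\mapsto\mu$ a genuine change of variables with no shock corrections — I should state this explicitly. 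The convexity assumption on $F$ is exactly what guarantees this, so the hypothesis is used in an essential way and not merely for convenience. A secondary subtlety is justifying the integration by parts in $r$ near the endpoints where $a$ vanishes: one needs $a^{-1}F^2\to 0$ and $\big(a^{-1}F^2\big)'F'$ integrable, both of which follow from $F(\rho_\pm)=0$ together with $F\in\cC^2$ and $a$ vanishing simply at $\rho_\pm$, but it deserves a line. Everything else — differentiating $E_{tot}$, the self-similar rescaling, the second-variation computation — is routine once the structure is set up.
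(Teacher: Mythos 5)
Your first part is essentially the paper's computation: express the (non-strict) kinetic energy $E_{kin}^*(t)$ via $e=\tfrac12 f$ and $m_n=gt(\mu+F(\mu))$, which collapses to $\tfrac{g^2t^2}{2}\int a(\rho)^{-1}F(\rho)^2\,dx_n$, and then use the self-similar rarefaction $x_n=\tfrac{gt^2}{2}F'(\rho)$ as a change of variables, turning both the kinetic and potential contributions into $t$-independent integrals in $r\in[\rho_-,\rho_+]$ carrying the prefactor $\tfrac{g^3t^4}{4}$. The detour of differentiating $E_{tot}$ in $t$ is unnecessary — the paper computes $E_{tot}(t)$ directly — but it is harmless. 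One small slip in your justification of why there is no shock: the data jumps \emph{up} from $\rho_-$ to $\rho_+$ as $x_n$ increases (not down as you wrote), and it is precisely this upward jump together with convexity of $F$ that forces a pure rarefaction; with a downward jump and convex flux you would get a shock, not a rarefaction. The conclusion you reach is nevertheless correct, and the endpoint-boundary-term argument for the integration by parts in $r$ is sound.

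The second part, however, has a genuine gap. After your integration by parts, $\cI(F)=\int_{\rho_-}^{\rho_+}\tfrac12 (F')^2 - a^{-1}F^2F''\,dr$, and the second term is cubic in $F$, not quadratic, so the assertion that ``$\cI$ is, up to sign, a concave quadratic functional'' is false. A vanishing first variation at $F_0=-a/3$ plus a non-positive second variation would only give local maximality; the cubic remainder is not controlled at all, and concavity of $\cI$ on the class of convex $F$ with $F(\rho_\pm)=0$ is neither obvious nor claimed in the paper. The paper's proof of $\cI(F)\le\cI(-a/3)$ is a genuinely global argument: substitute $H:=-F/a\ge 0$, rewrite
\begin{align*}
\cI(-aH)=\cJ(H):=\int_{\rho_-}^{\rho_+}\tfrac12 a^2(1-4H)(H')^2-\tfrac12 a a'' H^2(1-2H)\,dr,
\end{align*}
and observe that the coefficient $1-4H$ changes sign at $H=1/4$. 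The domain is then split into $\{H>1/4\}$, where $1-4H\le 0$ and the pointwise bound $\sup_{x\ge 0}x^2(1-2x)=1/27$ suffices, and $\{H<1/4\}$, where one exploits convexity of $F$, concavity of $a$, and the monotonicity of $x\mapsto x^2/\alpha + x/2$ on $[-\alpha/4,\infty)$ to compare $\cI_{(r_1,r_2)}(F)$ with $\cI_{(r_1,r_2)}(-a/4)$ on maximal subintervals, tracking boundary terms at the level set $\{H=1/4\}$. None of this is captured by a Taylor expansion at $F_0=-a/3$; before a perturbative argument could even be attempted you would need to establish concavity of $\cI$ on the admissible class, which is the hard part.
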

The ansatz \eqref{eq:general_m_with_F} with $F$ vanishing at the endpoints is motivated by compatibility at the boundary of the mixing zone, since outside, where $\mu\in\set{\rho_\pm}$, one has to have $m=\mu gte_n$.

Finally, we address the need for a strict subsolution in order to carry out convex integration.
\begin{lemma}\label{lem:strict_subs} Let $\Omega=\T^{n-1}\times(-L,L)$ and $z_\lambda$, $\lambda\in(0,1/2)$ be the non-strict subsolution of Proposition \ref{prop:subsgen} with associated quantities $q_\lambda$, $\nu_\lambda$. For all $\delta>0$ sufficiently small depending on $\lambda,\rho_-,\rho_+,g$, there exists a strict subsolution $z^\delta_\lambda$ with 
\begin{gather*}
    \mu^\delta_\lambda=\mu_\lambda,\quad m_\lambda^\delta=m_\lambda,\quad e^\delta_\lambda=e_\lambda+\delta t^2(\rho_+-\mu_\lambda)^2(\mu_\lambda-\rho_-)^2,
\end{gather*}
and dissipation measure $\nu^\delta_\lambda\leq 0$ satisfying
\begin{align*}
    \abs{\nu_\lambda^\delta- \nu_\lambda}\leq C\delta t(\rho_+-\mu_\lambda)(\mu_\lambda-\rho_-)
\end{align*}
for a constant $C=C(\lambda,g,\rho_-,\rho_+)>0$.
The parameter $\varepsilon>0$ in \eqref{eq:sufficient_condition_for_Linfty_hull} can be chosen to be $1$.
\end{lemma}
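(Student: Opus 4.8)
The idea is to take the given non-strict subsolution $z_\lambda$ from Proposition \ref{prop:subsgen}, keep $\mu^\delta_\lambda=\mu_\lambda$ and $m^\delta_\lambda=m_\lambda$ unchanged, and add a small positive perturbation to the energy density $e$ so that the inequality in \eqref{eq:sufficient_condition_for_Linfty_hull} holds strictly. The natural ansatz is $e^\delta_\lambda=e_\lambda+\delta\phi$ with $\phi=t^2(\rho_+-\mu_\lambda)^2(\mu_\lambda-\rho_-)^2$, which vanishes (to high order) at the boundary of the mixing zone and at $t=0$, so no compatibility with the two-phase data or the outer region is disturbed. Because $\sigma$ is tied to $\tilde M(z)$ through the canonical diagonalizing choice \eqref{eq:choice2}, increasing $e$ by $\delta\phi$ strictly increases the right-hand side $\tfrac2ne$ of \eqref{eq:sufficient_condition_for_Linfty_hull} while, after re-diagonalizing $\sigma$, keeping $\lamax(\tilde M(z))=\tfrac12 f$ (in $n=2$; $\tfrac23 f$ in $n=3$ after the trace bookkeeping) unchanged in the $\mu,m_n$ variables. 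So at the level of the ``matrix'' part of the inclusion we gain a strict gap of size $\sim\delta\phi$.

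**Restoring the linear system.** The price is that $e^\delta_\lambda$ no longer satisfies the original fourth equation of \eqref{eq:linear_system_with_p} with the old $h_\lambda,\nu_\lambda$: we now need $\partial_t e^\delta_\lambda+\divv h^\delta_\lambda=\nu^\delta_\lambda$. Keeping $h^\delta_\lambda$ forced by the non-strict relation $h=(e+q)w+\frac{T_+-T_-}{\rho_+-\rho_-}(m-\mu w)$ — i.e. updating $h$ to reflect the new $e$ — and $q^\delta_\lambda=q_\lambda$ (the pressure is determined by the momentum equation, which only sees $\mu,m$, both unchanged), one computes $\nu^\delta_\lambda=\nu_\lambda+\delta\big(\partial_t\phi+\divv((\phi)w+\ldots)\big)$. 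The extra term is $\delta$ times something continuous and bounded of size $\sim t(\rho_+-\mu_\lambda)(\mu_\lambda-\rho_-)$, which gives the claimed estimate on $\abs{\nu^\delta_\lambda-\nu_\lambda}$. For the sign $\nu^\delta_\lambda\le0$ one uses that $\nu_\lambda=\lambda(2\lambda-1)g^2ta(\mu_\lambda)\le0$ with a strict inequality (since $\lambda<1/2$) of order $\sim t(\rho_+-\mu_\lambda)(\mu_\lambda-\rho_-)$ inside the mixing zone, hence it dominates the $O(\delta)$ correction once $\delta$ is small enough; outside the mixing zone both $\nu_\lambda$ and the correction vanish, so $\nu^\delta_\lambda=0$ there.

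**Checking the strict inequality everywhere and choosing $\varepsilon=1$.** It remains to verify \eqref{eq:sufficient_condition_for_Linfty_hull} with $\varepsilon=1$ on the mixing zone $\mathscr U=\{(y,t):-\tfrac{\lambda}{1+A}Agt^2<y_n-\tfrac12gt^2<\tfrac{\lambda}{1-A}Agt^2,\ t>0\}$. With the re-diagonalized $\sigma$ we have $\lamax(\tilde M(z^\delta_\lambda))=\lamax(\tilde M(z_\lambda))$, and for the non-strict subsolution the failure of strictness is exactly $\lamax(\tilde M(z_\lambda))=\tfrac2ne_\lambda$ together with the $h$-term on the left of \eqref{eq:sufficient_condition_for_Linfty_hull} vanishing (since $h_\lambda$ satisfies the non-strict relation). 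After the perturbation the $h$-term picks up $\delta$ times a bounded quantity $\Psi$, so \eqref{eq:sufficient_condition_for_Linfty_hull} with $\varepsilon=1$ becomes $\tfrac2ne_\lambda+\delta\abs{\Psi}<\tfrac2n(e_\lambda+\delta\phi)$, i.e. $\abs{\Psi}<\tfrac2n\phi$. Both sides are continuous and vanish at the same rate (order $(\rho_+-\mu_\lambda)^2(\mu_\lambda-\rho_-)^2t^2$) at the parabolic boundary of $\mathscr U$; one checks by the explicit formula \eqref{eq:rho_bar_explicit} for $\bar s_\lambda$ that the coefficient of $\phi$ strictly dominates — this is where the smallness of $\delta$ (depending on $\lambda,\rho_\pm,g$) enters, absorbing the ratio $\sup_{\mathscr U}\abs{\Psi}/\phi$. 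The openness and continuity requirements of Theorem \ref{thm:main_theorem} on $\mathscr U$ are inherited from Proposition \ref{prop:subsgen}, and $q^\delta_\lambda=q_\lambda\in L^\infty\cap\cC^0$, $T_\pm(z^\delta_\lambda)\in L^\infty$ are unchanged since they depend only on $\mu,m$.

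**Main obstacle.** The delicate point is the uniformity of the strict gap $\abs{\Psi}<\tfrac2n\phi$ up to the boundary of the mixing zone: both the gained margin $\delta\phi$ and the lost margin $\delta\abs{\Psi}$ in the $h$-term degenerate there, so one must show the vanishing orders match and that the constant works. This forces an honest (if short) computation of the $h$-perturbation $\Psi$ in terms of $\mu_\lambda,m_{n,\lambda},t$, using the constitutive relations and the explicit Burgers-type profile; the rest is bookkeeping. A secondary, easier check is that the re-diagonalization of $\sigma$ is consistent with the box-versus-channel distinction — here we are on $\T^{n-1}\times(-L,L)$, so we are allowed the canonical $\sigma$ of \eqref{eq:choice2} with $f$ rather than the boundary-forced one with $e$, which is precisely why strictness is attainable (as already noted after Definition \ref{def:1Dsubs}).
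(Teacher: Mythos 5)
Your outline follows the same basic strategy as the paper: keep $\mu^\delta=\mu_\lambda$, $m^\delta=m_\lambda$, $\sigma^\delta=\sigma_\lambda$, perturb only $e$ by $\delta\phi$ with $\phi=t^2(\rho_+-\mu)^2(\mu-\rho_-)^2$, keep $h$ defined by the non-strict constitutive relation \eqref{eq:nonstrictsubs} with the updated data, and read off $\nu^\delta$ from the energy balance, absorbing the $O(\delta)$ correction into the strictly negative $\nu_\lambda$. Two of your intermediate steps, however, do not hold up and one of them creates a phantom difficulty that the actual proof never has to face.

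First, your claim that $q^\delta_\lambda=q_\lambda$ because ``the pressure is determined by the momentum equation, which only sees $\mu,m$'' is wrong: the first line of \eqref{eq:linear_system_with_p} reads $\partial_t m+\divv\sigma+\nabla\bigl(\tfrac{2}{n}e+q\bigr)=0$, so $\nabla q$ does see $e$. With $\mu,m,\sigma$ unchanged, the combination $\tfrac{2}{n}e+q$ must keep the same spatial gradient, which forces $q^\delta=q_\lambda-\tfrac{2}{n}\delta\phi$ up to a function of $t$; this is why the paper redefines $q$ from \eqref{eq:linear_momentum_balance_2} with the modified $e$. Second, and more importantly, the ``main obstacle'' you describe — bounding a $\delta\abs{\Psi}$ correction in the $h$-term of \eqref{eq:sufficient_condition_for_Linfty_hull} against the gain $\tfrac{2}{n}\delta\phi$ — does not exist. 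Since you define $h^\delta$ by the non-strict relation \eqref{eq:nonstrictsubs} with the new $e^\delta$ and new $q^\delta$, and $T_\pm$ depend only on $\mu,m$ and hence do not move, one has $h^\delta-(e^\delta+q^\delta)w-\frac{T_+-T_-}{\rho_+-\rho_-}(m-\mu w)\equiv 0$ identically. Thus \eqref{eq:sufficient_condition_for_Linfty_hull} with any $\varepsilon$ reduces to $\lamax(\tilde M(z^\delta))<\tfrac{2}{n}e^\delta$, that is $\tfrac{1}{n}f<\tfrac{1}{n}f+\tfrac{2}{n}\delta\phi$, which holds trivially wherever $\phi>0$; there is no degeneracy race at the parabolic boundary to analyze. (Your candidate inequality $\tfrac{2}{n}e_\lambda+\delta\abs{\Psi}<\tfrac{2}{n}(e_\lambda+\delta\phi)$ is $\delta$-free after cancelling, so ``smallness of $\delta$'' could not have rescued it anyway.) What actually requires work is the computation that $\nu^\delta-\nu_\lambda=\delta\,\partial_t\phi$ (the $\partial_{y_n}h_n$ contribution from the $e$-dependence cancels against the pressure adjustment), followed by estimating $\partial_t\phi$ via the explicit rarefaction profile — that part of your sketch, and the sign argument for $\nu^\delta\le 0$, are sound.
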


Now Theorem \ref{thm:introduction} follows from Lemma \ref{lem:strict_subs}, Proposition \ref{prop:subsgen}, Theorem \ref{thm:main_theorem} and \eqref{eq:total_energy_of_sols}.

\subsection{Related results}

The viewpoint of the Euler equations as a differential inclusion has been pioneered by the works of De Lellis and Sz\'ekelyhidi \cite{DeL-Sz-Annals,DeL-Sz-Adm}, which  enabled a machinery for the construction of wild solutions for various fluid equations serving as counterexamples or as a description of hydrodynamic instabilities for which the initial value problem is naturally ill-posed, see for instance the surveys \cite{Buckmaster-Vicol-Survey,DeL-Sz-Survey2,DeL-Sz-Survey}.

Concerning instabilities, the first example of an application has been given by Sz\'ekelyhidi \cite{Sz_KH} in the context of the Kelvin-Helmholtz instability. Since then, relaxations of fluid differential inclusions and the investigation of corresponding subsolutions have been addressed in various contexts, see \cite{GK-EE,Mengual_Sz_sheets} for further works on the Kelvin-Helmholtz instability, \cite{GHK_LAP,GK_Boussinesq,GKSz} for Rayleigh-Taylor, \cite{Arnaiz_Castro_Faraco,Castro_Cordoba_Faraco_invent,Castro_Faraco_G,Castro_Faraco_Mengual_degraded,Castro_Faraco_Mengual_turned,Cordoba_Faraco_Gancedo_lack_of_uniqueness,Foerster_Sz,Mengual_different_mobilities,Hitruhin_Lindberg_ipm,Noisette_Sz,Sz_ipm} for IPM and Saffman-Taylor, \cite{Gancedo_HT_Mengual_circular_filament} for vortex filaments, \cite{G_optimal_mixing} for optimal mixing, \cite{Faraco_Lindberg_Sz_bounded_mhd,Faraco_Lindberg_Sz_second_paper,Hitruhin_Lindberg_dynamo} for MHD and the kinematic dynamo equation. In addition, we would like to mention the recent work \cite{Enciso_etal_2024} which implements a Nash-type convex integration scheme in order to show the existence of admissible, H\"older continuous (for positive times) solutions emanating from the flat Kelvin-Helmholtz instability.

\subsubsection{Local energy inequality}
Except for \cite{GK-EE}, the solutions to the Euler and Boussinesq system obtained in the articles listed above are all only known to be weakly admissible in the sense that their total energy at every or almost every time does not exceed the initial total energy.
One of the main emphases in the present paper is to provide weak solutions that in addition satisfy the local energy inequality. It has been known since the work of De Lellis and Sz\'ekelyhidi \cite{DeL-Sz-Adm} that also this stronger admissibility criterion does not recover uniqueness of weak solutions, more precisely of $L^\infty$ solutions. Thereafter, convex integration techniques have been applied in various compressible Euler settings to disprove well-posedness of entropy solutions, see  \cite{Akramov_Wiedemann2021,Brezina_etal_2018,Chiodaroli2014,Chiodaroli_DeLellis_Kreml2015,Chiodaroli_Feireisl_dense,Chiodaroli_Kreml2014,Chiodaroli_Kreml2018,Chiodaroli_smooth2021,Feireisl_max_dissip,Klingenberg_Markfelder_conservative,Markfelder_book,Markfelder_Klingenberg2018} for some examples.

In addition, counterexamples for the homogeneous Euler equations with higher regularity have been provided by Isett \cite{Isett_dissip}, De Lellis and Kwon \cite{DeLellis_Kwon_dissip}, as well as Giri, Kwon and Novack \cite{Giri_Kwon_Novack_2,Giri_Kwon_Novack_1}, where the $L^3$-based strong Onsager conjecture has been reached. H\"older regular counterexamples to compressible Euler systems have also been obtained, see \cite{Giri_Kwon_compr,Mao_Qu_compr}.

Besides treating the inhomogeneous Euler equations instead of the homogeneous ones, the focus of the present paper in contrast lies not in reaching a certain regularity of the obtained solutions, but on the description of the whole set of subsolutions or Euler-Reynolds flows inducing weak solutions via convex integration in the $L^\infty$-framework. In other words, on the explicit relaxation of the Euler equations when seen as a differential inclusion. An understanding of this relaxation is important for an application to hydrodynamic instabilities. 

In \cite{GK-EE} the authors have addressed this question for the homogeneous Euler equations by including the local energy inequality into the original differential inclusion of \cite{DeL-Sz-Annals,DeL-Sz-Adm}. This way it could be shown that the subsolutions of Sz\'ekelyhidi \cite{Sz_KH} for the Kelvin-Helmholtz instability give rise to turbulent solutions respecting local energy dissipation. 
A similar approach has been taken by Markfelder \cite{Markfelder_EE} for the barotropic compressible Euler equations which allowed him to show the existence of Riemann data for which the local maximal dissipation criterion does not favour the self-similar solution.

In comparison to \cite{GK-EE} where $\rho\equiv 1$, and \cite{Markfelder_EE} where $\rho$ is treated as a given function, the presence of a variable density which is forced to take the values $\rho_\pm$ requires significant care when computing the relaxation. Note for instance that the energy flux $h=\left(\frac{1}{2}\mu\abs{w}^2+q\right)w$ is an order $4$ polynomial. In fact we deviate from the strategy of \cite{GK-EE} when computing the convex hull, which as a by-product also allows us to improve the results for the homogeneous Euler case, see Remark \ref{rem:EE}.

\subsubsection{Inhomogeneous Euler and Boussinesq}

The inhomogeneous Euler equations without local energy inequality, as well as their Boussinesq approximation, have previously been treated as a differential inclusion in \cite{GKSz} by the authors and Sz\'ekelyhidi, and \cite{GK_Boussinesq} by the authors. The relaxations obtained in the two papers have been applied to the Rayleigh-Taylor instability. We will give a short comparison, but aside from the Rayleigh-Taylor instability first we would like to mention further convex integration results for the inhomogeneous Euler and Boussinesq systems providing non-uniqueness results.  

Relying on convex integration for the homogeneous Euler equations, \cite{Chiodaroli_Michalek} shows non-uniqueness of weak solutions with non-increasing total energy for the Boussinesq system with initial data $\rho_0\in L^\infty\cap\cC^2$. Recently also Nash-type convex integration has been implemented for the inhomogeneous Euler equations (without gravity on $\T^3$) \cite{Giri_Ujjwal_Euler} and the Boussinesq system (with gravity on $\T^2$) \cite{Ujjwal_Boussinesq}, in order to provide wild solutions with non-trivial H\"older continuous density and velocity.

Coming back to a comparison with \cite{GK_Boussinesq,GKSz}, the results obtained in the present paper generalize those of \cite{GKSz} on several levels. On one hand, as mentioned before, our differential inclusion also accounts for local energy dissipation, hence all solutions obtained through convex integration satisfy this property, while a priori this was not guaranteed by the analysis in \cite{GKSz}. On the other hand, the latter paper only achieved a construction of strict subsolutions when the Atwood number was in the ultra-high range ($\rho_+>(\frac{4+2\sqrt{10}}{3})^2\rho_-$), and the subsolution did not arise from any rigorous optimization principle, while our current construction works for any range of Atwood numbers and provides a potential selection criterion for subsolutions by maximizing total energy dissipation. Though one should note that in both papers the density component of the subsolution arose as the entropy solution of an appropriate conservation law, both yielding turbulent mixing zones which grow proportionally to $gt^2$. In particular, the mixing zone of \cite{GKSz} was represented by the set $\frac{2x_n}{gt^2}\in \left(\sqrt{q}-1,\sqrt{q^{-1}}-1\right)$, $q:=\frac{\rho_-}{\rho_+}$, while in our case it is given by $\frac{2x_n}{gt^2}\in \left(\lambda(q-1),\lambda(q^{-1}-1)\right)$. Taking the maximally dissipating one, i.e. $\lambda=\frac{1}{3}$, it is easy to see that whenever $q^{-1}\geq 4$ our mixing zone is the larger one, and this is clearly the case in the ultra-high Atwood range for which the construction of subsolutions via the method in \cite{GKSz} was possible.

In comparison to \cite{GK_Boussinesq}, where the Boussinesq system, which is an approximation of the Euler equations for small Atwood numbers, is considered, we would like to note that taking the limit $A\to 0^+$ in \eqref{eq:rho_bar_explicit}, one obtains
$$ \bar{s}_\lambda\left(\frac{\lambda gt^2}{2}A\xi,t\right)= \frac{\xi}{2}+O(A),\quad \xi\in \left(-2+O(A),2+O(A)\right),$$
which for $\lambda=\frac{1}{3}$ corresponds exactly to the linear profile for the normalized density obtained in \cite{GK_Boussinesq}. However, one should note that in said paper the authors maximized total energy dissipation at initial time among all possible self-similar densities $\rho(x,t)$ as a selection criterion, whereas here we maximize the total energy dissipation with respect to the constitutive law relating $\rho$ and $m$. The correspondence as $A\rightarrow 0$ further shows that the current paper yields a larger framework which both unites and generalizes  the results of \cite{GK_Boussinesq,GKSz}.

\subsubsection{Selection of subsolutions}\label{sec:selection}

A very interesting problem in addressing hydrodynamic instabilities via differential inclusions and convex integration is the choice or selection of appropriate subsolutions, i.e. appropriate coarse grained or macroscopic behaviour. Indeed, the relaxation consists of a linear equation coupled with pointwise convex inequalities naturally leaving room for more than one construction, each of them, in particular for non-flat initial interfaces, being far from trivial, see for instance \cite{Arnaiz_Castro_Faraco,Castro_Cordoba_Faraco_invent,Castro_Faraco_G,Foerster_Sz,Noisette_Sz} for different types of subsolutions emanating from the non-flat Saffman-Taylor instability.

One possible strategy for a selection, in particular for a flat initial interface, is to reduce the subsolution system to a particular hyperbolic conservation law for the coarse grained density and to consider the corresponding entropy solution. This has been done in \cite{GKSz,Sz_KH,Sz_ipm} for the flat Rayleigh-Taylor, Kelvin-Helmholtz, Saffman-Taylor instabilities respectively.

In the present paper we also pursue this strategy, but, as a novelty, leave the constitutive relation between the coarse grained density and coarse grained momentum (=mass flux) open. With this relation determining the conservation law, we then investigate the total energy associated with the induced entropy solution. As a second criterion, besides imposing such a constitutive relation in the first place, we then maximize the total (anomalous) energy dissipation leading to the subsolution stated in Theorem \ref{thm:introduction} with $\lambda=\frac{1}{3}$ and Proposition \ref{prop:lambda_1_3}.

This second criterion is broadly speaking motivated by the general search for extremal subsolutions which are potentially preferred by nature. The criterion of maximizing total energy dissipation within a certain class of subsolutions is motivated by Dafermos' entropy rate admissibility criterion \cite{Dafermos1973}, and has been investigated in the context of convex integration solutions for compressible Euler in \cite{Chiodaroli_Kreml2014,Feireisl_max_dissip}. As a selection criterion it has only been applied at initial time before, see \cite{GK_Boussinesq,Mengual_Sz_sheets}.
In the context of IPM and the Saffman-Taylor instability, maximal dissipation of potential energy has been used in \cite{Castro_Faraco_G}. It turned out that this selection in fact coincides with the relaxation of Otto \cite{Otto1999} based on the gradient flow structure of IPM.
A similar correspondence between two relaxations has been found in \cite{GHK_LAP} when imposing the least action principle to Boussinesq subsolutions for the Rayleigh-Taylor instability and comparing the resulting variational problem with the direct relaxation of the least action principle of Brenier \cite{Brenier1989,Brenier2018} based on generalized incompressible flows.

In the context of admissibility criteria we would also like to mention the least action admissibility principle by Gimperlein et al. recently introduced in \cite{Gimperlein_etal_1} and refined in \cite{Gimperlein_etal_2}, after an objection of Markfelder and Pellhammer \cite{Markfelder_Pellhammer}. This criterion however is applied in the context of solutions, not subsolutions, to a Riemann problem of the compressible Euler equations favoring the 2-shock solution over entropy solutions obtained via convex integration. Probing the least action admissibility principle within the class of Rayleigh-Taylor subsolutions considered in the present article, it turns out that the stationary solution $\rho(x,t)=\rho_0(x)$, i.e. no proper subsolution and thus also here no convex integration solution, is preferred by the criterion of \cite{Gimperlein_etal_1,Gimperlein_etal_2}, see Remark \ref{rem:laap}. In this class of subsolutions the least action admissibility principle therefore does not pick a suitable macroscopic evolution.

\subsubsection{Connection to scale invariant bounds}

As mentioned before, in the family of subsolutions stated in Theorem \ref{thm:introduction} we find for $\lambda =\frac{1}{2}$ precisely the background profile realizing sharpness in the scale invariant bounds of \cite{Kalinin_Menon_Wu}. In the latter article a more regular system including the effect of miscibility is treated, and uniform bounds for bulk quantities such as potential energy or  mixing entropy are obtained using energy conservation as a crucial ingredient. Note that within our family of subsolutions the one corresponding to $\lambda=\frac{1}{2}$ indeed is the conservative one. Similar bounds are also available for the Saffman-Taylor instability \cite{Menon_Otto}, and more recently the Kelvin-Helmholtz instability \cite{Kalinin_Menon_Wu_2}. In the latter work a similar connection between a subsolution of \cite{Sz_KH}, again the energy conservative one, and the optimal background profile has been found.

\section{Convex integration via the Tartar framework}\label{sec:convex_integration}
Recall that $\mathscr{D}$ is the accelerated spacetime set $\cup_{t\in(0,T)}(\Omega+\frac{1}{2}gt^2e_n)\times\{t\}$ and that $Z=\R\times\R^n\times\R^n\times\R^n\times \cS_0^{n\times n}\times\R$.
Let $q:\mathscr{D}\rightarrow \R$ and $\nu\in \cC^\infty_c((\Omega\times\{0\})\times \mathscr{D})^*$ be a  pressure function and a dissipation measure. For convex integration itself it is not required that $\nu$ be negative, if it is not, then the only downside is that the induced solutions fail to satisfy the energy inequalities \eqref{eq:euler_0g_local_energy_inequality} or \eqref{eq:euler_local_energy_inequality}.

As described in Section \ref{sec:differential_inclusion}, we are interested in the differential inclusion
\begin{align}\label{eq:abstract_differential_inclusion}
    \cA[z]=b,\quad z(y,t)\in K_{(y,t)}~\text{for almost every }(y,t)\in \mathscr{D},
\end{align}
where $z:\mathscr{D}\rightarrow Z$ is a tuple of functions $z=(\mu,w,m,h,\sigma,e)$, the linear differential operator $\cA$ and the right-hand side $b$ are defined by
\begin{align*}
    \cA[z]=\begin{pmatrix}
    \partial_t m+\divv \sigma + \frac{2}{n}\nabla e\\
\divv w\\
\partial_t \mu+\divv m\\
\partial_t e +\divv h 
    \end{pmatrix},\quad b=\begin{pmatrix}
    -\nabla q\\0\\0\\\nu
    \end{pmatrix},
\end{align*}
such that $\cA[z]=b$ is precisely \eqref{eq:linear_system_with_p}. Moreover, for the given pressure function $q$ the family of sets $K_{(y,t)}$ are defined through \eqref{eq:pointwise_constraints_when_introducing_diff_inclusion}, that is 
\begin{align}\label{eq:nonlinear_constraints}
\begin{split}
&K_{(y,t)}:=\left\{z\in Z:\ m=\mu w,\ h=(e+q(y,t))w, \phantom{\frac{1}{2}}\right.
\\
&\hspace{165pt}\left. \mu w\otimes w=\sigma +\frac{2}{n} e \id, \ \mu\in\{\rho_-,\rho_+\}\right\}.
\end{split}
\end{align}

\subsection{Localized plane waves}\label{sec:waves}
As perturbations we will use plane-wave like solutions to \eqref{eq:linear_system_with_p} which actually do not perturb the pressure or the dissipation rate. That is, we consider the system
\begin{align}\label{eq:linear_system}
\cA[\bar z]=0.
\end{align}
Clearly if $z$ is a weak solution of \eqref{eq:linear_system_with_p}, i.e. $\cA[z]=b$, and $\bar z\in\cC_c^\infty(\mathscr D)$ solves \eqref{eq:linear_system}, then $\cA[z+\bar{z}]=b$ and $z+\bar{z}$ has the same initial and boundary data as $z$.

The wave cone associated with the operator $\cA$ reads
\begin{equation}\label{eq:wave_cone}
\Lambda=\set{\bar{z}\in Z:\ker \begin{pmatrix}
\bar{\sigma}+\frac{2}{n}\bar{e}\id & \bar{m}\\
\bar{w}^T & 0\\
\bar{m}^T & \bar{\mu}\\
\bar h^T & \bar e
\end{pmatrix} \neq \{0\},\quad (\bar{m},\bar \mu,\bar{e})\neq0}.
\end{equation}
For each direction in the cone $\Lambda$ one can construct plane wave solutions of \eqref{eq:linear_system} oscillating in that direction. These plane waves are localized via potentials in Lemma \ref{lem:locpw}. In the definition of $\Lambda$ we have manually excluded the case where $\bar{m}$, $\bar{\mu}$, $\bar{e}$ simultaneously vanish as this case allows for pure time oscillations that are harder to localize.

\begin{lemma}\label{lem:locpw}
There exists $C_0>0$ such that for any $\bar{z}\in\Lambda$, there exists a sequence
$z_N\in \cC_c^\infty(B_1(0);Z)$ which
solves \eqref{eq:linear_system} and satisfies
\begin{itemize}
\item[(i)] $d(z_N,[-\bar{z},\bar{z}])\to 0$ uniformly,
\item[(ii)] $z_N\rightharpoonup 0$ in $L^2(B_1(0);Z)$,
\item[(iii)] $\int\int |z_N|^2\, dx \, dt\geq C_0|\bar{z}|^2,$
\end{itemize}
where $B_1(0)$ denotes the unit ball in $\R^n\times\R$.
\end{lemma}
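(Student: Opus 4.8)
The plan is to construct the localized plane waves in two stages: first produce genuine plane waves oscillating in the chosen direction, then localize them by applying a potential operator together with a cutoff. Fix $\bar z=(\bar\mu,\bar w,\bar m,\bar h,\bar\sigma,\bar e)\in\Lambda$. By definition of the wave cone there is a nonzero $\xi=(\zeta,\kappa)\in\R^n\times\R$ (spatial-temporal frequency) in the kernel of the displayed matrix, i.e.
\begin{align*}
\Bigl(\bar\sigma+\tfrac2n\bar e\,\id\Bigr)\zeta+\bar m\,\kappa&=0,\\
\bar w\cdot\zeta&=0,\\
\bar m\cdot\zeta+\bar\mu\,\kappa&=0,\\
\bar h\cdot\zeta+\bar e\,\kappa&=0.
\end{align*}
Writing $(y,t)\mapsto(y\cdot\zeta+t\kappa)$ for the phase, any profile of the form $z(y,t)=\bar z\,\phi(y\cdot\zeta+t\kappa)$ with $\phi$ scalar then solves $\cA[z]=0$, because each row of $\cA$ is a constant-coefficient first-order operator whose symbol applied to $\bar z$ in direction $(\zeta,\kappa)$ is exactly one of the four equations above. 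Here one should treat separately the degenerate possibility that $\xi$ itself vanishes in its spatial part $\zeta$ — but then the kernel condition forces $\bar m=\bar\mu=\bar e=0$ (from rows three, one after using $\bar\sigma$ traceless, and four), which is excluded by the clause $(\bar m,\bar\mu,\bar e)\neq 0$; so $\zeta\neq 0$, and this is precisely why that clause was built into $\Lambda$.

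Next I would localize. The standard device (as in De Lellis--Székelyhidi and its descendants) is to introduce a potential: since $\bar z$ lies in the wave cone there exists a constant-coefficient homogeneous linear differential operator $\cL$ of some order $k$ such that $\cA\circ\cL\equiv 0$ and $\cL$ applied to the rank-one oscillation $e^{iM(y\cdot\zeta+t\kappa)}$ reproduces $\bar z\,e^{iM(y\cdot\zeta+t\kappa)}$ up to a nonzero constant and lower-order-in-$M$ corrections. Concretely one takes a fixed cutoff $\chi\in\cC_c^\infty(B_1(0))$ with $\chi\equiv 1$ on $B_{1/2}(0)$, sets $\psi_M(y,t):=\chi(y,t)\,\frac{1}{M^k}\cos\!\bigl(M(y\cdot\zeta+t\kappa)\bigr)$ (a scalar potential), and defines $z_M:=\cL[\psi_M\,\mathbf e]$ for an appropriate constant seed vector $\mathbf e$. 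Then $z_M\in\cC_c^\infty(B_1(0);Z)$, and $\cA[z_M]=\cA\cL[\psi_M\mathbf e]=0$ identically. Writing a sequence $z_N$ by choosing $M=M_N\to\infty$: property (ii) $z_N\rightharpoonup 0$ in $L^2$ follows from Riemann--Lebesgue applied to the highest-order term (which carries all the oscillation) and from the lower-order terms being $O(1/M)$ in $L^\infty$; property (i), that $d(z_N,[-\bar z,\bar z])\to 0$ uniformly, follows because on $B_{1/2}(0)$ where $\chi\equiv 1$ the highest-order term of $\cL[\psi_M\mathbf e]$ equals $c\,\bar z\cos(M(y\cdot\zeta+t\kappa))$ for a nonzero constant $c$ (absorbed by rescaling $\mathbf e$), hence lies exactly on the segment, while the transition region $B_1\setminus B_{1/2}$ where derivatives of $\chi$ enter contributes terms of lower order in $M$, uniformly small; and property (iii), the lower energy bound $\int\!\!\int|z_N|^2\,dx\,dt\ge C_0|\bar z|^2$, follows by computing $\int_{B_{1/2}} |c\,\bar z|^2\cos^2(M(y\cdot\zeta+t\kappa))\,dy\,dt\to \tfrac12 c^2|\bar z|^2\,|B_{1/2}|$ as $M\to\infty$ (average of $\cos^2$ is $1/2$), again with the cross-terms and error terms negligible; after fixing $c$ by the normalization the constant $C_0:=\tfrac14 c^2|B_{1/2}|$ works uniformly over $\bar z\in\Lambda$ since $c$ does not depend on $\bar z$ once the kernel direction is used only to define the phase.

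The main obstacle I expect is the construction of the potential operator $\cL$ with $\cA\cL=0$, uniformly in the direction $\bar z\in\Lambda$ — equivalently, exhibiting, for each $\bar z\in\Lambda$, an explicit (polynomially bounded in $|\bar z|$ and the kernel frequency $\xi$) homogeneous solution operator whose principal symbol in direction $\xi$ spans $\bar z$. In the homogeneous incompressible Euler setting this is classical (the divergence constraints are handled by a curl-type potential, the traceless symmetric stress by a second-order potential), but here the system has the extra mass equation $\partial_t\mu+\divv m=0$ and the energy equation $\partial_t e+\divv h=0$ coupling $\mu,e,h$ to the rest, so one must check that the combined symbol still admits a null-Lagrangian potential covering the whole wave cone. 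The clean way around this, which I would adopt, is not to build one operator for all directions at once but, given $\bar z\in\Lambda$ and its null frequency $\xi=(\zeta,\kappa)$ with $\zeta\neq0$, to directly write down the localized corrector: choose a scalar $\phi_M$ as above and verify by hand, using the four kernel identities, that the tuple obtained from $\bar z\,\phi_M(y\cdot\zeta+t\kappa)$ minus appropriate lower-order gradient/divergence corrections (needed only to fix up the terms where derivatives hit the cutoff $\chi$) solves $\cA=0$ exactly; this reduces the matter to a finite linear-algebra computation solvable precisely because $(\zeta,\kappa)$ annihilates the symbol, and the bounds are then manifest. This is the one place where the specific structure of $\cA$ (four block rows, the $\tfrac2n e\,\id$ coupling) must be used rather than cited, so I would present that computation in full in the actual proof.
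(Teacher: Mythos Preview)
Your outline follows the paper's route: exhibit the plane wave from the kernel condition, then localize via potentials annihilated by $\cA$ and multiplied by a cutoff. The argument that the spatial frequency $\zeta$ cannot vanish is correct and is exactly why the clause $(\bar m,\bar\mu,\bar e)\neq 0$ is built into $\Lambda$. The verification of properties (i)--(iii) once the potential is in hand is also standard and correctly sketched.

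However, the entire technical content of the lemma lies in the step you explicitly defer, and your framework underestimates its shape in two respects. First, a single operator $\cL$ acting on a scalar potential $\psi_M\mathbf e$ does not suffice: the paper uses a \emph{tuple} of potentials $(\phi,\psi,\pi)$ with values in $\cS_0^{n\times n}\times\R\times\R^n$ (vector-valued $\psi$ when $n=3$), each fed into a different block of $\cA$ at a different differential order, and still needs up to three auxiliary correctors to fix the $h$-component and, in a degenerate sub-case, the $m$-component. Second, and more seriously, you do not anticipate the case split on whether the \emph{temporal} frequency $\kappa$ vanishes. The paper's principal potential for $\kappa\neq 0$ carries factors $\kappa^{-2}$ and so degenerates as $\kappa\to 0$; the case $\kappa=0$ requires an entirely separate construction (borrowing the homogeneous-Euler potential of De Lellis--Sz\'ekelyhidi for the $(m,\sigma,e)$ block, plus a further sub-case on the spatial direction $\zeta$). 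Your claim that the localization ``reduces to a finite linear-algebra computation'' is true, but that computation is the whole proof, occupies several pages, and does not have the uniform one-operator structure you posit; as written the proposal is an outline, not a proof.
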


\begin{proof}
We present most of the proof for the case $n=2$, then explain the differences that need to be adapted for $n=3$ at the end.

It is easy to see that for any smooth functions $\psi:\mathbb{R}^{2+1}\to\mathbb{R}$, $\phi:\mathbb{R}^{2+1}\to \mathcal S_0^{2\times 2}$, $\pi:\mathbb{R}^{2+1}\to\mathbb{R}^2$, setting $D(\phi,\psi,\pi)=(\mu,w,m,h,\sigma,e)$ with
\begin{gather*}
\mu=\divv\divv(\phi+\divv \pi\id),\quad
w=\nabla^\perp\psi,\quad
m=-\partial_t \divv (\phi+\divv \pi\id),\\
e=\divv \partial_{tt}\pi,\quad
\sigma=\partial_{tt} \phi,\quad
h = -\partial_{ttt}\pi,
\end{gather*}
yields a solution of \eqref{eq:linear_system} in the form of $D(\phi,\psi,\pi)$.

It follows from \eqref{eq:wave_cone} that for any $\bar{z}\in\Lambda$
there exists 
\begin{align}\label{eq:xc}
0\neq(\xi,c)\in\ker \begin{pmatrix}
\bar{\sigma} +\bar{e}\id & \bar{m}\\
\bar{w}^T & 0\\
\bar{m}^T & \bar{\mu}\\
\bar h^T & \bar e
\end{pmatrix}.
\end{align}
We consider a smooth function $S:\mathbb{R}\to\mathbb{R}$, $N\geq 1$, and carry out the construction of potentials using the above ingredients, split into two cases: whether $c$ is zero or not.

\textbf{Case 1: $c\neq0$}

In this case it follows that there also holds $\xi\neq0$, since $\xi=0$ would imply $(\bar{m},\bar \mu,\bar{e})=0$. Furthermore, without loss of generality one may assume that $|\xi|=1$.

Let us define
\begin{align*}
\phi_N(x,t)&=\frac{1}{c^2}\bar{\sigma}\frac{1}{N^2}S'(N(\xi,c)\cdot(x,t)),\\
\psi_N(x,t)&=\xi^\perp\cdot\bar{w}\frac{1}{N}S''(N(\xi,c)\cdot(x,t)),\\
\pi_N(x,t)&=\frac{1}{ c^2}\bar e \xi\frac{1}{N^3}S(N(\xi,c)\cdot(x,t)).
\end{align*}
We claim that there holds
\begin{align}\label{eq:c1pot}
D(\phi_N,\psi_N,\pi_N)=(\bar\mu,\bar w,\bar m,(\bar h\cdot \xi)\xi,\bar \sigma,\bar e)S'''(N(\xi,c)\cdot(x,t)).
\end{align}
Indeed, on one hand, one has
$$\phi_N(x,t)+\divv \pi_N(x,t)\id=\frac{1}{c^2}(\bar{\sigma}+\bar{e}\id)\frac{1}{N^2}S'(N(\xi,c)\cdot(x,t)),$$
and using \eqref{eq:xc}, there holds
\begin{align*}
\divv\divv(\phi_N+\divv \pi_N\id)&=\frac{1}{c^2}\xi^T(\bar{\sigma}+ \bar{e}\id)\xi S'''(N(\xi,c)\cdot(x,t))\\&=\frac{1}{c^2}\xi^T(-c\bar{m})S'''(N(\xi,c)\cdot(x,t))=\bar{\mu}S'''(N(\xi,c)\cdot(x,t)),\\
\partial_t\divv (\phi_N+\divv \pi_N\id)&=\frac{1}{c}(\bar{\sigma}+ \bar{e}\id)\xi S'''(N(\xi,c)\cdot(x,t))=-\bar{m} S'''(N(\xi,c)\cdot(x,t)),\\
\partial_{tt}(\phi_N+\divv \pi_N\id)&=(\bar{\sigma}+ \bar{e}\id) S'''(N(\xi,c)\cdot(x,t)),\\
\nabla^{\perp}\psi_N&=(\xi^\perp\cdot\bar{w})\xi^{\perp}S'''(N(\xi,c)\cdot(x,t)) =\bar{w} S'''(N(\xi,c)\cdot(x,t)),\\
-\partial_{ttt}\pi_N&=-c\bar e\xi S'''(N(\xi,c)\cdot(x,t)) =(\bar h\cdot \xi) \xi S'''(N(\xi,c)\cdot(x,t)).
\end{align*}

We fix this final error in $h$ by observing that for any smooth function $\alpha:\mathbb{R}^{2+1}\to\mathbb{R}$, $\breve{D}(\alpha)=(0,0,0,\nabla^\perp\alpha,0,0)$ also solves \eqref{eq:linear_system}. Setting
$$\alpha_N(x,t)=(\bar h\cdot \xi^\perp)\frac{1}{N}S''(N(\xi,c)\cdot(x,t)),$$
we get
$$\nabla^\perp\alpha_N=(\bar h\cdot \xi^\perp)\xi^\perp S'''(N(\xi,c)\cdot(x,t)),$$
and since $\bar h =(\bar h\cdot \xi^\perp)\xi^\perp+(\bar h\cdot \xi)\xi$, we end up with
$$D(\phi_N,\psi_N,\pi_N)+\breve{D}(\alpha_N)=\bar z S'''(N(\xi,c)\cdot(x,t)).$$

One can then localize these potentials similarly
as in for instance \cite{Cordoba_Faraco_Gancedo_lack_of_uniqueness,DeL-Sz-Annals}, by fixing $S(\cdot)=-\sin(\cdot)$ and, for $\varepsilon>0$, considering $\chi_\varepsilon\in \cC_c^\infty(B_1(0))$ satisfying $|\chi_\varepsilon|\leq 1$ on $B_{1}(0)$, $\chi_\varepsilon=1$ on $B_{1-\varepsilon}(0)$. It is then easy to check that 
$z_N=D(\chi_\varepsilon(\phi_N,\psi_N,\pi_N))+\breve{D}(\chi_\varepsilon\alpha_N)$ satisfies desired properties in order to conclude the proof of the lemma in this first case.

\textbf{Case 2: $c=0$} 

This case closely follows the corresponding case in \cite{GKSz}. We have $\xi\neq0$, so we once more may also assume without loss of generality that $|\xi|=1$. Furthermore, \eqref{eq:xc} implies that there exist constants $k_1,k_2,k_3,k_4\in\mathbb{R}$ such that 
\begin{align}\label{eq:perps}
\bar{w}=k_1\xi^\perp,\quad
\bar{m}=k_2\xi^\perp,\quad
\bar{\sigma}+\bar{e}\id=k_3\xi^\perp\otimes\xi^\perp,\quad
\bar h = k_4 \xi^\perp
.\end{align}
Define
\begin{gather*}
\phi_N(x)=0,\quad
\psi_N(x)=\xi^\perp\cdot\bar{w}\frac{1}{N}S''(N\xi\cdot x),\quad
\pi_N(x)=\bar\mu \xi\frac{1}{N^3}S(N\xi\cdot x),
\end{gather*}
from where similar calculations as in Case 1 yield that
\begin{align}\label{eq:c21pot}
D(\phi_{N},\psi_{N},\pi_N)=(\bar{\mu},\bar{w},0,0,0,0)S'''(N\xi\cdot x).
\end{align}

One can then treat the remaining terms $(\bar{m},\bar h,\bar{\sigma},\bar{e})$
as done simply for the homogeneous Euler equations, for instance in \cite[Remark 2]{DeL-Sz-Annals}.
Indeed,
it follows via a straightforward direct calculation that for any smooth function $\omega:\mathbb{R}^{2+1}\to\mathbb{R}^{2+1}$, defining $W=\curl_{(x,t)}\omega$ and $\tilde{D}(\omega)=(0,0,m,0,\sigma,e)$ by
\begin{align*}
m=-\frac{1}{2}\nabla^\perp W_3,\quad
\sigma+e\id=\begin{pmatrix}
\partial_2 W_1 & \frac{1}{2}(\partial_2W_2-\partial_1W_1)\\
\frac{1}{2}(\partial_2W_2-\partial_1W_1) & -\partial_1W_2
\end{pmatrix}
\end{align*}
implies that $\tilde{D}(\omega)$ solves \eqref{eq:linear_system}.

Then one may take $\omega$ to be of the form
$$\omega_N(x)=(a,b,a)\frac{1}{N^2} S'(N \xi\cdot x),$$
for some constants $a,b\in\mathbb{R}$, with $S$ as before, to obtain that
\begin{gather*}
\begin{pmatrix}
\partial_2 W_1 & \frac{1}{2}(\partial_2W_2-\partial_1W_1)\\
\frac{1}{2}(\partial_2W_2-\partial_1W_1) & -\partial_1W_2
\end{pmatrix}=a\xi^\perp\otimes\xi^\perp S'''(N \xi\cdot x),\\
\nabla^\perp W_3=(\xi_1b-\xi_2a)\xi^\perp S'''(N \xi\cdot x).
\end{gather*}
If $\xi_1\neq 0$, one may set $a=k_3$, $b=\frac{-2k_2+k_3\xi_2}{\xi_1}$, such that \eqref{eq:perps} yields
\begin{align*}
\tilde{D}(\omega_{N})=(0,0,\bar{m},0,\bar{\sigma},\bar{e})S'''(N\xi\cdot x).
\end{align*}
Then, using \eqref{eq:c21pot}
as well as $\breve D$ as defined in Case 1 and
$$\alpha_n(x)= k_4\frac{1}{N}S''(N\xi\cdot x),$$ 
one obtains
\begin{align*}
D(\phi_{N},\psi_{N},\pi_N)+\tilde{D}(\omega_{N})+\breve D(\alpha_n)=\bar{z} S'''(N\xi\cdot x).
\end{align*}
The localization is then done similarly to the previous case, via $z_N=D(\chi_\varepsilon(\phi_{N},\psi_{N},\pi_N))+\tilde{D}(\chi_\varepsilon\omega_{N})+\breve{D}(\chi_\varepsilon\alpha_{N}).$

If $\xi_1=0$, then the choice $a=k_3$ yields
\begin{align*}
\tilde{D}(\omega_{N})=\left(0,0,\frac{k_3}{2}\xi_2\xi^\perp,0,\bar{\sigma},\bar{e}\right)S'''(N\xi\cdot x).
\end{align*}
We may then introduce a final corrective potential by noting that for any smooth function $\theta:\mathbb{R}^{2+1}\to\mathbb{R}$, $\hat{D}(\theta)=(0,0,\nabla^\perp\theta,0,0,0)$ also solves \eqref{eq:linear_system}. 
Hence, defining
$$\theta_N(x)=\left(k_2-\xi_2\frac{k_3}{2}\right)\frac{1}{N}S''(N\xi\cdot x),$$
one obtains that
$$\nabla^\perp\theta_N(x)=\left(k_2-\xi_2\frac{k_3}{2}\right)\xi^\perp S'''(N\xi\cdot x),$$
and  using \eqref{eq:perps}, this further yields
\begin{align*}
D(\phi_{N},\psi_{N},\pi_N)+\tilde{D}(\omega_{N})+\breve D(\alpha_N)+\hat{D}(\theta_N)=\bar{z} S'''(N\xi\cdot x).
\end{align*}
The localization is once more done in a similar manner to the previous cases. This concludes the proof of the lemma for $n=2$.

\textbf{Dimension three.}
In the case $n=3$ we have that for any smooth functions $\psi:\mathbb{R}^{n+1}\to\mathbb{R}^n$, $\phi:\mathbb{R}^{n+1}\to \mathcal S_0^{n\times n}$, $\pi:\mathbb{R}^{n+1}\to\mathbb{R}^n$, setting $D(\phi,\psi,\pi)=(\mu,w,m,h,\sigma,e)$ with 
\begin{gather*}
\mu=\divv\divv\left(\phi+\frac{2}{n}\divv \pi\id\right),\quad
w=\nabla\times \psi,\quad
m=-\partial_t \divv \left(\phi+\frac{2}{n}\divv \pi\id\right),\\
e=\divv \partial_{tt}\pi,\quad
\sigma=\partial_{tt} \phi,\quad
h = -\partial_{ttt}\pi,
\end{gather*}
will solve \eqref{eq:linear_system}. Let us explain how using $\nabla\times$ instead of $\nabla^\perp$ affects the construction, the rest of the adaptations are straightforward in Case 1.

First, to achieve $\bar w$, we set
$$\psi_N(x,t)=(\bar w\times\xi)\frac{1}{N}S''(N(\xi,c)\cdot(x,t)).$$

Then, we fix the error in $\bar h$ in the following way. Complete $\xi$ to an orthonormal basis $(\zeta_1,\zeta_2,\zeta_3)$ of $\R^3$ with $\zeta_1=\xi$. We then have $\bar h=(\bar h \cdot \zeta_1)\zeta_1+(\bar h \cdot \zeta_2)\zeta_2+(\bar h \cdot \zeta_3)\zeta_3$, and
\begin{align*}
\zeta_3=\xi\times \zeta_2,\quad \zeta_2=-\xi\times \zeta_3.
\end{align*}
Since for any smooth $\alpha:\R^{3+1}\to\R^3$, $(0,0,0,\nabla\times\alpha,0,0)$ solves \eqref{eq:linear_system}, we may use
\begin{align*}
\alpha_N(x,t)=((\bar h \cdot \zeta_3)\zeta_2-(\bar h \cdot \zeta_2)\zeta_3)\frac{1}{N}S''(N(\xi,c)\cdot(x,t))
\end{align*}
to achieve the desired correction of $\bar h$.

Finally, Case 2 can be treated in higher dimensions as follows. Instead of $\tilde D$, we use potentials analogous to the construction in \cite{DeL-Sz-Annals}. Instead of \eqref{eq:perps}, we only have that $\bar w$, $\bar m$, $\bar h$ are in the plane perpendicular to $\xi$, however we can still achieve these directions similarly to our construction above for Case 1, $n=3$.
\end{proof}

\subsection{The unbounded and bounded convex hulls}\label{sec:convex_hull_unconstraint}
Let $q:\mathscr{D}\rightarrow\R$, $(y,t)\mapsto q(y,t)$ be a given pressure function. We recall that the set of nonlinear constraints $K_{(y,t)}$ has been defined in \eqref{eq:nonlinear_constraints} as the set of tuples $z=(\mu,w,m,h,\sigma,e)\in Z$ satisfying
\begin{align*}
    \mu\in\set{\rho_-,\rho_+},\quad m=\mu w,\quad h=(e+q(x,t))w,\quad \mu w\otimes w-\sigma=\frac{2}{n}e\id.
\end{align*}
We point to \eqref{eq:matrix} for the definition of the matrix $\tilde{M}(z)$, and define
the open set
\begin{align*}
U:=\left\{z\in Z:\ \mu\in(\rho_-,\rho_+),~\lamax(\tilde M (z))<\frac{2}{n} e \right\}.
\end{align*}
One of the main goals of this subsection is the following characterization of the ($\Lambda$-)convex hull of $K_{(y,t)}$. The abstract definition of the $\Lambda$-convex hull can be found in Section \ref{sec:segments}. 

\begin{proposition}\label{prop:hull}
There holds \begin{align*}
K_{(y,t)}^\Lambda=K_{(y,t)}^{co}=\overline U.\end{align*}
\end{proposition}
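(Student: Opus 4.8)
The plan is to prove the two non-obvious inclusions $K_{(y,t)}^{co}\subseteq\overline U$ and $\overline U\subseteq K_{(y,t)}^\Lambda$; since $K_{(y,t)}^\Lambda\subseteq K_{(y,t)}^{co}$ holds for any set, these close up into the cycle $\overline U\subseteq K^\Lambda\subseteq K^{co}\subseteq\overline U$, forcing all three to coincide (and, en passant, showing $K^{co}$ is closed). Fix $(y,t)$, write $K=K_{(y,t)}$ and $q=q(y,t)$. The computational backbone is a Schur-complement identity: for $z$ with $\rho_-<\mu<\rho_+$, setting $\theta:=\frac{\mu-\rho_-}{\rho_+-\rho_-}$, $b_+:=m-\rho_-w$, $b_-:=\rho_+w-m$, $w_+:=\frac{b_+}{\mu-\rho_-}$, $w_-:=\frac{b_-}{\rho_+-\mu}$, a direct expansion of \eqref{eq:matrix} gives
\begin{align*}
\tilde M(z)+\sigma=\frac{\rho_+}{\rho_+-\rho_-}\frac{b_+\otimes b_+}{\mu-\rho_-}+\frac{\rho_-}{\rho_+-\rho_-}\frac{b_-\otimes b_-}{\rho_+-\mu}=(1-\theta)\rho_-w_-\otimes w_-+\theta\rho_+w_+\otimes w_+,
\end{align*}
so that $\tfrac12\tr(\tilde M(z)+\sigma)=(1-\theta)T_-(z)+\theta T_+(z)=:e^\ast$, with $T_\pm$ from \eqref{eq:traces}; in particular $z\in U$ iff $\rho_-<\mu<\rho_+$ and $P:=\tfrac{2}{n}e\,\id-\tilde M(z)\succ0$.

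For $K^{co}\subseteq\overline U$, I would first note that $U$ is open and that $P$, as a function of $z$, is the affine map $\tfrac{2}{n}e\,\id+\sigma$ minus an operator-convex one (each summand above is $\propto\frac{b_\pm\otimes b_\pm}{s_\pm}$ with $b_\pm$ and $s_\pm$ affine in $z$, $s_\pm>0$ on the slab, and $(b,s)\mapsto\frac{b\otimes b}{s}$ operator convex by the Schur-complement criterion), so $\{P\succ0\}$ is convex and hence so are $U$ and $\overline U$. Next, $K\subseteq\overline U$: for $z\in K$ with $\mu=\rho_+$ and velocity $w_0$ (the case $\rho_-$ being symmetric), the states $z_\varepsilon=(\rho_+-\varepsilon,\,w_0,\,(\rho_+-\varepsilon)w_0,\,(e_\varepsilon+q)w_0,\,\sigma_\varepsilon,\,e_\varepsilon)$ with $e_\varepsilon=\tfrac12(\rho_+-\varepsilon)\abs{w_0}^2+\varepsilon$ and $\sigma_\varepsilon=(\rho_+-\varepsilon)w_0\otimes w_0-\tfrac{1}{n}(\rho_+-\varepsilon)\abs{w_0}^2\id$ have $w_\pm=w_0$, hence $\tilde M(z_\varepsilon)=\tfrac{1}{n}(\rho_+-\varepsilon)\abs{w_0}^2\id\prec\tfrac{2}{n}e_\varepsilon\id$, i.e. $z_\varepsilon\in U$, and $z_\varepsilon\to z$. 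As $\overline U$ is a closed convex set containing $K$, it contains $K^{co}$.

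For $\overline U\subseteq K^\Lambda$ it suffices to show $U\subseteq K^\Lambda$ and pass to closures (the $\Lambda$-convex hull being closed, or by exhausting $K$ by the compacta $K\cap\{\abs w\le R\}$, whose hulls are the corresponding truncations of $\overline U$). Fix $z\in U$; then $e>e^\ast$ and $P\succ0$. Let $z_\pm^{(0)}\in K$ be the pure states with density $\rho_\pm$, velocity $w_\pm$, momentum $\rho_\pm w_\pm$, energy $T_\pm(z)$, stress $\rho_\pm w_\pm\otimes w_\pm-\tfrac{2}{n}T_\pm(z)\id$ and flux $(T_\pm(z)+q)w_\pm$, and let $\tilde z_\pm$ arise from $z_\pm^{(0)}$ by adding the \emph{same} matrix $P$ to the full stress, the \emph{same} scalar $\tfrac12\tr P=e-e^\ast$ to the energy, and the \emph{same} vector $r:=h-(1-\theta)(T_-(z)+q)w_--\theta(T_+(z)+q)w_+$ to the flux (so the traceless part $\sigma$ is preserved). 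Using the identity one checks directly that $z=(1-\theta)\tilde z_-+\theta\tilde z_+$, that $\tilde z_+-\tilde z_-=z_+^{(0)}-z_-^{(0)}$ (the $P$'s, the energy bumps and the $r$'s cancel), and that, choosing $0\neq\xi\perp(w_+-w_-)$ (possible since $n\ge2$) and $c:=-\xi\cdot w_+=-\xi\cdot w_-$, all defining equations of the wave cone \eqref{eq:wave_cone} for $z_+^{(0)}-z_-^{(0)}$ reduce to identities among $w_\pm$, so $z_+^{(0)}-z_-^{(0)}\in\Lambda$. It then remains to see $\tilde z_\pm\in K^\Lambda$: each $\tilde z_\pm$ has constant density $\rho_\pm$ and full stress $\rho_\pm w_\pm\otimes w_\pm+P\succ\rho_\pm w_\pm\otimes w_\pm$ strictly, so this is the fixed-density statement
\begin{align*}
\set{(\rho,w,\rho w,h,\sigma,e):\ \sigma+\tfrac{2}{n}e\,\id\succ\rho w\otimes w}\subseteq K^\Lambda,\qquad \rho\in\{\rho_-,\rho_+\},
\end{align*}
which I would prove in the spirit of the homogeneous Euler case \cite{DeL-Sz-Annals}: any segment between two density-$\rho$ points of $K$ is a $\Lambda$-segment (the $\bar\mu=0$ slice of \eqref{eq:wave_cone}), iterated $w$-oscillations relax $\sigma+\tfrac{2}{n}e\,\id=\rho w\otimes w$ to $\succ\rho w\otimes w$, and $h$ becomes free on this open set because a $\Lambda$-direction only fixes $\bar h\cdot\xi$ while leaving the $\xi^\perp$-component of $\bar h$ arbitrary (realized through the corrective potentials $\breve D(\alpha)$, resp.\ $\nabla\times\alpha$, of Lemma \ref{lem:locpw}). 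Thus $z$ lies on a $\Lambda$-segment with endpoints in $K^\Lambda$, so $z\in K^\Lambda$.

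I expect the main obstacle to be the fixed-density sub-hull together with the bookkeeping that keeps this two-layer lamination inside $\overline U$: one must show that the positive-definite residual $P=\tfrac{2}{n}e\,\id-\tilde M(z)$ — which by the Schur-complement identity is exactly the gap between the averaged pure-phase stress and the actual stress of $z$ — can be absorbed in full by $\Lambda$-admissible $w$-oscillations that never leave $\overline U$, and in particular that the order-$4$ energy flux $h=(\tfrac12\mu\abs w^2+q)w$ really becomes a free coordinate after lamination. This is the point where the present argument must depart from the strategy of \cite{GK-EE}; the algebraic identity for $\tilde M(z)$ is the single lever that makes the convexity of $U$ and the exactness of the phase split hold simultaneously, after which the remainder is a careful but conceptually standard iterated-lamination argument in the Tartar framework.
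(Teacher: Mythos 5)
Your overall architecture is the same as the paper's: you prove the cycle $\overline U\subseteq K^\Lambda\subseteq K^{co}\subseteq\overline U$, and your pure-phase split $z=(1-\theta)\tilde z_-+\theta\tilde z_+$ with endpoints having density $\rho_\pm$, velocity $w_\pm=\frac{m-\rho_\mp w}{\mu-\rho_\mp}$ and stress inflated by $P=\frac{2}{n}e\,\id-\tilde M(z)$ is \emph{exactly} the paper's Muskat segment $z_\pm=z+(\rho_\pm-\mu)\tilde z(z)$ in Proposition \ref{prop:condition_for_being_in_K_gamma_hull2}, just written with the endpoints instead of the direction. Your Schur-complement identity $\tilde M(z)+\sigma=(1-\theta)\rho_-w_-\otimes w_-+\theta\rho_+w_+\otimes w_+$ is correct and nicely makes the operator convexity of $\tilde M$ transparent via $(b,s)\mapsto\frac{b\otimes b}{s}$; the paper instead cites \cite[Lemma 4.3]{GKSz} for the scalar convexity of $\lamax(\tilde M)$, so this part is a modest streamlining rather than a different route.

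The genuine gap is in the fixed-density sub-hull. Your justification that ``$h$ becomes free on this open set because a $\Lambda$-direction only fixes $\bar h\cdot\xi$ while leaving the $\xi^\perp$-component of $\bar h$ arbitrary'' does not hold up: the pure flux direction $(0,0,0,\bar h,0,0)$ is deliberately excluded from $\Lambda$ in \eqref{eq:wave_cone} by the requirement $(\bar m,\bar\mu,\bar e)\neq 0$, so one cannot perturb $h$ alone. Moreover, in the Step-1 directions (Lemma \ref{lem:step1}) that absorb the stress gap $P$, the admissible freedom in $\bar h$ lives only in the plane $\xi^\perp$ (which in $n=2$ is just the span of $\bar w=\phi_n$), and after the lamination one must arrive exactly at $h'=(e'+q)w'$ at the $K$-endpoints --- a constraint that cannot be met for arbitrary initial $h$ by that one-parameter family. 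The paper's actual mechanism (Lemma \ref{lem:dis}) is different: it uses a direction with $\bar\mu=0$ but $\bar e=1$, $\bar w\parallel(h-(e+q)w)$ and a matched $\bar h$ and $\bar\sigma$, so that $h-(e+q)w$ vanishes quadratically at the two segment endpoints while $e$ and the stress shift; the strict inequality $\lamax(M_0)<0$ is then exactly what is needed to absorb those shifts, and the $\leq 0$ version follows only afterwards by closedness of $K^\Lambda$. Your $P\succ 0$ does supply the required strictness, so the statement you want is true (it is Corollary \ref{cor:unbounded_case1}), but the argument you sketch for it would not go through as written --- you would need to replace the ``free $\bar h$'' reasoning with a Lemma-\ref{lem:dis}-type coupling of $\bar h$ to $\bar e\neq 0$.
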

Note that the relaxation $\overline{U}$ in the here considered unbounded case is independent of the considered pressure function $q(y,t)$.

We establish Proposition \ref{prop:hull} by iteratively relaxing the conditions of $K_{(y,t)}$. We begin at fixed $\mu$ with the relaxation of the homogeneous Euler equations, then, still at fixed $\mu$, apply the relaxation of the condition involving $h$, and in a final step relax the conditions involving $\mu$ and $m$.

By means of Proposition \ref{prop:hull} we obtain the full relaxation of globally dissipative Euler flows, in the sense that any sort of weak limit takes values in the set computed in Proposition \ref{prop:hull}, see Proposition \ref{prop:weak_limits}. In order to get a converse convex integration statement by means of a Baire category argument in the Tartar framework, we introduce similarly to \cite{GK-EE} for the homogeneous Euler equations and \cite{Markfelder_EE} for the compressible Euler equations, an $L^\infty$-bound on $e$. That is we consider  
\begin{equation}\label{eq:nonlinear_constraints_with_gamma}
K_{\gamma,(y,t)}:=\set{z\in K_{(y,t)}:e\leq \gamma}
\end{equation}
for a fixed $\gamma>0$. It is easy to check that any $z\in K_{\gamma,(y,t)}$ is bounded by means of $\rho_-$, $\rho_+$, $\gamma$ and $q(y,t)$, cf. \cite[Lemma 3(iii)]{DeL-Sz-Adm} for the part addressing $\sigma$. Therefore, we are also interested in computing $K_{\gamma,(y,t)}^\Lambda,$ or at least a large enough subset thereof.

Before starting to compute the hulls, we begin with some basics about $\Lambda$, $K_{(y,t)}$ and $K^\Lambda_{(y,t)}$.

In the following we will fix $q=q(y,t)$ and drop the $(y,t)$ dependence in our notation.

\subsubsection{Step 0. - Preliminaries}\label{sec:segments}

For $K'\subset Z$ the associated $\Lambda$-convex hull $(K')^{\Lambda}$ is defined as the set of points $z\in Z$ such that for all $\Lambda$-convex functions $f:Z\rightarrow\R$ there holds $f(z)\leq \sup_{z'\in K'}f(z')$, see \cite{Kirchheim,Matousek_Plechac_1998}.

As indicated, the proof of Proposition \ref{prop:hull} is based on the computation of $\Lambda$-segments starting in $K$, which relies on the fact that if we define the set of first order $\Lambda$-segments 
\[
(K')^{\Lambda,1}:=K'\cup \set{sz_1+(1-s)z_2:z_1,z_2\in K',~s\in[0,1],~z_1-z_2\in\Lambda}
\]
of a set $K'\subset Z$, then for any $K''\subset (K')^\Lambda$ there holds $(K'')^{\Lambda,1}\subset (K')^\Lambda$.

In Lemma \ref{lem:wave_cone_spans_Z} below we will quickly verify that our wave cone $\Lambda$, recall \eqref{eq:wave_cone} for the definition, spans all of $Z$. As a consequence $(K')^\Lambda$ is a closed set for any $K'\subset Z$, cf. \cite[Corollary 2.4]{Matousek_Plechac_1998}. Thus we are allowed to take closures in the process of computing $K^{\Lambda}$.

\begin{lemma}\label{lem:wave_cone_spans_Z}
The linear span of $\Lambda$ is $Z$.
\end{lemma}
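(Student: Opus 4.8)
The plan is to exhibit, for each of the six coordinate blocks of $Z=\R\times\R^n\times\R^n\times\R^n\times\cS_0^{n\times n}\times\R$, enough explicit elements of $\Lambda$ to span that block, which then together span all of $Z$. Recall that $\bar z=(\bar\mu,\bar w,\bar m,\bar h,\bar\sigma,\bar e)\in\Lambda$ means there exists $0\neq(\xi,c)\in\R^n\times\R$ lying in the kernel of the $(n+3)\times(n+1)$ matrix in \eqref{eq:wave_cone}, together with the nondegeneracy requirement $(\bar m,\bar\mu,\bar e)\neq 0$; unraveling the kernel condition, this is the system $(\bar\sigma+\tfrac2n\bar e\,\id)\xi+c\bar m=0$, $\bar w\cdot\xi=0$, $\bar m\cdot\xi+c\bar\mu=0$, $\bar h\cdot\xi+c\bar e=0$. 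Since $\Lambda$ is a cone (closed under scalar multiples) but not a subspace, it suffices to produce spanning families inside $\Lambda$.

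First I would handle the ``easy'' directions using $c=0$. Fix a unit vector $\xi$. Taking $c=0$ forces, from the four scalar/vector equations, only that $\bar w\perp\xi$, $\bar m\perp\xi$, $\bar h\perp\xi$, and $(\bar\sigma+\tfrac2n\bar e\,\id)\xi=0$; $\bar\mu$ and $\bar e$ are unconstrained. So: (i) choosing $\bar w=\eta$ for any $\eta\perp\xi$ and all other components zero except we may also freely set $\bar\mu\neq 0$ gives, after letting $\xi$ vary over a basis and $\eta$ over its orthogonal complement, all of the $\bar w$-block as well as the $\bar\mu$-block (the latter by taking $\bar\mu=1$, everything else $0$, which is in $\Lambda$ for any $\xi$ since then $(\bar m,\bar\mu,\bar e)=(0,1,0)\neq0$). (ii) Likewise the $\bar m$-block and $\bar h$-block are spanned by elements with $\bar m=\eta\perp\xi$ (resp.\ $\bar h=\eta\perp\xi$) and all else zero; these satisfy the nondegeneracy condition via the $\bar m$ (resp., after noting we may simultaneously switch on $\bar\mu$ or $\bar e$, the $\bar e$) component. (iii) For $\bar e$, take $\bar e=1$ and choose $\bar\sigma=-\tfrac2n\,\xi\otimes\xi+(\text{traceless correction})$ so that $(\bar\sigma+\tfrac2n\id)\xi=0$; concretely $\bar\sigma=\tfrac2n(\tfrac1n\id-\xi\otimes\xi)$ works and is traceless, giving an element of $\Lambda$ whose $\bar e$-component is nonzero. (iv) For the $\bar\sigma$-block (dimension $\tfrac{n(n+1)}2-1$): with $c=0$, $\bar e=0$ we need traceless symmetric $\bar\sigma$ with $\bar\sigma\xi=0$; letting $\xi$ range over a basis, the matrices $e_i\otimes e_i-e_j\otimes e_j$ (kill by choosing $\xi=e_k$, $k\neq i,j$; needs $n\geq 3$) and for the off-diagonal entries $e_i\otimes e_j+e_j\otimes e_i$ (choose $\xi=e_k$, $k\neq i,j$) together span $\cS_0^{n\times n}$ — but for $n=2$ one diagonal direction $\diag(1,-1)$ cannot be obtained this way since $\bar\sigma\xi=0$ for $\xi=e_1$ or $e_2$ forces $\bar\sigma=0$; here one instead uses $c\neq0$, e.g.\ $\bar\sigma=\diag(1,-1)$, $\xi=e_1$, and solves for $\bar m=-\tfrac1c\bar\sigma e_1$, choosing $\bar\mu,\bar e,\bar h$ to satisfy the remaining three equations (pick $\bar e=0$, $\bar\mu=-\tfrac1c\bar m\cdot\xi$, $\bar h$ with $\bar h\cdot\xi=0$), which is clearly solvable.

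I would then simply concatenate: the collection of all the above vectors lies in $\Lambda$ and its linear span contains each of the six blocks, hence equals $Z$. The main (and only mild) obstacle is the $n=2$ case of the $\bar\sigma$-block, where the $c=0$ construction degenerates and one must invoke a $c\neq0$ element as just described; everything else is a direct verification that the chosen tuples satisfy the four kernel equations and the nondegeneracy clause. No delicate estimates are involved — this is purely linear algebra, and the statement is essentially the assertion that $\cA$ is a genuinely ``mixing'' operator, i.e.\ its symbol has no direction that is obstructed for all $(\xi,c)$.
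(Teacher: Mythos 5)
Your block-by-block strategy is sound in outline, but as written two of the constructions do not produce genuine elements of $\Lambda$. For the $\bar e$-block (iii), the claimed $\bar\sigma=\frac{2}{n}\left(\frac{1}{n}\id-\xi\otimes\xi\right)$ is indeed traceless, but for $|\xi|=1$ one computes $(\bar\sigma+\frac{2}{n}\id)\xi=\frac{2}{n^2}\xi\neq0$, so the kernel condition in \eqref{eq:wave_cone} fails; what you actually need is $\bar\sigma\xi=-\frac{2}{n}\xi$, e.g.\ $\bar\sigma=-\frac{2}{n}\xi\otimes\xi+\frac{2}{n(n-1)}(\id-\xi\otimes\xi)$. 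For the $\bar\sigma$-block (iv), the tuples $(0,0,0,0,\bar\sigma,0)$ with $c=0$ have $(\bar m,\bar\mu,\bar e)=(0,0,0)$ and are therefore excluded from $\Lambda$ by the very nondegeneracy clause you invoke in (i)--(ii); the fix is to also set $\bar\mu=1$ (unconstrained when $c=0$) and subtract the $\bar\mu$-direction afterwards. A smaller inaccuracy in (ii): for the $\bar h$-block you say one may ``switch on $\bar\mu$ or $\bar e$'' to enforce nondegeneracy, but with $c=0$ and $\bar\sigma=0$ the first row of the symbol matrix forces $\frac{2}{n}\bar e\,\xi=0$, so $\bar e\neq0$ is not available; only $\bar\mu\neq 0$ works. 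Relatedly, for $n=2$ not only $\diag(1,-1)$ but in fact \emph{every} nonzero traceless symmetric $2\times2$ matrix is invertible, so the whole $\cS_0^{2\times2}$ block needs the $c\neq0$ treatment, not just the diagonal direction.

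The paper's proof sidesteps all of these pitfalls by bundling $\bar\sigma$ and $\bar e$: it exhibits the single family $(0,0,0,0,\bar\sigma,\bar e)$ with $\bar\sigma\neq0$ and $-\frac{2}{n}\bar e$ an eigenvalue of $\bar\sigma$, and then observes that any nonzero traceless symmetric matrix has eigenvalues of both signs, so $\bar e$ can always be chosen nonzero (of either sign). This makes the nondegeneracy constraint automatic, requires no separate $n=2$ argument, and spans $\cS_0^{n\times n}\times\R$ in one stroke. That eigenvalue observation is the one piece of genuine content in the lemma, and it is precisely what your more granular, coordinate-by-coordinate attempt was missing.
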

\begin{proof}
The following $\bar{z}=(\bar{\mu},\bar{w},\bar{m},\bar{h},\bar{\sigma},\bar{e})$ are contained in $\Lambda$:
\begin{gather*}
    (\bar{\mu},0,0,0,0,0),~\bar{\mu}\neq 0,\\
    (0,0,\bar{m},0,0,0),~
    (0,\bar{m},\bar{m},0,0,0),~
    (0,0,\bar{m},\bar{m},0,0),~\bar{m}\neq 0,\\
    (0,0,0,0,\bar{\sigma},\bar{e}),~\bar{\sigma}\neq 0,~-\frac{2}{n}\bar{e}\text{ eigenvalue of }\bar{\sigma}.
\end{gather*}
The first four vectors clearly generate any $z$ of the form $z=(\mu,w,m,h,0,0)$. Moreover, since any trace-free, symmetric $\bar{\sigma}\neq 0$ has at least one positive and one negative eigenvalue, the vectors stated in the last line generate any $z$ of the form $(0,0,0,0,\sigma,e)$.
\end{proof}

The next preliminary statement addresses the size of $\Lambda$ with respect to the constraints $K$. It will be used 
for the convex integration sketched in Section \ref{sec:conclusion}.

 \begin{lemma}\label{lem:bigg_cone}
 For any $z_1,z_2\in K$ with 
 $z_1\neq z_2$, we have $\bar{z}=z_2-z_1\in\Lambda$.
 \end{lemma}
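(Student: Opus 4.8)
The plan is to show that for any two distinct points $z_1,z_2\in K$, the difference $\bar z=z_2-z_1$ lies in the wave cone $\Lambda$ by explicitly exhibiting a nonzero vector $(\xi,c)$ in the relevant kernel, and checking the side condition $(\bar m,\bar\mu,\bar e)\neq 0$. First I would split into two cases according to whether $z_1$ and $z_2$ have the same value of $\mu$ or not.

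\textbf{Case $\mu_1=\mu_2=:\mu$.} Here $\bar\mu=0$, and since $m_i=\mu w_i$, $h_i=(e_i+q)w_i$, $\mu w_i\otimes w_i=\sigma_i+\frac2n e_i\,\id$, the difference satisfies $\bar m=\mu\bar w$, $\bar\sigma+\frac2n\bar e\,\id=\mu(w_1\otimes w_1-w_2\otimes w_2)$ and $\bar h=(e_1+q)w_1-(e_2+q)w_2$. This is exactly the homogeneous-Euler situation (with density $\mu$) augmented by the energy block, and one expects the classical choice: take $\xi$ in the direction that diagonalizes the rank-$\le 2$ symmetric matrix $w_1\otimes w_1-w_2\otimes w_2$, i.e. $\xi$ spanning the plane of $w_1,w_2$ and a particular combination killing the quadratic form, together with $c=-\mu\,\xi\cdot\bar w$ or similar. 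Concretely, when $w_1\neq w_2$ one can mimic the standard computation (as in the localized plane wave Lemma, Case 1) to produce $(\xi,c)$ in the kernel of the $4\times$-block matrix in \eqref{eq:wave_cone}; the rows involving $\bar\sigma+\bar e\,\id$, $\bar w$, $\bar m$ and $\bar h$ are all handled because $\bar w,\bar m$ are parallel and the matrix $\bar\sigma+\bar e\,\id$ has the product structure $\mu(w_1\otimes w_1-w_2\otimes w_2)$. When $w_1=w_2$ but $e_1\neq e_2$ (so $\bar m=0$, $\bar\mu=0$, $\bar e\neq 0$, $\bar\sigma=-\frac2n\bar e\,\id$, $\bar h=\bar e\,w$), one instead picks $\xi\perp w$ with $c=0$: then $\bar w^T\xi=0$, $\bar m^T\xi+\bar\mu c=0$, $(\bar\sigma+\bar e\,\id)\xi=0$, $\bar h^T\xi+\bar e c=\bar e(w\cdot\xi)=0$, so $(\xi,0)$ is in the kernel; the side condition holds because $\bar e\neq 0$.

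\textbf{Case $\mu_1\neq\mu_2$.} Now $\bar\mu\neq 0$, so the side condition $(\bar m,\bar\mu,\bar e)\neq0$ is automatic, and the task is purely to find $(\xi,c)\neq 0$ annihilating the four rows. This is the part I expect to be the main obstacle: the constraints are genuinely nonlinear in $\mu$ (the matrix $\mu w\otimes w$ and the flux $h=(e+q)w$ with $e=\frac12\mu|w|^2$), so there is no shortcut via linearity. The natural strategy is to look for a one-dimensional oscillation direction: we want scalars so that $z_1$ and $z_2$ are connected by a plane wave, which by the structure of Lemma~\ref{lem:locpw} amounts to finding $(\xi,c)$ with $|\xi|=1$ such that the "jump relations" $\bar m = -c^{-1}(\bar\sigma+\bar e\,\id)\xi$ (times appropriate scalar), $c\bar\mu=-\xi\cdot\bar m$, $c\bar e = -\xi\cdot\bar h$, $\xi\cdot\bar w=0$ hold. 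I would try the ansatz that the oscillation is along the direction $\xi$ perpendicular to the velocity jump combined with... actually, the cleanest route is: pick $\xi$ so that $\bar w\cdot\xi=0$ (possible as long as $\bar w\neq0$; if $\bar w=0$ handle separately), and then check whether the remaining three scalar/vector conditions can be met by choosing $c$; the computation should reduce, using $m_i=\mu_i w_i$, $h_i=(\frac12\mu_i|w_i|^2+q)w_i$, $\sigma_i=\mu_i w_i\otimes w_i-\frac2n e_i\,\id$, to a genuine algebraic identity. Alternatively — and this is probably how the authors do it — one observes that $K$ sits inside $\overline U=K^\Lambda$ (Proposition~\ref{prop:hull}), and along any $\Lambda$-segment between two points of $K$ the segment must actually lie in $\overline U$; combined with the explicit description of $U$ via $\tilde M$ and the trace identity $e=\frac12\mu|w|^2$ holding on $K$ (where $\lamax(\tilde M(z))=\frac2n e$ on $\partial U$... but $K$ itself is where $\mu\in\{\rho_\pm\}$, the extreme faces), one can argue that the difference of any two boundary points of this special type is a wave cone direction. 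So the fallback plan, if the direct computation is messy, is to use the already-established hull characterization rather than a bare-hands kernel computation.

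In summary: the proof is a case analysis $\mu_1=\mu_2$ versus $\mu_1\neq\mu_2$, in each case producing an explicit $(\xi,c)$ in the kernel defining $\Lambda$, with the $\mu_1=\mu_2$ case being the homogeneous-Euler-plus-energy computation already implicit in Lemma~\ref{lem:locpw}, and the $\mu_1\neq\mu_2$ case requiring either a short algebraic verification exploiting that $m=\mu w$ and $h=(e+q)w$ with $e=\frac12\mu|w|^2$ on $K$, or an appeal to Proposition~\ref{prop:hull}. The genuine difficulty is the mixed case, because the nonlinearity in $\mu$ obstructs the usual linear-algebra argument, so care is needed to verify the jump conditions are simultaneously solvable; I would double-check the degenerate sub-cases ($\bar w=0$, or $w_1,w_2$ colinear) separately, since those are where a naive choice of $\xi$ could fail.
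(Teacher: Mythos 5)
Your overall strategy — case split on whether $\mu_1=\mu_2$, then exhibit $(\xi,c)$ with $\xi\in\bar w^\perp$ in the kernel — is the right one and matches the paper. But the proposal stops short of a proof in exactly the place you flag as the main obstacle, and the fallback you offer there does not work.

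In Case $\mu_1=\mu_2$ your sub-case ``$w_1=w_2$ but $e_1\neq e_2$'' is vacuous: on $K$ one has $e=\tfrac12\mu|w|^2$, so equal $\mu$ and equal $w$ force equal $e$ (and then $m,\sigma,h$ too, hence $z_1=z_2$). The paper simply notes $\bar w\neq 0$ and proceeds. In the genuine sub-case you then wave at ``the standard computation'' without doing it; the paper takes any $\xi\in\bar w^\perp$ and checks $(\bar\sigma+\tfrac2n\bar e\,\id)\xi=\rho_+(w_1\cdot\xi)\bar w$ and $\bar h\cdot\xi=(w_1\cdot\xi)\bar e$, giving $c=-w_1\cdot\xi$.

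The serious gap is Case $\mu_1\neq\mu_2$. You correctly identify it as the hard case, but you don't produce the computation, and your proposed fallback — deducing $z_2-z_1\in\Lambda$ from $K^\Lambda=\overline U$ — is logically flawed. Knowing the $\Lambda$-convex hull of $K$ tells you which functions are $\Lambda$-convex majorants, not that the difference of two elements of $K$ is a wave direction; the two statements are independent. (It would also be circular in spirit, since Lemma~\ref{lem:bigg_cone} feeds, via Corollary~\ref{cor:seg}, into the very Baire/Tartar machinery the hull picture is built for.) The missing idea is the algebraic identity
\begin{equation*}
\bar\mu\bigl(\bar\sigma+\tfrac2n\bar e\,\id\bigr)=\bar m\otimes\bar m-\rho_-\rho_+\,\bar w\otimes\bar w,
\end{equation*}
which follows directly from $m_i=\mu_i w_i$, $\sigma_i+\tfrac2n e_i\,\id=\mu_i w_i\otimes w_i$ with $\{\mu_1,\mu_2\}=\{\rho_-,\rho_+\}$. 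With $\xi\in\bar w^\perp$ this gives $(\bar\sigma+\tfrac2n\bar e\,\id)\xi+c\,\bar m=0$ for $c=-\tfrac1{\bar\mu}\bar m\cdot\xi$, makes $\bar m\cdot\xi+c\,\bar\mu=0$ automatic, and one then verifies $\bar h\cdot\xi+c\,\bar e=0$ from $\bar h\cdot\xi=(w_1\cdot\xi)\bar e$ together with $w_1\cdot\xi=\tfrac1{\bar\mu}\bar m\cdot\xi$ (from $\rho_+w_2-\rho_-w_1=\rho_+\bar w+\bar\mu w_1$). This is precisely the ``genuine algebraic identity'' you anticipate but do not supply; without it the proof of the hard case is absent.
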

 \begin{proof}
 \textbf{Case 1.} $\mu_1=\mu_2$.
 
 Since $z_i\in K$, without loss of generality we have $$z_i=(\rho_+,w_i,\rho_+w_i,(e_i+q)w_i,\rho_+ w_i\otimes w_i-(2/n) e_i\id ,e_i),$$ $e_i=\rho_+\frac{1}{2}\abs{w_i}^2$, $i=1,2$, and therefore $\bar{w}\neq 0$. In particular we get $\bar \mu=0$ and $\bar m =\rho_+\bar w$.
  
  Let $\xi\in \bar{w}^\perp$.
 If $\bar{e}=0$, then all that needs to be checked is that $\bar{\sigma}\xi=c\rho_+\bar{w}$, for some $c\in\R$. There holds
 \begin{align*}
 \bar{\sigma}\xi=\rho_+(w_2\otimes w_2-w_1\otimes w_1)\xi=\rho_+(w_2\otimes \bar{w}+\bar{w}\otimes w_1)\xi=\rho_+(w_1\cdot\xi)\bar{w},
 \end{align*}
 so $\bar{z}\in\Lambda$ follows.
 
  If $\bar{e}\neq 0$, we similarly obtain from  $z_1,z_2\in K$ that
  $$(\bar{\sigma}+(2/n)\bar{e}\id)\xi=\rho_+(w_2\otimes w_2-w_1\otimes w_1)\xi=\rho_+(w_1\cdot\xi)\bar{w},$$
  so it remains to check that $\bar{h}\cdot\xi=(w_1\cdot\xi)\bar{e}$ also holds. We have
  $$\bar{h}\cdot\xi=(e_2w_2-e_1w_1+q\bar{w})\cdot\xi=(e_2\bar{w}+\bar{e}w_1)\cdot\xi=(w_1\cdot\xi)\bar{e},$$
  the result then follows.
  
   \textbf{Case 2.} $\mu_1\neq \mu_2$. 
   
   Without loss of generality assume that $\mu_1=\rho_-$, $\mu_2=\rho_+$.
    Let $\xi\in \bar{w}^\perp$. Through a simple calculation one obtains that
    \begin{align*}
    \bar \mu (\bar \sigma + (2/n)\bar e \id)=\bar \mu (\mu_2 w_2\otimes w_2-\mu_1 w_1\otimes w_1)=\bar m \otimes \bar m -\rho_-\rho_+ \bar w\otimes \bar w.
    \end{align*}
    Hence, we have $ (\bar \sigma + (2/n)\bar e \id)\xi + c\bar m=0$ for $c=-\frac{1}{\bar \mu}\bar m\cdot \xi$. Therefore, $\bar m \cdot \xi + c \bar\mu=0$ is also satisfied automatically and there remains only to check $\bar h \cdot \xi + c \bar e=0$. As in the previous case, we have
    $$\bar{h}\cdot\xi=(e_2w_2-e_1w_1+q\bar{w})\cdot\xi=(e_2\bar{w}+\bar{e}w_1)\cdot\xi=(w_1\cdot\xi)\bar{e},$$
    and from 
    $\rho_+w_2-\rho_-w_1=\rho_+\bar w+\bar \mu w_1$
    it follows $w_1\cdot \xi=\frac{1}{\bar \mu}\bar m \cdot \xi$, which finishes the proof of the lemma.
 \end{proof}

Solely by this property we have the following result.
 \begin{corollary}\label{cor:seg}
 Let $K'\subset K$ be a compact set. For any $z\in\text{int} (K')^{co}$ there exists $\bar{z}\in \Lambda$ such that
 $$[z-\bar z,z+\bar z]\subset \text{int} (K')^{co}\text{ and }|\bar z|\geq\frac{1}{2N}d(z,K'),$$
 where $N=\text{dim}(Z)$ and $d$ is the Euclidean distance on $Z$.
 \end{corollary}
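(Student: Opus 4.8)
The plan is to derive the statement from Lemma~\ref{lem:bigg_cone}, which supplies a whole family of $\Lambda$-directions (namely every segment joining two distinct points of $K$), combined with a Carath\'eodory-type decomposition of $z$. First I would dispose of the degenerate case $z\in K'$: then $d(z,K')=0$, the lower bound is vacuous, and since $\Lambda$ spans $Z$ by Lemma~\ref{lem:wave_cone_spans_Z} and is invariant under nonzero rescalings, it contains a nonzero vector which, after shrinking, yields an admissible two-sided segment inside the open set $\text{int}(K')^{co}$. Hence I may assume $d:=d(z,K')>0$, so in particular $z\notin K'$.

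Since $K'$ is compact, $(K')^{co}$ is its (compact, hence closed) convex hull, and Carath\'eodory's theorem gives a representation $z=\sum_{i=1}^{M}\lambda_iz_i$ with pairwise distinct $z_i\in K'$, $\lambda_i>0$, $\sum_{i}\lambda_i=1$, and $M\le N+1$; moreover $M\ge2$ because $z\notin K'$. I would relabel the points so that $z_1$ carries the largest weight, $\lambda_1=\max_i\lambda_i$. Using $\sum_i\lambda_i=1$ one has the identity $z-z_1=\sum_{i=2}^{M}\lambda_i(z_i-z_1)$, whence, by the triangle inequality, the bound $M-1\le N$, and the fact that $z_1\in K'$ forces $\abs{z-z_1}\ge d$,
\[
\max_{2\le i\le M}\lambda_i\abs{z_i-z_1}\ge\frac1N\sum_{i=2}^{M}\lambda_i\abs{z_i-z_1}\ge\frac{\abs{z-z_1}}{N}\ge\frac{d}{N}.
\]
Fix an index $k\in\{2,\dots,M\}$ attaining the maximum on the left and set $\bar z:=\tfrac12\lambda_k(z_1-z_k)$. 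By Lemma~\ref{lem:bigg_cone} applied to the distinct points $z_1,z_k\in K'\subset K$ we have $z_1-z_k\in\Lambda$, and $\Lambda$ being scale-invariant (the kernel condition in \eqref{eq:wave_cone} is homogeneous and $(\bar m,\bar\mu,\bar e)\neq0$ is preserved under scaling) gives $\bar z\in\Lambda$ with $\abs{\bar z}=\tfrac12\lambda_k\abs{z_1-z_k}\ge d/(2N)$.

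It remains to check $[z-\bar z,z+\bar z]\subset\text{int}(K')^{co}$. Here I would observe that the two ``endpoints'' $z\pm\lambda_k(z_1-z_k)$ are still convex combinations of $z_1,\dots,z_M$: in $z+\lambda_k(z_1-z_k)$ the weight of $z_k$ drops to $0$ while that of $z_1$ rises to $\lambda_1+\lambda_k$, and in $z-\lambda_k(z_1-z_k)$ the weight of $z_1$ becomes $\lambda_1-\lambda_k\ge0$ (this is exactly where the choice of $z_1$ as a point of maximal weight is used) and that of $z_k$ becomes $2\lambda_k$; so both points lie in $(K')^{co}$. Since $z\in\text{int}(K')^{co}$ and the hull is convex, the line segment principle shows that the half-open segments from $z$ to each of these two points lie in $\text{int}(K')^{co}$, and $[z-\bar z,z+\bar z]$ is contained in their union. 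The main obstacle in the argument is precisely this balancing act: one needs a $\Lambda$-direction that is simultaneously long enough to beat $d/(2N)$ and short enough, once the relevant weights are taken into account, to admit a two-sided segment remaining in the open hull; selecting $z_1$ of maximal weight reconciles the two, since the distance estimate only used $z_1\in K'$ while $\lambda_1=\max_i\lambda_i$ forces $\min(\lambda_1,\lambda_k)=\lambda_k$.
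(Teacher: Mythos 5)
Your proof is correct and is essentially the argument the paper omits with a pointer to \cite{DeL-Sz-Adm} and \cite{GKSz}: Carath\'eodory decomposition of $z$ into at most $N+1$ points of $K'$, an averaging bound to pick a pair $(z_1,z_k)$ with $\lambda_k|z_1-z_k|\ge d/N$ and $\lambda_1\ge\lambda_k$, the line-segment principle, and Lemma~\ref{lem:bigg_cone} together with the homogeneity of $\Lambda$ to conclude $\tfrac12\lambda_k(z_1-z_k)\in\Lambda$. This is exactly the combination of ingredients the paper indicates, so nothing further is needed.
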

 The proof is the same as those of \cite[Lemma 6]{DeL-Sz-Adm}, respectively \cite[Lemma 4.9]{GKSz}, relying on Carath\'eodory's theorem and Lemma \ref{lem:bigg_cone} above, therefore we omit it.

\subsection{Computing the hulls}\label{sec:hulls}
Let $z\in Z$. For brevity we introduce also the notation
\begin{align}
\label{eq:M0}M_0(z)&:=\mu w\otimes w-\sigma-\frac{2}{n}e\id,\\
\label{eq:eta}\eta(z)&:=\frac{h-(e+q)w}{\abs{h-(e+q)w}},
\end{align}
the latter only in the case of $h\neq (e+q)w$.

\subsubsection{Step 1. - Homogeneous Euler relaxation}

In this step we implement a strategy similar to \cite{GKSz} for the mass and momentum parts of the relaxation. However, contrary to said paper, here we do not have boundedness, so we will work with laminates once more (instead of a generalized Krein-Milman argument).
More precisely, we have the following.

\begin{lemma}\label{lem:step1}
Let $z\in Z$ with $\mu\in\set{\rho_-,\rho_+}$, $m=\mu w$, $h=(e+q)w$ and $\lamax(M_0(z))\leq 0$. Then $z\in K^\Lambda$. If in addition $e\leq \gamma$, then $z\in K^\Lambda_\gamma$.
\end{lemma}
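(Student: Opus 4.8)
### Proof plan for Lemma \ref{lem:step1}

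The plan is to realize $z$ as the barycenter of a laminate (a $\Lambda$-convex combination built from iterated $\Lambda$-segments) whose extreme points all lie in $K$, hence conclude $z\in K^\Lambda$. Fix $\mu\in\{\rho_-,\rho_+\}$ throughout — say $\mu=\rho_+$, the other case being symmetric — so that by hypothesis $z=(\rho_+,w,\rho_+w,(e+q)w,\sigma,e)$ with $M_0(z)=\rho_+w\otimes w-\sigma-\tfrac{2}{n}e\id\preceq 0$. Note that the components $\mu$, $m=\mu w$, $h=(e+q)w$ are already in the ``correct'' affine relation to $w$ and $e$, and that the whole constraint set $K$ at fixed $\mu=\rho_+$ is, after this affine identification, nothing but the graph over $(w,e)$ of the homogeneous (constant density $\rho_+$) incompressible Euler constraint $\rho_+w\otimes w-\sigma-\tfrac{2}{n}e\id=0$, $e=\tfrac12\rho_+|w|^2$, together with $h=(e+q)w$. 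So the statement is essentially the homogeneous-Euler relaxation of De Lellis--Székelyhidi, but carried out with laminates rather than with the bounded Krein--Milman argument, and with the $h$- and $q$-bookkeeping carried along passively.

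The key steps, in order, are: (1) Check that the relevant $\Lambda$-segment directions are available. Concretely, for $z\in Z$ of the above form and any rank-one symmetric perturbation $\bar\sigma$ of $\sigma$ one can solve for a direction $\bar z\in\Lambda$ with prescribed $\bar w$ (along an eigenvector of the rank-one part), $\bar\mu=0$, $\bar m=\rho_+\bar w+\ldots$, $\bar e$, $\bar h$ determined accordingly; this is exactly the computation in Case 1 of Lemma \ref{lem:bigg_cone}, and it shows that any segment that keeps $\mu$ fixed and moves $(w,\sigma,e,m,h)$ compatibly with the homogeneous-Euler affine relations lies in $\Lambda$. (2) Run the standard splitting: writing $\lamax(M_0(z))\le 0$, one perturbs in a direction $\bar z$ so that $z\pm\bar z$ have $M_0$ with one more zero eigenvalue (or with $M_0$ ``more degenerate''), staying inside the region $\{\lamax(M_0)\le 0\}$; iterating along the eigendirections one reaches, in finitely many laminate generations, points with $M_0=0$ and $e=\tfrac12\rho_+|w|^2$, i.e. points of $K$. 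This is the usual argument that the $\Lambda$-convex hull of the homogeneous Euler constraint set is $\{\lamax(M_0)\le 0\}$; since $\Lambda$ spans $Z$ (Lemma \ref{lem:wave_cone_spans_Z}), $K^\Lambda$ is closed and we may take the closure/limit of the laminate. (3) The $h$-component: throughout the iteration $h$ is forced to remain equal to $(e+q)w$ along each branch (because the segment directions in step (1) respect $\bar h=(\bar e)w+(e+q)\bar w+q\bar w$-type relations — more precisely $h-(e+q)w$ stays zero), so no separate relaxation in $h$ is needed here; the honest $h$-relaxation is deferred to Step 2 of the paper. (4) For the bounded statement: if $e\le\gamma$, observe that the splitting in step (2) can be performed so that the $e$-coordinates of all laminate endpoints stay in $[0,\gamma]$ — one perturbs $e$ downward only as needed, or more simply notes that along a $\Lambda$-segment with fixed $\mu$ one controls $\bar e$ independently and can keep $e\le\gamma$ on both endpoints since the target points of $K$ have $e=\tfrac12\rho_+|w|^2\le$ (the value obtained in the limit, which one arranges to be $\le\gamma$). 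Hence all endpoints lie in $K_\gamma$ and $z\in K_\gamma^\Lambda$.

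The main obstacle I expect is step (2) carried out \emph{without} the a priori boundedness that the Krein--Milman/compactness argument of \cite{GKSz} relied on: one must produce an \emph{explicit} finite laminate (or a convergent sequence of laminates) rather than invoking compactness of $K'$ to extract extreme points. Concretely, the delicate point is to choose at each generation a $\Lambda$-segment $[z-\bar z,z+\bar z]\subset\{\lamax(M_0)\le 0, \mu=\rho_+\}$ long enough that after finitely many steps $M_0$ genuinely vanishes (not just in the limit), while simultaneously keeping the $e$-values bounded in $[0,\gamma]$ for the second assertion; this requires picking $\bar z$ along eigendirections of $M_0$ and tracking how the eigenvalues of $M_0(z\pm\bar z)$ change, which is where one must be careful that reducing one eigenvalue to zero does not push another above zero. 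Once this finite (or countable, with a convergence-of-laminates argument and closedness of $K^\Lambda$) construction is in place, the rest — verifying $\bar z\in\Lambda$ via the Case-1 computation of Lemma \ref{lem:bigg_cone}, and the passive bookkeeping of $m$, $h$, $q$ — is routine.
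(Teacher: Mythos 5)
Your plan matches the paper's proof: at fixed $\mu\in\{\rho_-,\rho_+\}$, iteratively split along $\Lambda$-segments in the eigendirections of $\mu w\otimes w-\sigma$ until $M_0$ vanishes, with $\bar\mu=0$, $\bar m=\mu\bar w$, $\bar h=(e+q)\bar w$ carried passively; the paper formalizes this via the strata $\tilde K_j$ (exactly $j$ strictly negative eigenvalues of $M_0$) and the inclusion $\tilde K_j\subset(\tilde K_{j-1})^{\Lambda,1}$.

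The two worries you flag are resolved more cleanly than you suggest. For the eigenvalue-interaction issue, the paper's explicit $\bar\sigma=\rho_+(\phi_n\otimes w+w\otimes\phi_n)-2\rho_+(w\cdot\phi_n)\phi_n\otimes\phi_n$ (with $\bar w=\phi_n$ the most-negative eigendirection) is designed precisely so that along the segment $\mu w_s\otimes w_s-\sigma_s$ changes only in the $\phi_n\otimes\phi_n$-component, leaving $\lambda_1,\dots,\lambda_{n-1}$ untouched; the quadratic $\lambda_n+2\rho_+(w\cdot\phi_n)s+\rho_+s^2=\tfrac{2}{n}e$ then supplies the two endpoints $s_1<0<s_2$ in $\tilde K_{j-1}$. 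For the $\gamma$-bound, no perturbation management is needed at all: the chosen direction has $\bar e=0$, so $e$ is constant along every generation of the laminate, and $e\leq\gamma$ propagates automatically to all endpoints, giving $z\in K_\gamma^\Lambda$ for free.
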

\begin{proof}
Let us define
\begin{align*}
\hat U:=\left\{z\in Z:\ \mu\in\{\rho_-,\rho_+\},\ m=\mu w,\ h=(e+q)w,\ \lamax(M_0(z)) \leq 0\right\},\end{align*}
and for $j=0,\ldots,n$ the sets
\begin{align*}
\tilde K_j:=\left\{z\in \hat U:\ M_0(z)\text{ has exactly }j\text{ eigenvalues strictly less than }0\right\}.
\end{align*}
Note that, by the definition of $\hat U$, the other eigenvalues must be equal to $0$. Furthermore, $\tilde K_0=K$ and $\hat U=\cup_{j=0}^n \tilde K_j.$

Let $j\geq 1$ and $z\in \tilde K_j$.
Consider the case $\mu=\rho_+$, and assume
\begin{align*}
\mu w\otimes w -\sigma = \sum_{i=1}^n \lambda_i \phi_i\otimes \phi_i,
\end{align*}
with
$\lambda_1\geq\ldots\geq \lambda_n$, and $\phi_1,\ldots,\phi_n$ an orthonormal basis of $\R^n$. Then by assumption we have $\lambda_i= \frac{2}{n}e$ for $i=1,\ldots,n-j$, and $\lambda_i<\frac{2}{n}e$ for $i=n-j+1,\ldots,n$.

Define $\bar z \in Z$ as follows. Let
\begin{align*}
\bar \mu = 0,\quad \bar w = \phi_n,\quad \bar m=\rho_+ \phi_n,\quad \bar h=(e+q)\phi_n,\quad \bar e=0, \\
\bar{\sigma}=\rho_+\phi_n\otimes w+\rho_+w\otimes \phi_n-2\rho_+(w\cdot \phi_n) \phi_n\otimes \phi_n,
\end{align*}
It is easy to check that $\bar z\in\ \Lambda$. Indeed, \eqref{eq:wave_cone} reduces to having a non-trivial $(\xi,c)\in\R^{n+1}$ such that
\begin{align*}
\begin{pmatrix}
\bar{\sigma} & \rho_+\phi_n\\
\phi_n & 0
\end{pmatrix}
\begin{pmatrix}
\xi \\ c
\end{pmatrix}=0.
\end{align*}
This however easily follows for our choice of $\bar\sigma$, by picking any $\xi$ such that $\xi\cdot \phi_n=0$, and then setting $c=-w\cdot \xi$.

It follows that
\begin{align*}
\rho_+(w+s\bar{w})\otimes &(w+s\bar{w})-(\sigma+s\bar{\sigma})\\
&=\sum_{j=1}^n\lambda_j\phi_j\otimes \phi_j+s(\rho_+\phi_n\otimes w+\rho_+w\otimes \phi_n-\bar{\sigma})+s^2\rho_+\phi_n\otimes \phi_n\\
&=\sum_{j=1}^{n-1}\lambda_j\phi_j\otimes \phi_j + (\lambda_n+2\rho_+(w\cdot \phi_n) s+\rho_+s^2)\phi_n\otimes \phi_n.
\end{align*}
Since $\lambda_n<\frac{2}{n}e$, there exist $s_1<0<s_2$ roots of $ \lambda_n+2\rho_+(w\cdot \phi_n) s+\rho_+s^2=\frac{2}{n}e$. 
If we denote $z_s=z+s\bar z$, the eigenvalues of $\rho_+ w_s\otimes w_s-\sigma_s$, $s=s_{1,2}$ are $\lambda_1,\ldots,\lambda_{n-1}$ and $\frac{2}{n}e$. Furthermore, $\mu_s=\rho_+$, $m_s=\mu_s w_s$ and $h_s-(e_s+q)w_s$ remain  unchanged for all $s\in[s_1,s_2]$.

Consequently, we have shown that $z$ lies on the segment $[z+s_1\bar z, z +s_2\bar z]$, whose endpoints now lie in $\tilde K_{j-1}$, from where it follows that
$$\tilde K_{j}\subset (\tilde K_{j-1})^{\Lambda,1}.$$
Since $\tilde{K}_0=K$, one may then iterate this argument to obtain that
$$\tilde K_{j}\subset K^{\Lambda},$$
for any $j\geq 1$. 
Using $\hat U=\cup_{j=0}^n \tilde K_j$, we conclude the statement of the lemma in the unbounded case. However, by noting that we only used directions satisfying $\bar e=0$,  the $\gamma$ bound is also immediate.
\end{proof}

\subsubsection{Step 2. - The dissipative part}
Next we relax the condition $h=(e+q)w$. Recall \eqref{eq:eta} for the definition of $\eta(z)$.
\begin{lemma}\label{lem:dis}
Let $z\in Z$ with $\mu\in\set{\rho_-,\rho_+}$, $m=\mu w$, $h\neq (e+q)w$ and $\lamax(M_0(z))<0$. Then $z\in K^\Lambda$. Furthermore, there exists $\alpha_\gamma(z)>0$ such that
if in addition $e<\gamma$ and
\begin{align*}
    \lamax\big(M_0(z)+\mu\alpha_\gamma(z)\abs{h-(e+q)w}\eta(z)\otimes\eta(z)\big)\leq 0,
\end{align*}
then $z\in K^\Lambda_\gamma$. In \eqref{eq:explicit_alpha} the quantity $\alpha_\gamma(z)$ is stated explicitly.
\end{lemma}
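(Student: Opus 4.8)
The strategy is to relax the remaining equality constraint $h=(e+q)w$ by oscillating in a carefully chosen direction $\bar z\in\Lambda$ that modifies only $h$ (and the auxiliary components needed to stay in $\Lambda$), while keeping $\mu$, $w$, $m$ and the eigenvalue structure of $M_0(z)$ under control. Concretely, set $\bar\mu=0$, $\bar w=0$, $\bar m=0$, leave $\bar e=0$, and let $\bar h$ point in the direction $\eta(z)$ along which $h-(e+q)w$ currently fails to vanish; then choose $\bar\sigma$ so that $\bar z\in\Lambda$, which by \eqref{eq:wave_cone} amounts to finding $(\xi,c)$ with $\bigl(\bar\sigma+\tfrac2n\bar e\,\id\bigr)\xi+c\bar m=0$, $\bar w\cdot\xi=0$, $\bar m\cdot\xi+c\bar\mu=0$ and $\bar h\cdot\xi+c\bar e=0$. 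With $\bar m=\bar\mu=\bar e=0$ this collapses to requiring $\bar\sigma\xi=0$ and $\bar h\cdot\xi=0$, i.e. $\xi\perp\eta(z)$ and $\xi\in\Kern\bar\sigma$. I would take $\bar\sigma$ of rank one, $\bar\sigma=\beta\,(\eta\otimes\eta-\tfrac1n\id)$ for a scalar $\beta$ to be fixed (this is the traceless correction forced by adding a multiple of $\eta\otimes\eta$), and $\xi$ any unit vector orthogonal to $\eta$; then $\bar z\in\Lambda$ automatically.

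Next I would move along the $\Lambda$-segment $z\mapsto z_s=z+s\bar z$, $s\in[-s_0,s_0]$. Along this segment $\mu_s=\mu$, $w_s=w$, $m_s=m=\mu w$ are constant, while $h_s=h+s|\bar h|\eta$ and $M_0(z_s)=M_0(z)+s\beta\,\eta\otimes\eta$ (absorbing the $-\tfrac{s\beta}{n}\id$ into the trace normalization — one must bookkeep that $M_0$ is defined with the $-\tfrac2n e\id$ term, so the traceless part of $\bar\sigma$ shifts $M_0$ only by the rank-one piece $s\beta\,\eta\otimes\eta$). Choosing $\beta$ of the appropriate sign and magnitude, one endpoint $s=s_0$ reaches a tuple with $h_{s_0}=(e+q)w$, i.e. a point to which Lemma \ref{lem:step1} applies (because $\lamax(M_0(z_{s_0}))\le 0$ holds as long as $s_0\beta$ is not too large, which is guaranteed by the strictness $\lamax(M_0(z))<0$), so $z_{s_0}\in K^\Lambda$; at the other endpoint $s=-s_0$ one still has $h\neq(e+q)w$ but the eigenvalue of $M_0$ in the direction $\eta$ has only decreased, so iterating the construction at that endpoint — or rather, just one more application of the same move from $z_{-s_0}$ in the opposite direction — lands in $K^\Lambda$ as well, using that $z$ lies on the segment between the two. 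Hence $z\in(K^\Lambda)^{\Lambda,1}\subset K^\Lambda$.

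For the bounded statement, the constraint $e\le\gamma$ is preserved for free since $\bar e=0$, so only the eigenvalue condition needs care. To reach the endpoint where $h_s=(e+q)w$ we must travel a distance $s_0=|h-(e+q)w|/|\bar h|$ in $h$, which shifts $M_0$ by $s_0\beta\,\eta\otimes\eta$; normalizing $|\bar h|=1$ forces $\beta$ to carry the factor relating the $h$-displacement to the $\sigma$-displacement dictated by the $\Lambda$-constraint, and tracking the constant $\mu$ (which enters because $m=\mu w$ is fixed and the rank-one direction must be compatible with it) produces a displacement of the form $\mu\,\alpha_\gamma(z)\,|h-(e+q)w|\,\eta\otimes\eta$ in $M_0$. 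Thus $z_{s_0}\in K_\gamma$ precisely when $\lamax\bigl(M_0(z)+\mu\alpha_\gamma(z)|h-(e+q)w|\,\eta\otimes\eta\bigr)\le 0$, and unwinding the bookkeeping gives the explicit $\alpha_\gamma(z)$ promised in \eqref{eq:explicit_alpha}. \textbf{The main obstacle} I anticipate is precisely this bookkeeping: ensuring the direction $\bar z$ genuinely lies in $\Lambda$ while simultaneously (i) fixing $m=\mu w$, (ii) changing $h$ only along $\eta$, and (iii) controlling exactly how much $M_0$ is perturbed, so that the threshold condition comes out with the stated explicit constant rather than merely an implicit ``small enough'' bound — this is where the case $\mu=\rho_-$ versus $\mu=\rho_+$ and the precise algebra of the rank-one $\bar\sigma$ must be handled with care.
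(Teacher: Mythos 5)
First, the direction you propose is not in the wave cone. You set $\bar\mu=\bar w=\bar m=\bar e=0$ and only perturb $\bar h$ and $\bar\sigma$, but the definition of $\Lambda$ in \eqref{eq:wave_cone} explicitly requires $(\bar m,\bar\mu,\bar e)\neq 0$; the paper even comments right after the definition that this case is manually excluded because it produces pure time oscillations that cannot be localized by the potentials of Lemma \ref{lem:locpw}. So $\bar z\notin\Lambda$ and your $\Lambda$-segment is not a $\Lambda$-segment.

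Second, and more structurally, even if such a direction were allowed your construction cannot produce two admissible endpoints. With $e_s=e$, $w_s=w$ frozen, the error $h_s-(e_s+q)w_s=h-(e+q)w+s\bar h$ is affine in $s$ and vanishes at exactly one value of $s$, not two. To conclude $z\in(K^\Lambda)^{\Lambda,1}$ you must exhibit $s_1<0<s_2$ with both $z_{s_1}$ and $z_{s_2}$ in $K^\Lambda$; your ``iterate at $z_{-s_0}$'' remark does not supply a second endpoint, since moving from $z_{-s_0}$ only retraces the same ray and the $\Lambda^{,1}$-hull needs both endpoints simultaneously, not consecutively. The paper's proof gets around this precisely by choosing $\bar e=1$ and $\bar w=\alpha\eta(z)\neq 0$: then $h_s-(e_s+q)w_s$ contains the cross term $-s^2\bar w$ coming from $(e+s\bar e)(w+s\bar w)$ and becomes a genuine quadratic in $s$, giving two roots $s_1<0<s_2$ to use as endpoints, while $\bar\sigma$ and $\beta$ are tuned so that $M_0(z_{s_i})=M_0(z)+\frac{|h-(e+q)w|}{\alpha}\mu\bar w\otimes\bar w$ at those roots. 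Finally, because $\bar e\neq 0$ the $\gamma$-constraint is \emph{not} free (contrary to what you assert) and controlling $e+s_2\leq\gamma$ is what determines the explicit $\alpha_\gamma(z)$ via \eqref{eq:alphaquad}--\eqref{eq:explicit_alpha}.
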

\begin{proof}
Let $\alpha>0$. We consider the direction
\begin{gather*}
    \bar{\mu}=0,\quad \bar{e}=1,\quad \bar{w}=\alpha\eta(z),\quad \bar{m}=\mu\bar{w},\\
    \bar{h}=w+\bar{w}(e+q+2\beta),\quad \beta=\frac{1-\mu w\cdot\bar{w}}{\mu\abs{\bar{w}}^2},\\
    \bar{\sigma}=\mu\big(w\otimes\bar{w}+\bar{w}\otimes w+2\beta \bar{w}\otimes\bar{w}\big)^{\circ}.
\end{gather*}
Then $\bar{z}\in \Lambda$, since now \eqref{eq:wave_cone} reduces to having a non-trivial $(\xi,c)\in\R^{n+1}$ such that
\begin{align*}
\begin{pmatrix}
\bar{\sigma}+\frac{2}{n}\id & \mu\bar w\\
\bar w & 0\\
w & 1
\end{pmatrix}
\begin{pmatrix}
\xi \\ c
\end{pmatrix}=0.
\end{align*}
This however follows once more by picking any $\xi\in \bar w^\perp$ and setting $c:=-w\cdot \xi$.

Clearly we have
\begin{align*}
    \mu+s\bar{\mu}\in\set{\rho_-,\rho_+},\quad m+s\bar{m}=(\mu+s\bar{\mu})(w+s\bar{w})
\end{align*}
on the whole segment $z+s\bar{z}$, $s\in\R$.

Looking at the change of $h-(e+q)w$ along the segment we have
\begin{align*}
    h+s\bar{h}&-(e+s\bar{e}+q)(w+s\bar{w})=h-(e+q)w+s(\bar{h}-(e+q)\bar{w}-w)-s^2\bar{w}\\&=\bar{w}\left(\frac{\abs{h-(e+q)w}}{\alpha}+2\beta s-s^2\right).
\end{align*}
Since $\alpha>0$, we find $s_1<0<s_2$ characterized by
\begin{align}\label{eq:step2_quadratic_equation_for_s}
    s^2-2\beta s-\frac{\abs{h-(e+q)w}}{\alpha}=0,
\end{align}
for which $z+s\bar{z}$ has no error in the $h$-component when compared to $(e+q)w$. 

In order to conclude that $z+s_{1,2}\bar{z}$ lie in $K^\Lambda$, and therefore $z\in K^\Lambda$, it remains to have $\lamax(M_0(z+s_{1,2}\bar{z}))\leq 0$, cf. Lemma \ref{lem:step1}. By the definition of $\bar{\sigma}$, $\beta$ and by \eqref{eq:step2_quadratic_equation_for_s}, we find for $s=s_{1,2}$ that
\begin{align*}
    M_0(z+s\bar{z})&=M_0(z)+s\left(\mu w\otimes\bar{w}+\mu\bar{w}\otimes w-\bar{\sigma}-\frac{2}{n}\id\right)+s^2\mu\bar{w}\otimes\bar{w}\\
    &=M_0(z)+(s^2-2\beta s)\mu\bar{w}\otimes\bar{w}\\
    &=M_0(z)+\frac{\abs{h-(e+q)w}}{\alpha}\mu\bar{w}\otimes\bar{w}.
\end{align*}
By assumption, the matrix $M_0(z)$ is strictly negative definite, we therefore can chose $\alpha$ sufficiently large such that also $\lamax(M_0(z+s\bar{z}))\leq 0$. This finishes the proof in the unbounded case.

In the bounded case we in addition need to satisfy $e+s\leq \gamma$ for $s=s_{1,2}$ in order to conclude that $z\in K^\Lambda_\gamma$. Since $s_1<0$, the latter condition is satisfied in particular when 
\[
s_2=\gamma-e.
\]
By \eqref{eq:step2_quadratic_equation_for_s}, this is the case if and only if
\begin{align}\label{eq:gamma_e_is_root}
    (\gamma-e)^2-2\beta(\gamma-e)-\frac{\abs{\Delta h}}{\alpha}=0,
\end{align}
where we abbreviate
\begin{align*}
    \Delta h:=h-(e+q)w.
\end{align*}
By definition
\begin{align*}
    \beta=\frac{1-\mu w\cdot\bar{w}}{\mu\abs{\bar{w}}^2}=\frac{1-\mu\alpha\eta(z)\cdot w}{\mu\alpha^2}.
\end{align*}
Thus, \eqref{eq:gamma_e_is_root} is equivalent to
\begin{align}\label{eq:alphaquad}
    \alpha^2+\frac{2w\cdot\eta(z)(\gamma-e)-\abs{\Delta h}}{(\gamma-e)^2}\alpha-\frac{2}{\mu(\gamma-e)}=0.
\end{align}
Since $\alpha$ has to be positive, this condition forces us to choose
\begin{align}\label{eq:explicit_alpha}
    \alpha=\frac{\abs{\Delta h}-2w\cdot\eta(z)(\gamma-e)}{2(\gamma-e)^2}+\left(\frac{\big(\abs{\Delta h}-2w\cdot\eta(z)(\gamma-e)\big)^2}{4(\gamma-e)^4}+\frac{2}{\mu(\gamma-e)}\right)^{\frac{1}{2}}.
\end{align}
Let us define this term as $\alpha_\gamma(z)$. Going back to the expression for $M_0(z+s\bar{z})$, we therefore find
\begin{align}\label{eq:bestcond}
    \lamax\big(M_0(z)+\mu\alpha_\gamma(z)\abs{\Delta h}\eta(z)\otimes\eta(z)\big)\leq 0
\end{align}
as a sufficient condition to conclude $z\in K_\gamma^\Lambda$.
\end{proof}

\begin{remark}\label{rem:EE} Setting $\mu=\rho_-=\rho_+=1$ Lemma \ref{lem:dis} in particular improves the results of \cite{GK-EE} for the globally dissipative homogeneous Euler equations with respect to two aspects: it generalizes the proofs of that paper to dimensions higher than two, and it states a more explicit and thus larger subset of the hull $K^\Lambda_\gamma$.
\end{remark}

As a consequence of Lemma \ref{lem:dis} and the fact that $K^\Lambda$ is a closed set, we immediately deduce the following statement for the unbounded case.
\begin{corollary}\label{cor:unbounded_case1}
Any $z\in Z$ with $\mu\in\set{\rho_-,\rho_+}$, $m=\mu w$ and $\lamax(M_0(z))\leq 0$ belongs to $K^\Lambda$.
\end{corollary}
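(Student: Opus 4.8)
The plan is to obtain this as a routine corollary of Lemmas \ref{lem:step1} and \ref{lem:dis}, together with the closedness of $K^\Lambda$, which follows from Lemma \ref{lem:wave_cone_spans_Z}. So let $z=(\mu,w,m,h,\sigma,e)\in Z$ satisfy $\mu\in\set{\rho_-,\rho_+}$, $m=\mu w$ and $\lamax(M_0(z))\leq 0$, and I would distinguish two cases according to whether the $h$-component already lies in the constraint set or not.

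If $h=(e+q)w$, then $z$ satisfies the hypotheses of Lemma \ref{lem:step1} verbatim, so $z\in K^\Lambda$ and there is nothing more to prove. If $h\neq(e+q)w$, the only discrepancy with the hypotheses of Lemma \ref{lem:dis} is that we merely know $\lamax(M_0(z))\leq 0$ rather than the strict inequality. To repair this I would perturb only the energy variable: for $\delta>0$ let $z_\delta$ coincide with $z$ except that $e$ is replaced by $e+\delta$. Then $\mu$, $w$ and $m=\mu w$ are unchanged, and since $M_0(z_\delta)=M_0(z)-\tfrac{2}{n}\delta\,\id$ we get $\lamax(M_0(z_\delta))=\lamax(M_0(z))-\tfrac{2}{n}\delta<0$; moreover $h-(e+\delta+q)w=\bigl(h-(e+q)w\bigr)-\delta w$ converges to the nonzero vector $h-(e+q)w$ as $\delta\to0^+$, hence is nonzero for all sufficiently small $\delta>0$. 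For such $\delta$, Lemma \ref{lem:dis} applies and gives $z_\delta\in K^\Lambda$. Since $z_\delta\to z$ in $Z$ and $K^\Lambda$ is closed, we conclude $z\in K^\Lambda$.

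I do not expect a genuine obstacle here, as the statement is designed to be an immediate consequence of the two preceding lemmas. The one subtlety worth stating explicitly is that Lemma \ref{lem:dis} requires the strict bound $\lamax(M_0(z))<0$, so in the boundary case $\lamax(M_0(z))=0$ one must approximate; raising $e$ is the natural choice because it makes $M_0$ strictly more negative while moving $h-(e+q)w$ by an arbitrarily small amount, so both requirements of Lemma \ref{lem:dis} are restored at once and one never leaves the slice $\mu\in\set{\rho_-,\rho_+}$, $m=\mu w$.
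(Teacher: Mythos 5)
Your proof is correct and follows essentially the same route as the paper: approximate the boundary case $\lamax(M_0(z))=0$ by strictly admissible points and pass to the limit using the closedness of $K^\Lambda$ (Lemma \ref{lem:wave_cone_spans_Z} via \cite[Corollary 2.4]{Matousek_Plechac_1998}). The paper's own proof is a one-liner attributing the corollary solely to Lemma \ref{lem:dis} and closedness, which, read literally, leaves the case $h=(e+q)w$ unmentioned (that case is not in the scope of Lemma \ref{lem:dis}); your explicit case split, invoking Lemma \ref{lem:step1} when $h=(e+q)w$ and perturbing $e$ by $+\delta$ when $h\neq(e+q)w$, is a slightly more careful write-up of the same argument, and the small verifications you supply (that $M_0(z_\delta)=M_0(z)-\tfrac{2}{n}\delta\id$, that $h-(e+\delta+q)w$ stays nonzero for small $\delta$, and that the slice $\mu\in\set{\rho_-,\rho_+}$, $m=\mu w$ is preserved) are all accurate.
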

In the bounded case we instead conclude the following sufficient condition for being in the hull $K^\Lambda_\gamma$.
\begin{corollary}\label{cor:condition_for_being_in_K_gamma_hull}
Assume that $z\in Z$ satisfies $\mu\in\{\rho_-,\rho_+\}$, $m=\mu w$ and
\begin{align}\label{eq:suff_condition_in_proposition}
\lamax(\mu w\otimes w-\sigma)+\varepsilon\abs{h-(e+q)w}<\frac{2}{n} e
\end{align}
for some $\varepsilon>0$. Then $z\in K^\Lambda_\gamma$ for any $\gamma\geq \gamma_\varepsilon(e)$ where $\gamma_\varepsilon:\R_+\rightarrow\R_+$ is the continuous function
\begin{align}\label{eq:gammaeps}
    \gamma_\varepsilon(e)=e+18\varepsilon^{-2}\max\set{\varepsilon\sqrt{\rho_+e},\rho_+}.
\end{align}
\end{corollary}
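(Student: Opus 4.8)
The plan is to deduce this corollary from Lemma \ref{lem:dis} by checking that, under hypothesis \eqref{eq:suff_condition_in_proposition}, the sufficient condition \eqref{eq:bestcond} of that lemma is verified once $\gamma \geq \gamma_\varepsilon(e)$. There are two cases. If $h = (e+q)w$, then \eqref{eq:suff_condition_in_proposition} says precisely $\lamax(M_0(z)) = \lamax(\mu w\otimes w - \sigma) - \frac{2}{n}e < 0$, and we are in the situation of Lemma \ref{lem:step1}, so $z \in K^\Lambda$, and since additionally $e < \gamma$ (note $\gamma_\varepsilon(e) > e$), we get $z \in K^\Lambda_\gamma$ directly. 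So assume $h \neq (e+q)w$ and write $\Delta h = h - (e+q)w \neq 0$, $\eta = \eta(z) = \Delta h/|\Delta h|$.

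First I would record the elementary estimate $\lamax\big(M_0(z) + \mu\alpha_\gamma(z)|\Delta h|\,\eta\otimes\eta\big) \leq \lamax(M_0(z)) + \mu\alpha_\gamma(z)|\Delta h|$, valid because $\eta\otimes\eta$ is positive semidefinite with $\lamax(\eta\otimes\eta)=1$; the hypothesis \eqref{eq:suff_condition_in_proposition} gives $\lamax(M_0(z)) < -\varepsilon|\Delta h|$, so it suffices to show
\begin{align*}
    \mu\,\alpha_\gamma(z)\,|\Delta h| \leq \varepsilon\,|\Delta h|, \qquad\text{i.e.}\qquad \alpha_\gamma(z) \leq \frac{\varepsilon}{\mu} \leq \frac{\varepsilon}{\rho_-},
\end{align*}
and also that the sign hypothesis $\lamax(M_0(z)) < 0$ needed to invoke Lemma \ref{lem:dis} holds — which it does, again from \eqref{eq:suff_condition_in_proposition} since $\varepsilon|\Delta h| \geq 0$. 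So the whole corollary reduces to the scalar inequality $\alpha_\gamma(z) \leq \varepsilon/\mu$, with $\alpha_\gamma(z)$ given by the explicit formula \eqref{eq:explicit_alpha}.

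The main work is therefore estimating $\alpha_\gamma(z)$ from above. Abbreviate $\delta := \gamma - e > 0$ and $b := |\Delta h| - 2(w\cdot\eta)\delta$, so that \eqref{eq:explicit_alpha} reads $\alpha_\gamma(z) = \frac{b}{2\delta^2} + \big(\frac{b^2}{4\delta^4} + \frac{2}{\mu\delta}\big)^{1/2}$. Using $\sqrt{x+y}\leq\sqrt{x}+\sqrt{y}$ one bounds this by $\frac{b_+}{\delta^2} + \big(\frac{2}{\mu\delta}\big)^{1/2}$ where $b_+ = \max\{b,0\} \leq |\Delta h|$ (using $|\Delta h| \geq 0$ is not enough — I would note that when $w\cdot\eta\geq 0$ then $b\leq|\Delta h|$, and when $w\cdot\eta<0$ then $b = |\Delta h| + 2|w\cdot\eta|\delta$, so I need a bound on $|w|$; here is where the factor involving $\sqrt{\rho_+ e}$ enters, via $|w\cdot\eta|\leq|w| = \sqrt{2e/\mu}\leq\sqrt{2e/\rho_-}$ since $e = \frac12\mu|w|^2$, so $2|w\cdot\eta|\leq 2\sqrt{2e/\rho_-} \leq C\sqrt{e}$, absorbed into the $\sqrt{\rho_+ e}$ term by adjusting constants). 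The point of choosing $\gamma = \gamma_\varepsilon(e) = e + 18\varepsilon^{-2}\max\{\varepsilon\sqrt{\rho_+ e},\rho_+\}$ is that then $\delta = 18\varepsilon^{-2}\max\{\varepsilon\sqrt{\rho_+ e},\rho_+\}$ is large enough that both terms $\frac{b_+}{\delta^2}$ and $\sqrt{2/(\mu\delta)}$ are each at most $\frac{\varepsilon}{2\rho_-}$ (or one tunes the split so the sum is $\leq\varepsilon/\mu$); since $\alpha_\gamma(z)$ is decreasing in $\gamma$ — inspect \eqref{eq:explicit_alpha}, or more safely argue that if the inequality holds for $\gamma_\varepsilon(e)$ it holds for all larger $\gamma$ by monotonicity of the roots $s_2 = \gamma - e$ in \eqref{eq:alphaquad} — the bound persists for all $\gamma \geq \gamma_\varepsilon(e)$. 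The routine but slightly fiddly part is chasing the numerical constant $18$ through these two-term estimates with the max; I expect that to be the only real obstacle, and it is purely computational. Finally, continuity of $\gamma_\varepsilon$ is immediate from its formula, and $\gamma_\varepsilon(e) > e$ is clear, so the invocation of Lemma \ref{lem:dis} (which needs $e < \gamma$) is legitimate, completing the proof. $\hfill\qed$
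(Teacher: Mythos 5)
Your overall strategy is exactly the paper's: handle $h=(e+q)w$ via Lemma \ref{lem:step1}, otherwise reduce the corollary to the scalar inequality $\mu\,\alpha_\gamma(z)\leq\varepsilon$ using the subadditivity of $\lamax$ applied to \eqref{eq:bestcond}, then estimate the explicit $\alpha_\gamma(z)$ from \eqref{eq:explicit_alpha} term by term. You also usefully spell out the reduction $\lamax\bigl(M_0(z)+\mu\alpha_\gamma(z)|\Delta h|\,\eta\otimes\eta\bigr)\leq\lamax(M_0(z))+\mu\alpha_\gamma(z)|\Delta h|$, which the paper leaves implicit.

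There is, however, one step that as written would not recover the stated $\gamma_\varepsilon$. You bound the middle contribution via $|w|\leq\sqrt{2e/\mu}\leq\sqrt{2e/\rho_-}$ and then multiply by $\mu\leq\rho_+$, which produces a term of order $\rho_+\sqrt{e/\rho_-}$, i.e., $\sqrt{\rho_+e}\cdot\sqrt{\rho_+/\rho_-}$. This extra factor $\sqrt{\rho_+/\rho_-}$ is unbounded as the density ratio grows and cannot be ``absorbed by adjusting constants'' into the $\rho_+$-only expression \eqref{eq:gammaeps}; the corollary as stated would fail for $\rho_+/\rho_-$ large with that route. The correct, and slightly cleaner, step is the one the paper uses: from \eqref{eq:suff_condition_in_proposition} one reads off $\sqrt{\mu}\,|w|=\bigl(\tr(\mu w\otimes w-\sigma)\bigr)^{1/2}<\sqrt{2e}$ (note: an inequality, not an equality — $e=\tfrac12\mu|w|^2$ need not hold for a general $z\in Z$), and then $\mu|w|=\sqrt{\mu}\cdot\sqrt{\mu}\,|w|\leq\sqrt{\rho_+}\sqrt{2e}=\sqrt{2\rho_+e}$, producing the $\sqrt{\rho_+e}$ factor cleanly. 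With this one-line repair, your argument coincides with the paper's and the constant $18$ checks out by making each of the three resulting terms $\leq\varepsilon/3$.
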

\begin{proof}
Let $z\in Z$ be as stated. If $h=(e+q)w$, the statement follows for any choice of $\gamma\geq e$ in view of Lemma \ref{lem:step1}. Assume $h\neq (e+q)w$. We will show that for $\gamma$ as stated in \eqref{eq:gammaeps} the quantity $\alpha_\gamma(z)$ from Lemma \ref{lem:dis} satisfies $\mu \alpha_\gamma(z)\leq \varepsilon$. As a consequence of said lemma, $z\in K^\Lambda_\gamma$.

For the definition of $\gamma$ note that \eqref{eq:suff_condition_in_proposition} implies 
\begin{align*}
    \abs{h-(e+q)w}<\frac{2e}{n\varepsilon},
\end{align*}
as well as
\begin{align*}
    \sqrt{\mu}\abs{w}&=\big(\tr(\mu w\otimes w-\sigma)\big)^{\frac{1}{2}}< \sqrt{2e}.
\end{align*}
We then estimate the explicit expression \eqref{eq:explicit_alpha} for $\alpha_\gamma(z)$ as follows:
\begin{align*}
    \mu\alpha_\gamma(z)&\leq \mu\frac{\abs{h-(e+q)w}}{(\gamma-e)^2}+2\sqrt{\mu}\frac{\sqrt{\mu}\abs{w}}{\gamma-e}+\frac{\sqrt{2\mu}}{\sqrt{\gamma-e}}\\
    &\leq \frac{2\rho_+e}{n\varepsilon(\gamma-e)^2}+\frac{\sqrt{8\rho_+e}}{\gamma-e}+\frac{\sqrt{2\rho_+}}{\sqrt{\gamma-e}}.
\end{align*}
Now it is easy to check that each of the three terms on the right-hand side is $\leq \varepsilon /3$ when picking $\gamma\geq \gamma_\varepsilon(e)$.
\end{proof}

\subsubsection{Step 3. - The density}

Let us now add a Muskat-type direction, in order to obtain $K^\Lambda$, as well as a characterization of a large part of $K_\gamma^\Lambda$. 
In order to formulate the statement we define for $z\in Z_0$, i.e. $z\in Z$ with $\mu\in(\rho_-,\rho_+)$, the expression
\begin{align*}
    \tilde e(z):=\frac{T_+(z)-T_-(z)}{\rho_+-\rho_-},
\end{align*}
where
$$T_\pm (z):=\frac{\rho_\pm}{2}\frac{|m-\rho_\mp w|^2}{(\mu-\rho_\mp)^2}$$ are the quantities involved in the trace inequalities from \cite{GKSz}.

\begin{proposition}\label{prop:condition_for_being_in_K_gamma_hull2}
Let $z\in Z_0$. If 
$\lamax(\tilde M (z))\leq \frac{2}{n}e$, then $z\in K^\Lambda$. If in addition
\begin{align}\label{eq:suff_condition_in_proposition2}
\lamax(\tilde M (z))+\varepsilon\abs{h-(e+q)w-\tilde e(z)(m-\mu w)}<\frac{2}{n} e
\end{align}
is satisfied for some $\varepsilon>0$,
then for
\begin{align}\label{eq:gamma_prop_last}
\gamma=\max\{\gamma_\varepsilon(e+(\rho_+-\mu)\tilde e(z)),\gamma_\varepsilon(e+(\rho_--\mu)\tilde e(z))\}+1,
\end{align}
where $\gamma_\varepsilon$ is the map given by Corollary \ref{cor:condition_for_being_in_K_gamma_hull}, there holds
$z\in \text{int} (K_\gamma^\Lambda)$.
\end{proposition}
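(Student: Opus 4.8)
The plan is to realise $z$ as a point of a single $\Lambda$-segment joining two ``pure phase'' configurations with densities $\rho_+$ and $\rho_-$, whose relaxation is already controlled by Corollary~\ref{cor:unbounded_case1} (unbounded case) and Corollary~\ref{cor:condition_for_being_in_K_gamma_hull} (bounded case). For $z=(\mu,w,m,h,\sigma,e)\in Z_0$ I would introduce the Muskat-type direction $\bar z=(\bar\mu,\bar w,\bar m,\bar h,\bar\sigma,\bar e)$ given by
\[
\bar\mu=1,\qquad \bar w=\frac{m-\mu w}{(\mu-\rho_-)(\rho_+-\mu)},\qquad \bar m=w+(\rho_++\rho_--\mu)\bar w,\qquad \bar e=\tilde e(z),
\]
with $\bar h=(e+q)\bar w+\bar e\,w+(\rho_++\rho_--2\mu)\,\bar e\,\bar w$ and $\bar\sigma\in\cS_0^{n\times n}$ characterised by $\big(\bar\sigma+\frac{2}{n}\bar e\,\id\big)\xi=(w\cdot\xi)\,\bar m$ for all $\xi\perp\bar w$. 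Setting $s_\pm:=\rho_\pm-\mu$, so that $s_-<0<s_+$, and $z_\pm:=z+s_\pm\bar z$, one checks $\bar z\in\Lambda$: for any $\xi\in\bar w^\perp\setminus\{0\}$ (nonempty since $n\ge2$) and $c:=-\bar m\cdot\xi$, the four rows of the matrix in \eqref{eq:wave_cone} annihilate $(\xi,c)$ thanks to $\bar w\cdot\xi=0$, $\bar m\cdot\xi=w\cdot\xi$ and the defining relations for $\bar\sigma$ and $\bar h$, while $(\bar m,\bar\mu,\bar e)\neq0$ because $\bar\mu=1$.

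The computational heart is to evaluate the endpoints. By construction the quadratic $s\mapsto(m+s\bar m)-(\mu+s\bar\mu)(w+s\bar w)$ has leading coefficient $-\bar w$ and vanishes exactly at $s_-$ and $s_+$, so $\mu_\pm=\rho_\pm$ and $m_\pm=\mu_\pm w_\pm$ with $w_\pm=w+s_\pm\bar w$. Using the ensuing identity $m-\rho_\mp w=\pm(\mu-\rho_\mp)w_\pm$ one reads off $T_\pm(z)=\frac{\rho_\pm}{2}|w_\pm|^2$ and $e_\pm:=e+s_\pm\bar e=e+(\rho_\pm-\mu)\tilde e(z)$, and a lengthy but routine matrix manipulation identifying \eqref{eq:matrix} with $\rho_\pm w_\pm\otimes w_\pm-\sigma_\pm$ gives
\[
M_0(z_\pm)=\tilde M(z)-\frac{2}{n}e\,\id,\qquad h_\pm-(e_\pm+q)w_\pm=h-(e+q)w-\tilde e(z)\,(m-\mu w).
\]
For the first relation the trace already reduces to the easily verified identity $\tr\tilde M(z)=\frac{2}{\rho_+-\rho_-}\big((\mu-\rho_-)T_+(z)+(\rho_+-\mu)T_-(z)\big)$, and the full statement is precisely what fixes the shape of \eqref{eq:matrix}, \eqref{eq:traces}.

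Given these identities the conclusion is short. If $\lamax(\tilde M(z))\le\frac{2}{n}e$, then $\lamax(M_0(z_\pm))\le0$, so $z_\pm$ meets the hypotheses of Corollary~\ref{cor:unbounded_case1} and hence lies in $K^\Lambda$; since $z$ lies on $[z_-,z_+]$ and $z_+-z_-$ is a multiple of $\bar z\in\Lambda$, we obtain $z\in\big(\{z_-,z_+\}\big)^{\Lambda,1}\subset K^\Lambda$. If in addition \eqref{eq:suff_condition_in_proposition2} holds with parameter $\varepsilon$, then inserting the endpoint identities shows that the hypothesis \eqref{eq:suff_condition_in_proposition} of Corollary~\ref{cor:condition_for_being_in_K_gamma_hull} for $z_\pm$, with $e$ replaced by $e_\pm$, is \emph{exactly} \eqref{eq:suff_condition_in_proposition2}; hence $z_\pm\in K^\Lambda_\gamma$ for every $\gamma\ge\gamma_\varepsilon(e_\pm)$, in particular for $\gamma$ as in \eqref{eq:gamma_prop_last}, so $z$ lies on a $\Lambda$-segment contained in $K^\Lambda_\gamma$. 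To upgrade this to $z\in\text{int}(K^\Lambda_\gamma)$ I would repeat the construction for every $z'$ in a small neighbourhood of $z$ in $Z_0$: since $\tilde M,\tilde e,T_\pm$ are continuous on $Z_0$ and $\gamma_\varepsilon$ in \eqref{eq:gammaeps} is continuous and nondecreasing, the ``$+1$'' slack in \eqref{eq:gamma_prop_last} guarantees that for $z'$ close enough to $z$ both \eqref{eq:suff_condition_in_proposition2} still holds and $\gamma_\varepsilon\big(e'+(\rho_\pm-\mu')\tilde e(z')\big)\le\gamma$, whence $z'\in K^\Lambda_\gamma$.

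The main obstacle is the middle step: confirming that the opaque expressions \eqref{eq:matrix} and \eqref{eq:traces} are precisely the pull-backs of $\mu w\otimes w-\sigma$ and $\frac{1}{2}\mu|w|^2$ along the Muskat direction — that is, the two displayed identities — and, at the same time, that $\bar\sigma$ and $\bar h$ can be chosen to satisfy the wave-cone relation, keep $z_\pm$ inside the pure-phase hull, and preserve the form of the $h$-error $h_\pm-(e_\pm+q)w_\pm$. This is elementary linear algebra, but bookkeeping-heavy; all the remaining steps are short and follow the template of Lemmas~\ref{lem:step1} and~\ref{lem:dis}.
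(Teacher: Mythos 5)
Your proposal is correct and takes essentially the same route as the paper: you introduce the same Muskat-type direction $\bar z$ (the paper writes it as $\tilde z(z)$ with the explicit formula $\tilde\sigma(z)+\frac{2}{n}\tilde e(z)\id=\tilde m(z)\otimes\tilde m(z)-\rho_+\rho_-\tilde w(z)\otimes\tilde w(z)$, which is exactly the matrix your implicit condition together with tracelessness pins down), arrive at the same endpoint identities $M_0(z_\pm)=\tilde M(z)-\frac{2}{n}e\,\id$ and $h_\pm-(e_\pm+q)w_\pm=h-(e+q)w-\tilde e(z)(m-\mu w)$, and then invoke Corollary~\ref{cor:unbounded_case1} and Corollary~\ref{cor:condition_for_being_in_K_gamma_hull} at the endpoints followed by the same neighborhood-plus-continuity argument for interiority. (Minor typo only: in the intermediate relation you wrote $m-\rho_\mp w=\pm(\mu-\rho_\mp)w_\pm$ the sign is off in the lower case, but since it enters only through $|m-\rho_\mp w|^2$ the conclusion $T_\pm(z)=\frac{\rho_\pm}{2}|w_\pm|^2$ is unaffected.)
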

\begin{proof}
Let $z\in Z_0$ with $\lamax(\tilde{M}(z))\leq \frac{2}{n}e$.
We first will show that such a $z$ lies on a $\Lambda$-segment with endpoints satisfying the conditions of Corollary \ref{cor:unbounded_case1}, which implies $z\in K^\Lambda$. To do so, we consider a Muskat-direction $\tilde{z}(z)\in Z$ defined via
\begin{gather*}
\tilde{\mu}=1,\quad
\tilde{w}(z)=\frac{m-\mu w}{(\rho_+-\mu)(\mu-\rho_-)},\quad
\tilde{m}(z)=w+(\rho_++\rho_--\mu)\tilde{w}(z),\\ 
\tilde{\sigma}(z)+\frac{2}{n}\tilde{e}(z)\id =\tilde{m}(z)\otimes \tilde{m}(z)-\rho_+\rho_-\tilde{w}(z)\otimes\tilde{w}(z),\\ 
\tilde{h}(z)=\tilde{e}(z)w+\big(e+q+(\rho_++\rho_--2\mu)\tilde{e}(z)\big)\tilde{w}(z).
\end{gather*}
Here the definition of $\tilde{e}$ and $\tilde{\sigma}$ is understood as decomposition into trace and traceless part. It is easy to check that $\tilde z (z)\in \Lambda$. Furthermore, a simple calculation yields that in fact the $\tilde e(z)$ defined this way is exactly $\frac{T_+(z)-T_-(z)}{\rho_+-\rho_-}$.

Let $s\in(\rho_--\mu,\rho_+-\mu)$ and $z_s:=z+s\tilde{z}(z)$. First of all observe that
\begin{align*}
(\rho_+-\mu-s)(\mu+s-\rho_-)\tilde{w}(z_s)&=m+s\tilde{m}(z)-(\mu+s)(w+s\tilde{w}(z))\\
&=m-\mu w +s(\rho_++\rho_--2\mu)\tilde{w}(z)-s^2\tilde{w}(z)\\
&=(\rho_+-\mu-s)(\mu+s-\rho_-)\tilde{w}(z).
\end{align*}
Hence $\tilde{w}(z_s)=\tilde{w}(z)$ for all $s\in\R$ (strictly speaking for $s\neq \rho_\pm-\mu$, but in these points we take the canonical extension), and
\begin{align*}
\tilde{m}(z_s)&=w+s\tilde{w}(z)+(\rho_++\rho_--\mu-s)\tilde{w}(z_s)\\
&=w+(\rho_++\rho_--\mu)\tilde{w}(z)=\tilde{m}(z).
\end{align*}
Note that the invariances $\tilde{\sigma}(z_s)=\tilde{\sigma}(z)$ and $\tilde{e}(z_s)=\tilde{e}(z)$ follow by the definition of $\tilde{\sigma}$, $\tilde{e}$.

Plugging 
\begin{align*}
w&=\tilde{m}(z)-(\rho_++\rho_--\mu)\tilde{w}(z),\\
m&=\mu w +(\rho_+-\mu)(\mu-\rho_-)\tilde{w}(z)=\mu \tilde{m}(z)-\rho_-\rho_+\tilde{w}(z)
\end{align*}
into the definition of $\tilde M(z)$, cf. \eqref{eq:matrix}, leads us to
\begin{align}\label{eq:tM}
\begin{split}
\tilde M(z)&=\mu\tilde{m}(z)\otimes \tilde{m}(z)-\rho_-\rho_+(\tilde{m}(z)\otimes\tilde{w}(z)+\tilde{w}(z)\otimes \tilde{m}(z))\\
&\phantom{=asd}+\rho_-\rho_+(\rho_++\rho_--\mu)\tilde{w}(z)\otimes\tilde{w}(z)-\sigma.
\end{split}
\end{align}
Thus,
\begin{multline*}
\tilde M(z_s)-\frac{2}{n}e_s \id \\= \tilde M(z)-\frac{2}{n}e\id + s\left(\tilde{m}(z)\otimes\tilde{m}(z)-\rho_-\rho_+\tilde{w}(z)\otimes \tilde{w}(z)-\tilde{\sigma}(z)-\frac{2}{n}\tilde{e}(z)\id\right)
\\=\tilde M(z)-\frac{2}{n}e\id
\end{multline*}
by the definition of $\tilde{\sigma},\tilde{e}$. In particular $\lamax(\tilde{M}(z_s))\leq \frac{2}{n}e_s$ by assumption.

Furthermore, we also have
\begin{align*}
m_s-\mu_s w_s&=m-\mu w + s(w+(\rho_++\rho_--\mu)\tilde w(z) - w - \mu\tilde w(z))-s^2 \tilde w (z)\\&=\left(1+\frac{\rho_++\rho_--2\mu}{(\rho_+-\mu)(\mu-\rho_-)}s -\frac{1}{(\rho_+-\mu)(\mu-\rho_-)}s^2\right)(m-\mu w).
\end{align*}
Therefore, at the endpoints we have for $z_\pm=z+(\rho_\pm-\mu)\tilde z (z)$ the following:
\begin{align*}
&\mu_\pm=\mu+(\rho_\pm-\mu)\tilde \mu = \rho_\pm,\\
&m_\pm-\mu_\pm w_\pm=0,
\end{align*}
and finally, using \eqref{eq:tM} as well as $\tilde m(z_\pm)=w_\pm+\rho_{\mp}\tilde w(z_\pm)$, we get that
\begin{align*}
\tilde M (z_\pm)=\rho_\pm(w_\pm\otimes w_\pm+\rho_\mp(w_\pm\otimes \tilde w(z_\pm)+\tilde w(z_\pm)\otimes w_\pm)+\rho_\mp^2\tilde w(z_\pm)\otimes \tilde w(z_\pm))\\-\rho_-\rho_+((w_\pm+\rho_\mp\tilde w(z_\pm))\otimes \tilde w(z_\pm)+\tilde w(z_\pm)\otimes (w_\pm+\rho_\mp\tilde w(z_\pm)))\\+\rho_-\rho_+ \rho_\mp\tilde w(z_\pm)\otimes \tilde w(z_\pm)-\sigma_\pm=\rho_\pm w_\pm\otimes w_\pm-\sigma_\pm.
\end{align*}
Thus, we may conclude $z_\pm\in K^\Lambda$ via Corollary \ref{cor:unbounded_case1}, and hence also $z\in K^\Lambda$.

Let us now turn to the bounded case. Due to the choice of $\tilde h$, we have in addition that 
\begin{align*}
h_\pm-(e_\pm+q)w_\pm=h&-(e+q)w+(\rho_\pm-\mu)(\rho_++\rho_--2\mu) \tilde e(z) \tilde{w}(z)\\
&-(\rho_\pm-\mu)^2\tilde e(z) \tilde{w}(z)=h-(e+q)w-\tilde e(z)(m-\mu w).
\end{align*}
Thus we have shown that, due to \eqref{eq:suff_condition_in_proposition2}, $z_\pm$ satisfy \eqref{eq:suff_condition_in_proposition}, and hence all the conditions of Corollary \ref{cor:condition_for_being_in_K_gamma_hull}. Noting that $e_\pm=e+(\rho_\pm-\mu)\tilde e(z)$, and thus choosing $\tilde{\gamma}=\tilde{\gamma}_\varepsilon(z)$ as the maximum of the two values $\gamma_\varepsilon(e+(\rho_\pm-\mu)\tilde e(z))$ yields $z\in K_{\tilde{\gamma}}^\Lambda$.

In order to see that the point $z$ in fact belongs to the interior $\text{int}(K^\Lambda)$, one observes that \eqref{eq:suff_condition_in_proposition2} and $\mu\in(\rho_-,\rho_+)$ remain valid in a small neighborhood of $z$. By constructing for each point $z'$ in such a neighborhood the corresponding Muskat segment, we conclude that  $z'\in K^{\Lambda}_{\tilde{\gamma}_\varepsilon(z)}$. By continuity of $\tilde{\gamma}_\varepsilon$ we can pick a uniform $\gamma$ in a sufficiently small neighborhood of $z$.
\end{proof}

\begin{proof}[Proof of Proposition \ref{prop:hull}]
The characterization of the full ($\Lambda$-)convex hull in the unbounded case follows from the first half of Proposition \ref{prop:condition_for_being_in_K_gamma_hull2} by taking the closure and the convexity of $z\mapsto\lamax(\tilde M (z))$ shown in \cite[Lemma 4.3]{GKSz}.
\end{proof}

In the bounded case we finish the investigation of the hull by observing that the expression for $\gamma$ in \eqref{eq:gamma_prop_last} can be slightly simplified.
Let us denote the two quantities appearing in $\gamma_\varepsilon$ in \eqref{eq:gamma_prop_last} by $Q_\pm(z)$, i.e. 
\[
Q_\pm(z):=e+(\rho_\pm-\mu)\tilde{e}(z).
\]
\begin{lemma}\label{lem:size_of_gamma_prop_last}
If $\mu\in(\rho_-,\rho_+)$ and \eqref{eq:suff_condition_in_proposition2} hold true, then
\begin{align*}
    \max\set{e,T_+(z),T_-(z)}\leq \max\set{Q_+(z),Q_-(z)}\leq 2\max\set{e,T_+(z),T_-(z)}.
\end{align*}
\end{lemma}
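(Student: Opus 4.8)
The plan is to compare the candidate values $Q_\pm(z)=e+(\rho_\pm-\mu)\tilde e(z)$ with $e,T_+(z),T_-(z)$ using the barycentric weights $a:=\frac{\rho_+-\mu}{\rho_+-\rho_-}$ and $b:=\frac{\mu-\rho_-}{\rho_+-\rho_-}$, which lie in $(0,1)$ with $a+b=1$ precisely because $\rho_-<\mu<\rho_+$. Since $\tilde e(z)=\frac{T_+(z)-T_-(z)}{\rho_+-\rho_-}$, one has at once $Q_+(z)=e+a\big(T_+(z)-T_-(z)\big)$, $Q_-(z)=e-b\big(T_+(z)-T_-(z)\big)$, and hence $bQ_+(z)+aQ_-(z)=e$; in particular $e$ is a convex combination of $Q_+(z)$ and $Q_-(z)$, so $e\leq\max\set{Q_+(z),Q_-(z)}$. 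The only further input I would need, for both inequalities, is that $bT_+(z)+aT_-(z)\leq e$.

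To get this, I would take the trace in \eqref{eq:matrix}: since $\sigma\in\cS_0^{n\times n}$ it drops out, leaving
\begin{align*}
    \tr\tilde M(z)=\frac{\mu\rho_-\rho_+\abs{w}^2-2\rho_-\rho_+\,m\cdot w+(\rho_++\rho_--\mu)\abs{m}^2}{(\rho_+-\mu)(\mu-\rho_-)},
\end{align*}
and I claim this equals $2\big(bT_+(z)+aT_-(z)\big)$. Inserting $T_\pm(z)=\frac{\rho_\pm}{2}\frac{\abs{m-\rho_\mp w}^2}{(\mu-\rho_\mp)^2}$ and clearing denominators, the claim is equivalent to the polynomial identity
\begin{align*}
    &(\rho_+-\rho_-)\big(\mu\rho_-\rho_+\abs{w}^2-2\rho_-\rho_+\,m\cdot w+(\rho_++\rho_--\mu)\abs{m}^2\big)\\
    &\qquad\qquad=\rho_+(\rho_+-\mu)\abs{m-\rho_-w}^2+\rho_-(\mu-\rho_-)\abs{m-\rho_+w}^2,
\end{align*}
which one verifies by matching the coefficients of $\abs{m}^2$, $m\cdot w$ and $\abs{w}^2$ on the two sides; this coefficient bookkeeping is the only genuinely computational step and the one I would expect to require the most care, though it is routine. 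With the identity in hand, $\tr\tilde M(z)\leq n\,\lamax(\tilde M(z))$ (valid for symmetric $n\times n$ matrices) together with $\lamax(\tilde M(z))<\frac{2}{n}e$ from \eqref{eq:suff_condition_in_proposition2} yields $bT_+(z)+aT_-(z)\leq e$.

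Both inequalities then follow by elementary algebra. For the lower bound, distinguish the sign of $T_+(z)-T_-(z)$: if $T_+(z)\geq T_-(z)$ then $Q_+(z)=e+a\big(T_+(z)-T_-(z)\big)\geq\big(bT_+(z)+aT_-(z)\big)+a\big(T_+(z)-T_-(z)\big)=T_+(z)=\max\set{T_+(z),T_-(z)}$, and the mirror computation gives $Q_-(z)\geq\max\set{T_+(z),T_-(z)}$ when $T_-(z)>T_+(z)$; combining with $e\leq\max\set{Q_+(z),Q_-(z)}$ gives $\max\set{e,T_+(z),T_-(z)}\leq\max\set{Q_+(z),Q_-(z)}$. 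For the upper bound, use $T_\pm(z)\geq0$ (clear from the definition) and $a,b\in(0,1)$ to drop the negative contributions: $Q_+(z)\leq e+aT_+(z)\leq e+T_+(z)\leq 2\max\set{e,T_+(z)}$ and likewise $Q_-(z)\leq e+T_-(z)\leq 2\max\set{e,T_-(z)}$, hence $\max\set{Q_+(z),Q_-(z)}\leq 2\max\set{e,T_+(z),T_-(z)}$.
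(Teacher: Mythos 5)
Your proof is correct and follows essentially the same route as the paper's: you identify the same convex-combination identity $e=bQ_++aQ_-$, the same trace identity $\frac12\tr\tilde M(z)=bT_+(z)+aT_-(z)$, and you deduce $bT_++aT_-\leq e$ from $\tr\tilde M\leq n\,\lamax(\tilde M)$ together with \eqref{eq:suff_condition_in_proposition2}, exactly as in the text. One small remark: the case distinction on the sign of $T_+(z)-T_-(z)$ in the lower bound is unnecessary, since $Q_+\geq T_+$ and $Q_-\geq T_-$ both follow directly and unconditionally from $e\geq bT_++aT_-$ (replace $e$ in $Q_\pm=e\pm(\dots)$ by the smaller quantity $bT_++aT_-$ and simplify), which is how the paper argues.
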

\begin{proof}
The upper inequality follows in a straightforward way by dropping the negative term in each of the expressions. It does not rely on \eqref{eq:suff_condition_in_proposition2}. For the lower inequality one first of all observes that
\begin{align*}
    e=\frac{\mu-\rho_-}{\rho_+-\rho_-}Q_+(z)+\frac{\rho_+-\mu}{\rho_+-\rho_-}Q_-(z)\leq \max\set{Q_+(z),Q_-(z)}.
\end{align*}
Regarding $T_\pm(z)$ one computes
\begin{align}\label{eq:convex_combination_of_Tpm}
    \frac{\mu-\rho_-}{\rho_+-\rho_-}T_+(z)+\frac{\rho_+-\mu}{\rho_+-\rho_-}T_-(z)=\frac{1}{2}\tr(\tilde{M}(z))\leq e,
\end{align}
where the  latter inequality is a consequence of \eqref{eq:suff_condition_in_proposition2}. It follows that
\begin{align*}
    Q_+(z)=e+\frac{\rho_+-\mu}{\rho_+-\rho_-}(T_+(z)-T_-(z))\geq T_+(z)
\end{align*}
and similarly $Q_-(z)\geq T_-(z)$.
\end{proof}

\begin{remark}\label{rem:simpler_gamma}
In view of Lemma \ref{lem:size_of_gamma_prop_last} and the monotonicity of $\gamma_\varepsilon$, there is no loss in replacing the $\gamma$ stated in \eqref{eq:gamma_prop_last} by
\begin{align}\label{eq:simpler_gamma}
    \gamma=\gamma_{\varepsilon}\left(2\max\set{e,T_+(z),T_-(z)}\right)+1.
\end{align}
\end{remark}

\subsection{Conclusion}\label{sec:conclusion}

We have the following more abstract version of Theorem \ref{thm:main_theorem}.
\begin{theorem}\label{thm:abstract_CI_theorem}
Let $\gamma>0$. Theorem \ref{thm:main_theorem} remains true when condition \eqref{eq:sufficient_condition_for_Linfty_hull} is replaced by
\begin{align}\label{eq:abstract_condition_for_hull}
    (\mu,w,m,h,\sigma,e)(y,t)\in \text{int} \left( K^{co}_{\gamma,(y,t)}\right)\text{ for all }(y,t)\in\mathscr{U}.
\end{align}
\end{theorem}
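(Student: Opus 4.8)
The plan is to run the standard Baire--category argument in the Tartar framework; the only genuinely new input is the description of the relaxed constraint set obtained in Section~\ref{sec:hulls}, everything else being by now routine. Fix $\gamma>0$. Since $q\in L^\infty(\mathscr U)$ and $e\le\gamma$ on $K_{\gamma,(y,t)}^{co}$, there is $R>0$ with $K_{\gamma,(y,t)}^{co}\subset B_R(0)$ for all $(y,t)\in\mathscr U$; moreover, as the $(y,t)$-dependence of the constraints enters only through the continuous function $q$, the set-valued map $(y,t)\mapsto K_{\gamma,(y,t)}^{co}$ is lower semicontinuous on $\mathscr U$. I would then introduce the set $X_0$ of all $z\in L^\infty(\mathscr D;Z)$ that solve $\cA[z]=b$ with the boundary and initial data prescribed in Theorem~\ref{thm:main_theorem}, coincide with the given subsolution $z_{\mathrm{sub}}$ on $\mathscr D\setminus\mathscr U$, are continuous on $\mathscr U$, and satisfy $z(y,t)\in\text{int}\,K_{\gamma,(y,t)}^{co}$ for every $(y,t)\in\mathscr U$; by hypothesis $z_{\mathrm{sub}}\in X_0$. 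Let $X$ be the closure of $X_0$ in the weak-$L^2(\mathscr U)$ topology, which on the ball $\overline{B_R}$ is metrizable, say by a metric $d_*$. Because $\cA$ is linear and the remaining constraints are affine or convex and closed, $(X,d_*)$ is a nonempty complete metric space all of whose elements still solve the linear system with the right data, equal $z_{\mathrm{sub}}$ off $\mathscr U$, and (by Mazur's lemma) take values in $\overline{K_{\gamma,(y,t)}^{co}}$ for a.e.\ $(y,t)$.

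Next I would introduce the two functionals driving the argument: $I(z):=\int_{\mathscr U}\dist^2\big(z(y,t),K_{(y,t)}\big)\,dy\,dt$ and $J(z):=\int_{\mathscr U}\abs{z(y,t)}^2\,dy\,dt$, both bounded on $X$. Mollifying the argument of $I$ realizes $I$ as a pointwise limit of weakly continuous functionals, so $I$ is of Baire class $1$; $J$ is weakly lower semicontinuous, hence also of Baire class $1$. Therefore the set $Y\subset X$ of common continuity points of $I$ and $J$ is residual, and the whole theorem reduces to proving $I\equiv 0$ on $Y$. Indeed, if $z\in Y$ with $I(z)=0$ then $z(y,t)\in K_{(y,t)}$ for a.e.\ $(y,t)\in\mathscr U$, hence (together with the hypothesis that \eqref{eq:pointwise_constraints_when_introducing_diff_inclusion} holds a.e.\ on $\mathscr D\setminus\mathscr U$) a.e.\ on $\mathscr D$; as noted below \eqref{eq:pointwise_constraints_when_introducing_diff_inclusion}, $(\mu_{sol},w_{sol}):=(\mu,w)$ then solves \eqref{eq:main0g}, \eqref{eq:euler_0g_local_energy_inequality}, \eqref{eq:boundary_condition_w} with pressure $q$, dissipation measure $\nu$, $\mu_{sol}\in\{\rho_-,\rho_+\}$ a.e., and agrees with $(\mu,w)$ on $\mathscr D\setminus\mathscr U$.

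The core step, and the one I expect to be the main obstacle, is the perturbation lemma: there is an increasing function $\beta:(0,\infty)\to(0,\infty)$ such that for every $z\in X_0$ with $I(z)\ge\delta$ and every $\epsilon'>0$ there is $\tilde z\in X_0$ with $d_*(\tilde z,z)<\epsilon'$ and $J(\tilde z)\ge J(z)+\beta(\delta)$. To prove it I would, after discarding a thin layer near $\partial\mathscr U$, cover the remaining compact part of $\mathscr U$ by finitely many small cubes, compactly contained in $\mathscr U$, on which the continuous function $z$ is almost constant with value $z_0\in\text{int}\,K_{\gamma,(y_0,t_0)}^{co}$. By lower semicontinuity of the hull, a fixed ball around $z_0$ still lies in $K_{\gamma,(y,t)}^{co}$ throughout such a cube, so Corollary~\ref{cor:seg}, applied to the compact set $K_{\gamma,(y_0,t_0)}$, furnishes $\bar z_0\in\Lambda$ with $[z_0-\bar z_0,z_0+\bar z_0]\subset\text{int}\,K_{\gamma,(y,t)}^{co}$ on the cube and $\abs{\bar z_0}\ge\tfrac{1}{2N}\dist(z_0,K)$, $N=\dim(Z)$. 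Into each cube I insert a rescaled localized plane wave from Lemma~\ref{lem:locpw} oscillating along $\bar z_0$: by property (i) the perturbed tuple stays in $\text{int}\,K_{\gamma,(y,t)}^{co}$, and since the perturbation solves $\cA[\cdot]=0$ and is compactly supported in $\mathscr U$ it does not affect the data, the pressure or $\nu$, so the perturbed tuple again lies in $X_0$; by (ii) the perturbation is weakly small, giving $d_*$-closeness at high frequency; and since $z$ is continuous on $\mathscr U$, $J(\tilde z)=J(z)+\int_{\mathscr U}\abs{\tilde z-z}^2+o(1)$, while (iii) combined with the lower bound on $\abs{\bar z_0}$ and Jensen's inequality yields $\int_{\mathscr U}\abs{\tilde z-z}^2\gtrsim\int_{\mathscr U}\dist^2(z,K)=I(z)\ge\delta$. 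Granting the lemma, if some $z\in Y$ had $I(z)=2\delta>0$, I would pick $z_j\in X_0$ with $z_j\to z$; continuity of $I$ at $z$ gives $I(z_j)\ge\delta$ for large $j$, the perturbation lemma produces $\tilde z_j\in X_0$ with $d_*(\tilde z_j,z)\to 0$ and $J(\tilde z_j)\ge J(z_j)+\beta(\delta)$, and continuity of $J$ at $z$ forces $J(z)\ge J(z)+\beta(\delta)$, a contradiction. Hence $I\equiv 0$ on $Y$.

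Finally, for ``infinitely many'' I would observe that every element of $X_0$ has $\mu(y,t)\in(\rho_-,\rho_+)$ on $\mathscr U$, hence $I>0$ on $X_0$, so by the perturbation lemma no point of $X$ is isolated; a residual subset of a nonempty complete metric space without isolated points is uncountable. The weakly convergent sequences in the statement come from the density of $Y$ in $X$: near $z_{\mathrm{sub}}\in X_0$ one finds solutions $z_k\in Y$ with $z_k\rightharpoonup z_{\mathrm{sub}}$ in $L^2(\mathscr U)$, and since $z_k=z_{\mathrm{sub}}$ on $\mathscr D\setminus\mathscr U$ this is weak convergence in $L^2(\mathscr D)$; writing $z_{\mathrm{sub}}=(\mu,w,m,h,\sigma,e)$, this is precisely the claimed $(\mu_k,w_k,m_k,h_k,\sigma_k,e_k)\rightharpoonup(\mu,w,m,h,\sigma,e)$. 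As already indicated, the delicate point throughout is the bookkeeping in the perturbation lemma that keeps the $\Lambda$-segments of Corollary~\ref{cor:seg}, and hence the localized plane waves, inside the $(y,t)$-varying open hull uniformly over a small cube; this is exactly where continuity of $q$ and the explicit hull description of Section~\ref{sec:hulls} are used, while the rest is the standard Baire mechanism.
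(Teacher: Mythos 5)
Your argument is correct and is essentially the same as the paper's: the paper's proof of Theorem~\ref{thm:abstract_CI_theorem} is a two-paragraph citation of the Tartar/Baire-category machinery (\cite[Theorem~5.1]{Sz_ipm} for the constant-$q$ case and \cite{Crippa_Gusev_Spirito_Wiedemann} for the $(y,t)$-dependent adaptation), fed by exactly the ingredients you use, namely the localized plane waves of Lemma~\ref{lem:locpw}, the segment Corollary~\ref{cor:seg}, and Hausdorff continuity of $(y,t)\mapsto K_{\gamma,(y,t)}$ inherited from continuity and boundedness of $q$ on $\mathscr U$. You have thus reconstructed in detail the standard argument that the paper defers to the literature.
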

Here we mean that Theorem \ref{thm:main_theorem} holds true word by word except that \eqref{eq:sufficient_condition_for_Linfty_hull} is swapped with \eqref{eq:abstract_condition_for_hull}. For further clarification we point out that the set $K_{\gamma,(x,t)}$ is defined via \eqref{eq:nonlinear_constraints}, \eqref{eq:nonlinear_constraints_with_gamma} with respect to the pressure $q$ induced by $(\mu,w,m,h,\sigma,e)$, cf. Definition \ref{def:weaksolslin}.
\begin{proof}[Proof of Theorem \ref{thm:abstract_CI_theorem}]
We start by observing that if $q$ was constant in $\mathscr{U}$, then \eqref{eq:linear_system_with_p} together with the set of constraints $K_\gamma$ would  fit into the framework stated in the appendix of \cite{Sz_ipm}. In fact, due to Lemma \ref{lem:locpw} and Corollary \ref{cor:seg} above, the result would directly follow  from \cite[Theorem 5.1]{Sz_ipm}.

However, the Tartar framework can easily be adapted to the case when the set of constraints also depends on $(x,t)$. The extra condition which is needed is that the map
$(x,t)\mapsto K_{\gamma,(x,t)}$ is continuous and bounded on $\mathscr{U}$ with respect to the Hausdorff metric $d_{\mathcal{H}}$, see \cite{Crippa_Gusev_Spirito_Wiedemann}. This continuity however, is a consequence of the fact that $q$ is assumed to be continuous and bounded on $\mathscr{U}$, see for instance \cite{Crippa_Gusev_Spirito_Wiedemann,GK-EE,GKSz}.
\end{proof}

\begin{proof}[Proof of Theorem \ref{thm:main_theorem}]
Let $(z,q)$, $z=(\mu,w,m,h,\sigma,e)$ be as stated in Theorem \ref{thm:main_theorem}. In view of Theorem \ref{thm:abstract_CI_theorem} it only remains to verify that there exists a value $\gamma>0$ for which \eqref{eq:abstract_condition_for_hull} holds true.

By Proposition \ref{prop:condition_for_being_in_K_gamma_hull2},  Remark \ref{rem:simpler_gamma}, and since $e,T_+,T_-\in L^\infty(\mathscr{U})$ by assumption, we can pick 
\begin{align*}
    \gamma:=\gamma_\varepsilon\left(2\max\set{\norm{e}_{L^\infty(\mathscr{U})},\norm{T_+}_{L^\infty(\mathscr{U})},\norm{T_-}_{L^\infty(\mathscr{U})}}\right)+1,
\end{align*}
where the function $\gamma_\varepsilon$ is given by Corollary \ref{cor:condition_for_being_in_K_gamma_hull} and $\varepsilon>0$ is taken from \eqref{eq:sufficient_condition_for_Linfty_hull}.
\end{proof}

\section{Application to Rayleigh-Taylor}\label{sec:subsolutions}
We now turn to the construction of subsolutions for the horizontally flat initial data \eqref{eq:flat_initial_data}. The construction is mostly done for the system on the corresponding accelerated domain $\mathscr{D}$, cf. \eqref{eq:definition_accelerated_domain}.

We will begin by summarizing consequences of the one-dimensionality assumption of Definition \ref{def:1Dsubs}. Thereafter, we impose relation \eqref{eq:general_m_with_F} with some general uniformly convex $F$ between $m_n$ and $\mu$, take $\mu$ as the unique entropy solution of the resulting hyperbolic conservation law, and compute the kinetic and potential energy (im)balance in dependence of the chosen constitutive relation $F$. Finally, we show that a) chosing $F_\lambda$, $\lambda\in(0,1/2)$ as stated in Proposition \ref{prop:subsgen} indeed leeds to non-strict  subsolution that in the case of the periodic channel can be perturbed to a strict one, and b) that $F_{1/3}$ maximizes the total energy dissipation among all considered convex relations between $\mu$ and $m_n$.

\subsection{Consequences of one-dimensionality}
Let $\Omega=(0,1)^{n-1}\times(-L,L)$ or $\Omega=\T^{n-1}\times(-L,L)$, $n=2,3$. 
In accordance with Definition \ref{def:1Dsubs} we seek to construct a (non-strict) subsolution $z$ with pressure $q$ and dissipation measure $\nu$ satisfying \eqref{eq:1Dsub1}--\eqref{eq:choice2}. Throughout the construction we will always assume that the mixing zone does not reach the top or bottom boundaries. This means that near those boundaries the subsolution will always be a solution with $w=gte_n$. Consequently, there are no issues with the boundary conditions \eqref{eq:boundary_condition_w}, \eqref{eq:new_boundary_conditions}.

The one-dimensionality assumption in particular means that  $\sigma$ is specified once $\mu$ and $m_n$ are constructed. Indeed, depending on the dimension we have
\begin{align}\label{eq:choice22}
    \sigma=\frac{1}{2}\begin{pmatrix}
    -f & 0\\
    0 &f
    \end{pmatrix},\quad\sigma=\frac{1}{3}\begin{pmatrix}
    -f & 0 &0\\
    0&-f&0\\
    0&0&2f
    \end{pmatrix},
\end{align}
with 
\begin{align}\begin{split}\label{eq:f_rewritten}
    f&=\frac{\mu\rho_-\rho_+g^2t^2-2\rho_-\rho_+ m_n gt+(\rho_++\rho_--\mu)m_n^2}{(\rho_+-\mu)(\mu-\rho_-)}\\
    &=\frac{(\rho_++\rho_--\mu)(m_n-\mu gt)^2}{(\rho_+-\mu)(\mu-\rho_-)}+2gtm_n-\mu g^2t^2.\end{split}
\end{align}

Next, one observes that in the one-dimensional setting the first $n-1$ components of the linear variable momentum balance in \eqref{eq:linear_system_with_p} trivially hold true with this choice of $\sigma$. The remaining component reduces to
\begin{gather}
    \partial_t m_n+\partial_{y_n}\left(\sigma_{nn}+\frac{2}{n}e+q\right)=0.\label{eq:linear_momentum_balance_2}
\end{gather}
Once $m_n$, $\mu$ (and thus $\sigma_{nn}$) and $e$ have been specified, the latter equation can always be solved by a suitable pressure function $q$. 

Similarly, one can take the linear variable energy balance
\begin{align}\label{eq:linear_energy_balance}
    \partial_te+\partial_{y_n}h_n=\nu
\end{align}
as the definition for $\nu$, once $e$ and $h_n$ are specified. In the end of course we have to verify that $\nu\leq 0$ in accordance with \eqref{eq:euler_local_energy_inequality}, \eqref{eq:euler_0g_local_energy_inequality}.

Concerning the equations, it thus for now remains to look at 
\begin{align}\label{eq:remaining_equation_mu}
    \partial_t\mu+\partial_{y_n}m_n&=0.
\end{align}

\subsection{\texorpdfstring{A constitutive law for $m_n$}{A constitutive law}}
Next we turn to the choice of $m_n$. Note that outside the mixing zone we need to have $m_n=\mu gt$. To specify the constitutive law for $m_n$ we therefore make the ansatz
\begin{align}\label{eq:constitutive_law_m}
    m_n=gt\big(\mu+F(\mu)\big)
\end{align}
for some uniformly convex $F:[\rho_-,\rho_+]\rightarrow \R$ of class $\cC^2$ and vanishing at the endpoints $\rho_\pm$.
By \eqref{eq:remaining_equation_mu} this means that $\mu$ is characterized through the hyperbolic conservation law
\begin{align*}
    \partial_t\mu+gt\partial_{y_n}\big(\mu+F(\mu)\big)=0,
\end{align*}
or going back to original coordinates $(x_n,t)=(y_n-gt^2/2,t)$ and $\rho(x_n,t)=\mu(y_n,t)$, that
\begin{align}\label{eq:conservation_law_rho}
    \partial_t\rho+gt\partial_{x_n}\big(F(\rho))=0.
\end{align}

The convexity assumption on $F$ implies that the Riemann problem \eqref{eq:conservation_law_rho} with $\rho(0,\cdot)=\rho_0$ has a unique entropy solution given by 
\begin{align}\label{eq:entropy_solution_rho}
    \rho(x_n,t)=\begin{cases}
    \rho_+,&x_n>F'(\rho_+)\frac{gt^2}{2},\\
    (F')^{-1}\left(\frac{2x_n}{gt^2}\right),&F'(\rho_-)\frac{gt^2}{2}<x_n<F'(\rho_+)\frac{gt^2}{2},\\
    \rho_-,&x_n<F'(\rho_-)\frac{gt^2}{2}.
    \end{cases}
\end{align}
By our assumptions there holds 
\begin{align}\label{eq:assumptions_F}
F'(\rho_-)< 0< F'(\rho_+).
\end{align}

With the help of this entropy solution $\rho$ we define $\mu=\mu_F$ by
\begin{align*}
    \mu(y_n,t)=\rho(y_n-gt^2/2,t).
\end{align*}
As mentioned earlier, the time span $(0,T)$ for the induced subsolution is taken such that the mixing zone does not touch the boundary of the domain, i.e. $T=T(F,L)$ such that
\begin{align*}
    \max\set{-F'(\rho_-),F'(\rho_+)}\frac{gT^2}{2}<L.
\end{align*}

Note here that taking the entropy solution is  itself a second choice, besides imposing the relation \eqref{eq:constitutive_law_m}, due to the fact that other, less regular solutions to the Riemann problem are also conceivable. 

\subsection{Kinetic and potential energy}

In order to make a further selection in the constitutive law \eqref{eq:constitutive_law_m}, we look at the total kinetic and potential energy induced by $F(\mu)$. 
The subsolution energies are defined as
\begin{align*}
    E_{kin}(t)&:=\int_{(-L,L)+\frac{1}{2}gt^2}e(y_n,t)-m_n(y_n,t)gt+\frac{1}{2}\mu(y_n,t) g^2t^2\:dy_n,\\
    E_{pot}(t)&:=\int_{(-L,L)+\frac{1}{2}gt^2}\mu(y_n,t)g\left(y_n-\frac{1}{2}gt^2\right)\:dy_n,\\
    E_{tot}(t)&:=E_{kin}(t)+E_{pot}(t).
\end{align*}
The definition of the potential energy agrees with the usual potential energy in original variables after undoing the transformation \eqref{eq:transf}. The definition of $E_{kin}$ is justified in view of Lemma \ref{lem:total_energy} and the definition of $E_{pot}$, which together precisely say that $E_{tot}(t)$ agrees with the total energy of the later induced solutions $(\rho_{sol},v_{sol})$.

We will now rewrite $E_{kin}(t)$ and $E_{pot}(t)$ in terms of the function $F$. Regarding the kinetic energy we observe that \eqref{eq:sufficient_condition_for_Linfty_hull},  \eqref{eq:form_of_tilde_M_in_subs} and \eqref{eq:choice2} imply
\begin{align*}
    e\geq \frac{n}{2}\lamax\left(\tilde{M}(z)\right)=\frac{1}{2}f(t,\mu,m_n).
\end{align*}
For convex integration the gap between $e$ and $\frac{1}{2}f$ has to be strict inside the mixing zone, but can be taken arbitrarily small.
It follows that
\begin{align*}
    E_{kin}(t)&\geq \int_{(-L,L)+\frac{1}{2}gt^2}\frac{1}{2}f(t,\mu,m_n)-m_ngt+\frac{1}{2}\mu g^2t^2\:dy_n=:E_{kin}^*(t).
\end{align*}
Note that $E_{kin}(t)=E_{kin}^*(t)$ for non-strict subsolutions.

Plugging our ansatz \eqref{eq:constitutive_law_m} into \eqref{eq:f_rewritten}, one obtains
\begin{align*}
    E_{kin}^*(t)&=\frac{g^2t^2}{2}\int_{(-L,L)+\frac{1}{2}gt^2}\frac{\rho_++\rho_--\mu}{(\rho_+-\mu)(\mu-\rho_-)}F(\mu)^2\:dy_n=\frac{g^2t^2}{2}\int_{-L}^La(\rho)^{-1}F(\rho)^2\:dx_n.
\end{align*}
Here we switched to original coordinates and abbreviated
\begin{align}\label{eq:definition_of_a}
    a(r):=\frac{(\rho_+-r)(r-\rho_-)}{\rho_++\rho_--r}.
\end{align}

Since $F$ is Lipschitz continuous and vanishing at the endpoints $\rho_\pm$, the integrant in $E_{kin}^*$ is vanishing outside the mixing zone associated with $\rho$ given through \eqref{eq:entropy_solution_rho}. Inside the mixing zone we use
\[
x_n=\frac{gt^2}{2}F'(\rho(x_n,t))
\]
as a change of coordinates to find 
\begin{align}\label{eq:expression_E_kin_star}
    E_{kin}^*(t)&=\frac{g^3t^4}{4}\int_{\rho_-}^{\rho_+}a(r)^{-1}F(r)^2F''(r)\:dr=-\frac{g^3t^4}{4}\int_{\rho_-}^{\rho_+}\frac{d}{dr}\left(a(r)^{-1}F(r)^2\right)F'(r)\:dr.
\end{align}

In a similar way we rewrite the difference of total potential energy. This time we can directly start in $(x,t)$ variables to find 
\begin{align*}
    E_{pot}(t)-E_{pot}(0)&=\int_{-L}^L(\rho-\rho_0)gx_n\:dx_n\\
    &=\frac{g^3t^4}{4}\int_{\rho_-}^{\rho_+}\big(r-\rho_0(F'(r)gt^2/2)\big)F'(r)F''(r)\:dr.
\end{align*}
By \eqref{eq:assumptions_F} and the monotonicity of $F$ it follows that \begin{align}\label{eq:expression_E_pot}
    E_{pot}(t)-E_{pot}(0)&=-\frac{g^3t^4}{4}\int_{\rho_-}^{\rho_+}\frac{1}{2}F'(r)^2\:dr.
\end{align}

Now we are able to estimate the total energy balance from below
\begin{align}
\begin{split}\label{eq:energy_balance_with_F}
    E_{tot}(t)-&E_{tot}(0)\geq E^*_{kin}(t)+E_{pot}(t)-E_{pot}(0)\\
    &=-\frac{g^3t^4}{4}\int_{\rho_-}^{\rho_+}\frac{d}{dr}\left(a(r)^{-1}F(r)^2\right)F'(r)+\frac{1}{2}F'(r)^2\:dr.
\end{split}
\end{align}
Again, there holds equality for non-strict subsolutions. Note that this shows the first part of Proposition \ref{prop:energies_of_subsolution_sec2}. We turn to the second part, i.e. the maximization of the functional on the right-hand side with respect to $F$, in Section \ref{sec:variational_problem}. 

At this point one observes even without a detailed investigation of this variational problem that there is a choice that stands out, namely
\begin{align*}
    F_\lambda(r):=-\lambda a(r)=-\lambda \frac{(\rho_+-r)(r-\rho_-)}{\rho_++\rho_--r},~\lambda>0,
\end{align*}
which can be checked to be indeed a convex function vanishing at $\rho_\pm$. This choice
turns \eqref{eq:energy_balance_with_F} into
\begin{align}\begin{split}\label{eq:total_energy_balance}
    E^*_{kin}(t)+E_{pot}(t)-E_{pot}(0)&=\frac{g^3t^4}{4}\left(\lambda-\frac{1}{2}\right)\lambda^2\int_{\rho_-}^{\rho_+}a'(r)^2\:dr\\
    &=\frac{g^3t^4(\rho_+-\rho_-)^3}{12\rho_+\rho_-}\left(\lambda-\frac{1}{2}\right)\lambda^2.
\end{split}
\end{align}
We also note that
\begin{align}\label{eq:computation_E_pot}
    E_{pot}(t)-E_{pot}(0)=-\frac{\lambda^2g^3t^4}{24}\frac{(\rho_+-\rho_-)^3}{\rho_+\rho_-}.
\end{align}

In the following subsection we will study the corresponding local energy balance and thus show that we indeed obtain a non-strict subsolution induced by $F_\lambda$, $\lambda\leq \frac{1}{2}$.

\subsection{\texorpdfstring{Non-strict subsolution for $F_\lambda$}{Local dissipation}}

Fix $\lambda\in(0,\frac{1}{2}]$ and consider $\mu$, $m$ induced by $F_\lambda$ as described.
In accordance with \eqref{eq:nonstrictsubs} for non-strict one-dimensional subsolutions, we further set
\begin{align}\label{eq:edef_hdef}
    e:=\frac{1}{2}f,\quad h:=\left((e+q)gt+\frac{T_+-T_-}{\rho_+-\rho_-}(m_n-\mu gt)\right)e_n,
\end{align}
where $T_\pm=T_\pm(z)$ have been defined in \eqref{eq:traces}. Note that now all components of the subsolution, as well as the pressure $q$ and $\nu$ are determined, cf. \eqref{eq:linear_momentum_balance_2}, \eqref{eq:linear_energy_balance}. Our goal is to compute the latter.

\begin{lemma}\label{lem:computing_nu}
There holds
\begin{align*}
    \nu=(1-2\lambda)g(m_n-\mu gt)=(1-2\lambda)g^2tF_\lambda(\mu)\leq 0.
\end{align*}
\end{lemma}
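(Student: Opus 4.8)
The plan is to compute $\nu$ directly from the linear energy balance \eqref{eq:linear_energy_balance}, $\nu=\partial_t e+\partial_{y_n}h_n$, with $e$ and $h_n$ the quantities prescribed in \eqref{eq:edef_hdef}. It is convenient to pass to the original coordinates $x_n=y_n-\tfrac12gt^2$, in which $\rho(x_n,t):=\mu(y_n,t)$ is the entropy (rarefaction) solution of the convex scalar law \eqref{eq:conservation_law_rho}, so that $\partial_t\rho=-gtF_\lambda'(\rho)\partial_{x_n}\rho=\lambda gt\,a'(\rho)\partial_{x_n}\rho$, and in which $\nu=\partial_t\hat e+\partial_{x_n}(\hat h_n-gt\,\hat e)$, a hat denoting a function expressed in $(x_n,t)$. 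This is a direct, if somewhat lengthy, computation; the work is in organizing it.

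First I would record the explicit $(\mu,t)$–dependence of all ingredients. From $m_n=gt(\mu+F_\lambda(\mu))$ with $F_\lambda=-\lambda a$ one has $\Delta:=m_n-\mu gt=gtF_\lambda(\mu)=-\lambda gt\,a(\mu)$, and by \eqref{eq:f_rewritten}, $f=a(\mu)^{-1}\Delta^2+2gt\Delta+\mu g^2t^2=g^2t^2\big(\mu+\lambda(\lambda-2)a(\mu)\big)$, hence $e=\tfrac12f$. Next one computes $T_\pm(z)$ from \eqref{eq:traces}: since $w=gte_n$ and $m=m_ne_n$ one gets $T_\pm(z)=\tfrac{\rho_\pm}{2}\big(gt+\tfrac{\Delta}{\mu-\rho_\mp}\big)^2$, and inserting $a(\mu)=\tfrac{(\rho_+-\mu)(\mu-\rho_-)}{\rho_++\rho_--\mu}$ together with the elementary identities
\[
\rho_+\rho_-=(\rho_++\rho_--\mu)\big(\mu-a(\mu)\big),\qquad a'(\mu)=1-\frac{\rho_+\rho_-}{(\rho_++\rho_--\mu)^2},
\]
this collapses to the clean formula $\tilde e(z)=\tfrac{T_+(z)-T_-(z)}{\rho_+-\rho_-}=\tfrac{g^2t^2}{2}\big(1-2\lambda+\lambda^2a'(\mu)\big)$, so that $h_n-(e+q)gt=\tilde e(z)\Delta$ is likewise explicit in $(\mu,t)$.

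Then I would substitute $\hat e$ and $\hat h_n=gt\,\hat e+gt\,\hat q+\tilde e(z)\hat\Delta$ into $\nu=\partial_t\hat e+\partial_{x_n}(\hat h_n-gt\hat e)=\partial_t\hat e+gt\,\partial_{x_n}\hat q+\partial_{x_n}(\tilde e(z)\hat\Delta)$ and eliminate the pressure via the momentum equation \eqref{eq:linear_momentum_balance_2}, which in view of the identity $\sigma_{nn}+\tfrac2n e=f=2e$ (valid in both $n=2,3$ for the canonical $\sigma$ of \eqref{eq:choice22}) reads $\partial_tm_n+\partial_{y_n}(2e+q)=0$, so that $gt\,\partial_{x_n}\hat q=-gt\,\partial_t\hat m_n+g^2t^2\partial_{x_n}\hat m_n-2gt\,\partial_{x_n}\hat e$. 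Using $\partial_t\rho=\lambda gt\,a'(\rho)\partial_{x_n}\rho$ to remove all time derivatives of $\rho$, every term becomes a function of $(\rho,t)$ times either $1$ or $\partial_{x_n}\rho$. The $\partial_{x_n}\rho$–free terms add up to $\lambda(\lambda-1)g^2t\,a(\rho)$, while, after the terms linear and quadratic in $a'(\rho)$ cancel, the coefficient of $g^3t^3\partial_{x_n}\rho$ reduces to $-\tfrac{\lambda^3}{2}a(\rho)a''(\rho)$. On the mixing zone the rarefaction profile \eqref{eq:entropy_solution_rho} gives $x_n=\tfrac{gt^2}{2}F_\lambda'(\rho)$, hence $\partial_{x_n}\rho=\tfrac{2}{gt^2F_\lambda''(\rho)}=-\tfrac{2}{\lambda gt^2a''(\rho)}$, so this contribution becomes $\lambda^2g^2t\,a(\rho)$ (the factor $a''$ cancels), and altogether $\nu=\big(\lambda(\lambda-1)+\lambda^2\big)g^2t\,a(\rho)=\lambda(2\lambda-1)g^2t\,a(\rho)$. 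Outside the mixing zone $\mu\equiv\rho_\pm$ is locally constant and $\Delta\equiv0$, so both sides vanish there, and since $\mu$ (hence $e$ and $h_n$) is continuous across the mixing–zone boundary, $\nu$ carries no singular part. Rewriting $\lambda(2\lambda-1)g^2t\,a(\mu)=(1-2\lambda)g^2t\,F_\lambda(\mu)=(1-2\lambda)g(m_n-\mu gt)$ yields the stated identity, and $\nu\le0$ follows from $\lambda\in(0,\tfrac12]$, $t\ge0$ and $a\ge0$ on $[\rho_-,\rho_+]$.

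The main obstacle is exactly this last bookkeeping: a priori $\partial_t\hat e$ and $\partial_{x_n}(\tilde e(z)\hat\Delta)$ are each of order $t^3$ times $\partial_{x_n}\rho$, whereas the claimed $\nu$ is only of order $t$; dropping the order by two relies on the precise cancellation of the $a'(\rho)$– and $a'(\rho)^2$–terms (which in turn hinges on the clean form of $\tilde e(z)$ obtained above) together with the explicit rarefaction relation $\partial_{x_n}\rho=2/(gt^2F_\lambda''(\rho))$, which removes the stray factor $a''(\rho)$. Keeping the algebraic identities for $a$ straight is where the care is needed.
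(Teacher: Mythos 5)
Your proof is correct and is essentially the same approach as the paper's: both eliminate the pressure via the one-dimensional momentum balance \eqref{eq:linear_momentum_balance_2}, express everything as a function of $(\rho,t)$ using the rarefaction structure of the entropy solution, and observe that all high-order terms cancel, leaving $\lambda(2\lambda-1)g^2 t\,a(\rho)$. Working in original $(x_n,t)$ coordinates and packaging the $T_\pm$ difference as $\tilde e(z)=\tfrac{g^2t^2}{2}\big(1-2\lambda+\lambda^2 a'(\mu)\big)$ makes the polynomial-in-$a'(\rho)$ cancellation slightly more transparent than the paper's version using \eqref{eq:nucalc}--\eqref{eq:expression_y}, but the method is the same.
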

\begin{proof}
Let us go back to the $(t,y_n)$ coordinates to obtain from \eqref{eq:f_rewritten} that, with the choice of $F_\lambda$ and $m_n=gt(\mu+F_\lambda(\mu))$, we have
\begin{align}\label{eq:f_in_computation_lemma}
    f=gt m_n + (1-\lambda)gt(m_n-\mu gt),
\end{align}
as well as
\begin{align}\label{eq:Tpm_in_computation_lemma}
    T_\pm=g^2 t^2 \frac{\rho_\pm}{2}\left(1-\lambda\frac{\rho_\pm-\mu}{\rho_++\rho_--\mu}\right)^2,
\end{align}
and hence
\begin{align*}
    \frac{T_+-T_-}{\rho_+-\rho_-}=\frac{g^2 t^2}{2}\left((1-\lambda)^2-\lambda^2 \frac{\rho_+\rho_-}{(\rho_++\rho_--\mu)^2} \right).
\end{align*}
Consequently
\begin{align}\label{eq:hdef}
    h= \left((e+q)gt +\frac{g^2 t^2}{2}\left((1-\lambda)^2-\lambda^2 \frac{\rho_+\rho_-}{(\rho_++\rho_--\mu)^2} \right)(m_n-\mu gt) \right)e_n.
\end{align}
Now observe that \eqref{eq:choice22}, \eqref{eq:linear_momentum_balance_2} and \eqref{eq:edef_hdef} yield
\begin{align}\begin{split}\label{eq:derivative_e_and_q}
    \partial_{y_n}(e+q)&=\partial_{y_n}\left(\frac{2}{n}e+q\right)+\frac{n-2}{n}\partial_{y_n}e=-\partial_tm_n-\partial_{y_n}\sigma_{nn}+\frac{n-2}{n}\partial_{y_n}e\\
    &=-\partial_tm_n-\frac{1}{2}\partial_{y_n}f.
    \end{split}
\end{align}
This allows to calculate
\begin{align*}
    \nu=\partial_t e+\partial_{y_n} h_n =\frac{1}{2}\partial_t f&-gt \partial_t m_n-\frac{1}{2}gt\partial_{y_n}f\\
    +&\frac{g^2 t^2}{2}\left((1-\lambda)^2-\lambda^2 \frac{\rho_+\rho_-}{(\rho_++\rho_--\mu)^2} \right)\partial_{y_n}(m_n-\mu gt)\\
    -&\frac{g^2 t^2}{2}\lambda^2\rho_+\rho_-\partial_{y_n}\left( \frac{1}{(\rho_++\rho_--\mu)^2} \right)(m_n-\mu gt).
\end{align*}
A further simple calculation based on \eqref{eq:f_in_computation_lemma} and $\partial_t\mu+\partial_{y_n}m_n=0$ yields
\begin{align*}
    \frac{1}{2}\partial_t f-gt \partial_t m_n-\frac{1}{2}gt\partial_{y_n}f=\frac{1}{2}g(2-\lambda)(m_n-&\mu gt)-\frac{\lambda}{2}gt\partial_t(m_n-\mu gt)\\&-\frac{g^2t^2}{2}(1-\lambda)\partial_{y_n}(m_n-\mu gt),
\end{align*}
from where using that 
$$
m_n-\mu gt=F_\lambda(\mu) gt,\quad\partial_t(m_n-\mu gt)=\frac{m_n-\mu gt}{t}+gt\partial_t(F_\lambda(\mu)),
$$ plugging back the expression for $F_\lambda$ and rearranging appropriate terms, we get that
\begin{align}\label{eq:nucalc}
\begin{split}
    \nu&=-\lambda(1-\lambda)g^2 t \frac{(\rho_+-\mu)(\mu-\rho_-)}{\rho_++\rho_--\mu}+\lambda^2 \frac{g^2 t^2}{2}\partial_t\left(\frac{(\rho_+-\mu)(\mu-\rho_-)}{\rho_++\rho_--\mu} \right)\\
    &\phantom{=}+\lambda^2 \frac{g^3 t^3}{2} \left(1-\lambda+\lambda \frac{\rho_+\rho_-}{(\rho_++\rho_--\mu)^2} \right)\partial_{y_n}\left(\frac{(\rho_+-\mu)(\mu-\rho_-)}{\rho_++\rho_--\mu} \right)\\
    &\phantom{=}+\lambda^3 \frac{g^3 t^3}{2} \rho_+\rho_- \frac{(\rho_+-\mu)(\mu-\rho_-)}{\rho_++\rho_--\mu}\partial_{y_n}\left( \frac{1}{(\rho_++\rho_--\mu)^2} \right).
    \end{split}
\end{align}
Note that this implies $\nu=0$ outside the mixing zone.

Let us now use the fact that $\mu$ is an entropy solution of
$$\partial_t\mu+gt\partial_{y_n}\big(\tilde F_\lambda(\mu)\big)=0,$$
with $\tilde F_\lambda(r)=r+F_\lambda(r)$. Hence we have
$$\mu(t,y_n)=(\tilde F'_\lambda)^{-1}\left(\frac{2y_n}{gt^2}\right),$$
inside the mixing zone. 

We further have
\begin{align*}
    \tilde F'_\lambda(\mu)=1-\lambda + \lambda \frac{ \rho_+\rho_-}{(\rho_++\rho_--\mu)^2},\quad \tilde F''_\lambda(\mu)=F''_\lambda(\mu)=2\lambda \frac{ \rho_+\rho_-}{(\rho_++\rho_--\mu)^3}.
\end{align*}
Using the rule for differentiating an inverse function, one then gets
\begin{align*}
    &\partial_t \mu = \frac{1}{F''_\lambda(\mu)}(-2)\frac{2y_n}{gt^3}=-\frac{2y_n (\rho_++\rho_--\mu)^3}{\lambda  \rho_+\rho_- gt^3},\\
    &\partial_{y_n} \mu=\frac{1}{F''_\lambda(\mu)}\frac{2}{gt^2}=\frac{ (\rho_++\rho_--\mu)^3}{\lambda  \rho_+\rho_- gt^2}.
\end{align*}
This allows us to compute
\begin{align*}
 &\partial_{y_n}\left( \frac{1}{(\rho_++\rho_--\mu)^2} \right)=\frac{ 2}{\lambda  \rho_+\rho_- gt^2},\\
    &\partial_{y_n}\left(\frac{(\rho_+-\mu)(\mu-\rho_-)}{\rho_++\rho_--\mu} \right)=\left(1-\frac{ \rho_+\rho_-}{(\rho_++\rho_--\mu)^2} \right)\frac{ (\rho_++\rho_--\mu)^3}{\lambda  \rho_+\rho_- gt^2},\\
    &\partial_{t}\left(\frac{(\rho_+-\mu)(\mu-\rho_-)}{\rho_++\rho_--\mu} \right)=-\left(1-\frac{ \rho_+\rho_-}{(\rho_++\rho_--\mu)^2} \right)\frac{2y_n (\rho_++\rho_--\mu)^3}{\lambda  \rho_+\rho_- gt^3}.
\end{align*}
Plugging back into \eqref{eq:nucalc} we get
\begin{align*}
    \nu=&-\lambda(1-\lambda)g^2 t \frac{(\rho_+-\mu)(\mu-\rho_-)}{\rho_++\rho_--\mu}\\&-\lambda^2 \frac{g^2 t^2}{2}\left(1-\frac{ \rho_+\rho_-}{(\rho_++\rho_--\mu)^2} \right)\frac{2y_n (\rho_++\rho_--\mu)^3}{\lambda  \rho_+\rho_- gt^3}\\
    &+\lambda^2 \frac{g^3 t^3}{2} \left(1-\lambda+\lambda \frac{\rho_+\rho_-}{(\rho_++\rho_--\mu)^2} \right)\left(1-\frac{ \rho_+\rho_-}{(\rho_++\rho_--\mu)^2} \right)\frac{ (\rho_++\rho_--\mu)^3}{\lambda  \rho_+\rho_- gt^2}\\
    &+\lambda^3 \frac{g^3 t^3}{2} \rho_+\rho_- \frac{(\rho_+-\mu)(\mu-\rho_-)}{\rho_++\rho_--\mu}\frac{ 2}{\lambda  \rho_+\rho_- gt^2}.
\end{align*}
One can observe that the middle two lines above fall out. Indeed, from $\mu(t,y_n)=(\tilde F'_\lambda)^{-1}\left(\frac{2y_n}{gt^2}\right)$ one has
\begin{align}\label{eq:expression_y}
-\frac{2y_n}{gt^2}+1-\lambda+\lambda \frac{ \rho_+\rho_-}{(\rho_++\rho_--\mu)^2}=0.
\end{align}
Hence we are left with
\begin{align*}\nu&=-\lambda(1-\lambda)g^2 t \frac{(\rho_+-\mu)(\mu-\rho_-)}{\rho_++\rho_--\mu}+\lambda^3 \frac{g^3 t^3}{2} \rho_+\rho_- \frac{(\rho_+-\mu)(\mu-\rho_-)}{\rho_++\rho_--\mu}\frac{ 2}{\lambda  \rho_+\rho_- gt^2}\\
&=\lambda(2\lambda-1)g^2 t \frac{(\rho_+-\mu)(\mu-\rho_-)}{\rho_++\rho_--\mu},
\end{align*}
which is indeed non-positive for $\lambda\in[0,\frac{1}{2}]$, and in agreement with the stated expression.
\end{proof}

\begin{proof}[Proof of Proposition \ref{prop:subsgen}.] Since $\mu$ is given as a rarefaction wave, it is continuous except at the initial time at the initial interface. Since $m$ is defined with an additional factor $t$, it is continuous everywhere. Next it is easy to see that $f$, cf. \eqref{eq:f_in_computation_lemma}, is continuous across the boundary of the mixing zone with $f=\mu g^2t^2$ outside. Thus $e,\sigma\in\cC^0(\overline{\mathscr{D}})$.
After integrating \eqref{eq:linear_momentum_balance_2} in $y_n$ and using 
\begin{align*}
    \partial_tm_n-g(\mu+F_\lambda(\mu))=g^2t^2(1+F_\lambda'(\mu))^2\partial_{y_n}\mu,
\end{align*}
one can also see that $q$ and thus $h$ are continuous on all of $\overline{\mathscr{D}}$. The continuity of $\nu$ follows from Lemma \ref{lem:computing_nu}.

Moreover, the properties of a one-dimensional non-strict subsolution follow by construction. For \eqref{eq:Tpm_and_q_bounded}, see \eqref{eq:Tpm_in_computation_lemma}. Finally \eqref{eq:total_enerrgy_in_subs_prop} has been computed in \eqref{eq:total_energy_balance}.
\end{proof}

\subsection{Passing to a strict subsolution}

Let us now consider only the periodic case $\Omega=\T^{n-1}\times(-L,L)$ and $\lambda\in(0,1/2)$ fixed.
In order to have a strict subsolution inside the mixing zone, we may modify the above construction simply by redefining
\begin{align*}
    e:=\frac{1}{2}f+\delta t^2(\rho_+-\mu)^2(\mu-\rho_-)^2,
\end{align*}
for $\delta>0$ small enough. For the rest of the construction we leave $\mu,m_n,\sigma$ unchanged. As such also $T_\pm$, cf. \eqref{eq:Tpm_in_computation_lemma}, remains unchanged. We define once more $q$ through \eqref{eq:linear_momentum_balance_2} and $h$  via \eqref{eq:hdef},  both with respect to the modified $e$.
With these choices \eqref{eq:sufficient_condition_for_Linfty_hull} holds true, for instance with $\varepsilon=1$.

Observe that although the expression for $h$ depends on $e$, $\partial_{y_n} h_n$ does not, since $\partial_{y_n}(e+q)$ remains unchanged in view of \eqref{eq:derivative_e_and_q}.
We then have, using the calculation of the previous subsection, that
\begin{align*}
    \nu:=&\partial_t e+\partial_{y_n} h_n =\lambda(2\lambda-1)g^2 t \frac{(\rho_+-\mu)(\mu-\rho_-)}{\rho_++\rho_--\mu}+2\delta t(\rho_+-\mu)^2(\mu-\rho_-)^2 \\
    &+2\delta t ^2(\rho_+-\mu)(\mu-\rho_-)(\rho_++\rho_--2\mu)\left(-\frac{2y_n(\rho_++\rho_--\mu)^3}{\lambda\rho_+\rho_- gt^3}\right)
    \\=& \left[ 2\delta\left( (\rho_+-\mu)(\mu-\rho_-)-(\rho_++\rho_--2\mu)\frac{2y_n(\rho_++\rho_--\mu)^3}{\lambda\rho_+\rho_- gt^2} \right) \right.\\&+ \left. \frac{\lambda(2\lambda-1)g^2}{\rho_++\rho_--\mu}\right]t (\rho_+-\mu)(\mu-\rho_-).
\end{align*}
By \eqref{eq:expression_y} we may write
\begin{align*}
    \frac{2y_n(\rho_++\rho_--\mu)^3}{\lambda\rho_+\rho_- gt^2}=& \frac{(\rho_++\rho_--\mu)^3}{\lambda\rho_+\rho_- }\left(1-\lambda+\lambda \frac{ \rho_+\rho_-}{(\rho_++\rho_--\mu)^2} \right)\\=&(1-\lambda)\frac{(\rho_++\rho_--\mu)^3}{\lambda\rho_+\rho_- }+\rho_++\rho_--\mu,
\end{align*}
which is bounded for $\mu\in[\rho_-,\rho_+]$, and consequently we get
$$ 0<C:=\max_{\mu\in[\rho_-,\rho_+]}\left|(\rho_+-\mu)(\mu-\rho_-)-(\rho_++\rho_--2\mu)\frac{2y_n(\rho_++\rho_--\mu)^3}{\lambda\rho_+\rho_- gt^2}\right|<+\infty.$$
We may pick $0<\delta<-\frac{1}{2C}\frac{\lambda(2\lambda-1)g^2}{\rho_-}$ to obtain that
\begin{align*}
    \nu \leq t (\rho_+-\mu)(\mu-\rho_-) \left( 2C + \frac{\lambda(2\lambda-1)g^2}{\rho_++\rho_--\mu}\right)\leq 0.
\end{align*}
This concludes the construction of a strict subsolution which globally dissipates energy, associated with any $F_\lambda$ for $\lambda\in(0,1/2)$. Finally, concerning the error with respect to the old dissipation measure we may observe that
\begin{align*}
    |\nu-(1-2\lambda)g(m_n-\mu gt)|\leq 2C\delta t(\rho_+-\mu)(\mu-\rho_-).
\end{align*}
This concludes the proof of Lemma \ref{lem:strict_subs}.

\subsection{\texorpdfstring{Variational problem for $F$}{Variational problem for F}}\label{sec:variational_problem}

This subsection further emphasizes the special role of the family $F_\lambda$, or rather the particular member $F_{\frac{1}{3}}$, among all non-strict one-dimensional subsolutions satisfying a constitutive relation 
\[
m_n=gt(\mu +F(\mu))
\]
for some $F\in\cC^2([\rho_-,\rho_+])$ uniformly convex and vanishing at the endpoints. We already have seen in \eqref{eq:energy_balance_with_F} that the associated total energy balance reads
\begin{align*}
    E^*_{kin}(t)+E_{pot}(t)-E_{pot}(0)=-\frac{g^3t^4}{4}\cI(F),
\end{align*}
where
\begin{gather*}
\cI(F):=\int_{\rho_-}^{\rho_+}\frac{d}{dr}\left(a(r)^{-1}F(r)^2\right)F'(r)+\frac{1}{2}F'(r)^2\:dr,\\
a(r)=\frac{(\rho_+-r)(r-\rho_-)}{\rho_++\rho_--r}.
\end{gather*}
Imposing at this point maximal total energy dissipation as a selection criterion amounts to maximizing the functional $\cI$. The main goal of this subsection is to show that this variational problem has $F_{\frac{1}{3}}$ as its unique global maximizer. This then completes the proof of Proposition \ref{prop:energies_of_subsolution_sec2}.

We abbreviate the set of functions containing all convex (not necessarily uniform and not necessarily $\cC^2$) $F:[\rho_-,\rho_+]\rightarrow\R$ with $F(\rho_-)=F(\rho_+)=0$ by $\cF$.
\begin{proposition}\label{prop:variational_problem}
There holds
\[
\sup_{F\in\cF}\cI(F)=\cI(-a/3)=\frac{1}{54}\int_{\rho_-}^{\rho_+}a'(r)^2\:dr=\frac{1}{54}\frac{(\rho_+-\rho_-)^3}{3\rho_+\rho_-}
\]
and $F=-a/3$ is the only function in $\cF$ for which the supremum is attained.
\end{proposition}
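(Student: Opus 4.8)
The plan is to treat $\cI$ as a functional on the convex cone $\cF$ and exploit that its leading quadratic term is essentially a weighted Dirichlet-type energy. First I would simplify the expression for $\cI(F)$. Integrating by parts the term $\frac{d}{dr}(a^{-1}F^2)F'$ and using $F(\rho_\pm)=0$, one rewrites
\begin{align*}
\cI(F)=\int_{\rho_-}^{\rho_+}\frac{1}{2}F'(r)^2-a(r)^{-1}F(r)^2F''(r)\:dr
\end{align*}
in the $\cC^2$ case; more robustly, since $F$ is convex, $F''\ge 0$ as a measure, and the map $F\mapsto\cI(F)$ extends continuously to all of $\cF$. The key structural observation I would make is that for the one-parameter family $F=-\lambda a$ one has $\cI(-\lambda a)=(\lambda^2-\tfrac12\lambda^{?})\cdots$ — more precisely, from \eqref{eq:total_energy_balance} we already know $\cI(-\lambda a)=(\tfrac12-\lambda)\lambda^2\int a'^2$, a cubic in $\lambda$ maximized at $\lambda=\tfrac13$, giving the value $\tfrac{1}{54}\int a'^2$. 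So the content of the proposition is that \emph{no} convex $F$ beats this scalar optimum.

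The main step is a convexity/compactness argument on $\cF$. I would first establish an a priori bound: if $\cI(F)$ is close to the supremum, then $\|F\|$ is controlled (in $W^{1,2}$, say), because the $+\tfrac12\int F'^2$ term is coercive while the cross term $\frac{d}{dr}(a^{-1}F^2)F'$ can be bounded by $\varepsilon\int F'^2+C_\varepsilon\int F^2$ using Young's inequality together with the fact that $a^{-1}$ blows up only at the endpoints where $F$ vanishes — so $a^{-1}F^2$ stays bounded and $\frac{d}{dr}(a^{-1}F^2)$ is controlled by $F'$ plus lower order. This yields that maximizing sequences are bounded in $W^{1,2}(\rho_-,\rho_+)$, hence (up to subsequence) converge uniformly, and the limit is again convex with vanishing endpoint values, so lies in $\cF$; lower/upper semicontinuity of $\cI$ along such sequences gives existence of a maximizer $F^\ast\in\cF$.

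Then I would derive the Euler--Lagrange condition. Since $\cF$ is a convex set, at a maximizer $F^\ast$ one has $\frac{d}{d\epsilon}\big|_{\epsilon=0^+}\cI\big((1-\epsilon)F^\ast+\epsilon G\big)\le 0$ for every $G\in\cF$; because $\cF$ is also a cone containing $\pm$ small perturbations in suitable directions (any $F^\ast+s\,\psi$ with $\psi$ vanishing at the endpoints and $s$ small preserves convexity if $F^\ast$ is uniformly convex — which I would verify $F^\ast$ must be, as a strictly-convex-improving perturbation is always available otherwise), the first variation must vanish: $\int \big(\frac{d}{dr}(a^{-1}(F^\ast)^2)\big)'\psi$-type terms assemble into a second-order ODE for $F^\ast$. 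I expect this ODE, after simplification using the explicit $a(r)=\frac{(\rho_+-r)(r-\rho_-)}{\rho_++\rho_--r}$, to force $F^\ast$ proportional to $a$, and then the scalar computation pins $\lambda=\tfrac13$. The hard part will be the a priori $W^{1,2}$ bound and, relatedly, justifying that the endpoint singularity of $a^{-1}$ is genuinely harmless — i.e., that $a(r)^{-1}F(r)^2$ and its derivative are integrable and bounded for every $F\in\cF$, which one gets from $F(r)=O(\dist(r,\{\rho_\pm\}))$ for convex $F$ with those boundary values, since then $a^{-1}F^2=O(\dist(r,\{\rho_\pm\}))$ near the endpoints. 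Uniqueness follows because, once the Euler--Lagrange ODE has only the solutions $F=-\lambda a$ within $\cF$ and $\cI$ restricted to this line is strictly concave near $\lambda=\tfrac13$, the maximizer is unique; alternatively, a direct second-variation computation shows $\cI$ is strictly concave on $\cF$ in the relevant directions, giving uniqueness outright.
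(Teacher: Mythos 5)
Your proposal takes a genuinely different route from the paper: a direct method in the calculus of variations (a priori bounds, compactness, Euler--Lagrange equation) versus the paper's elementary pointwise/integral estimate after substituting $F=-aH$ and splitting the domain into $\{H>1/4\}$ and $\{H<1/4\}$. Unfortunately your route has several gaps that are not minor.

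First, the a priori $W^{1,2}$ bound is not established by the sketch you give. Expanding the cross term,
\begin{align*}
\cI(F)=\int_{\rho_-}^{\rho_+}\left(\tfrac12+2a^{-1}F\right)(F')^2-a^{-2}a'F^2F'\:dr,
\end{align*}
and since $F\le 0$, the coefficient $\tfrac12+2a^{-1}F$ is \emph{not} sign-definite — it becomes negative precisely where $-F>a/4$, i.e.\ where $H>1/4$. So the ``coercive $+\tfrac12\int(F')^2$'' you rely on is cancelled or overwhelmed exactly on the region where $F$ is large, and Young's inequality against $\int F^2$ does not fix this, since $\|F\|_{L^\infty}$ is unbounded on $\cF$. (It is telling that the paper's proof splits the domain precisely at $H=1/4$ — this is where the quadratic form in $F'$ changes sign.)

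Second, even granting a maximizer $F^\ast$, passing to the Euler--Lagrange equation requires two-sided admissible variations. Your claim that $F^\ast$ must be uniformly convex, so that $F^\ast+s\psi$ stays in $\cF$ for small $s$ of either sign, is asserted but not proved, and the functional $\cI$ does not obviously penalize flat pieces of $F$. If the convexity constraint is active somewhere, one only gets a variational \emph{inequality}, not the equation.

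Third, even assuming the Euler--Lagrange equation $2a^{-1}FF''+(a^{-1}F^2)''+F''=0$ holds (which does admit $F=-a/3$ as a solution, as one checks from $(3\lambda^2-\lambda)a''=0$), this is a second-order nonlinear ODE and you have not argued that $F=-\lambda a$ are its only solutions compatible with $F(\rho_\pm)=0$ and $F$ convex, nor that a critical point with the stated value is in fact the global maximum on $\cF$ rather than a local one or a saddle. Your fallback appeal to ``strict concavity of $\cI$ on $\cF$'' would indeed give uniqueness outright, but $\cI$ contains a cubic term $2a^{-1}F(F')^2$ and is visibly not concave on all of $\cF$; this needs a real argument, not a remark.

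For comparison, the paper sidesteps all of this. Writing $F=-aH$ with $H\ge 0$ and $\cI(-aH)=\cJ(H)$, it observes that on $\{H\ge 1/4\}$ the gradient term has a nonpositive coefficient $\tfrac12 a^2(1-4H)\le 0$ while $-aa''\ge 0$, so the integrand is bounded pointwise by $-\tfrac12 aa''\sup_{x\ge 0}x^2(1-2x)=-\tfrac{1}{54}aa''$, matching $\cJ(1/3)$ exactly. On $\{H<1/4\}$ it translates back to $\cI$ on each maximal subinterval, uses convexity of $F$ there (so $-F''\le 0$ as a measure) together with monotonicity of $x\mapsto x^2/\alpha+x/2$ on $[-\alpha/4,\infty)$, and a boundary-term bookkeeping via \eqref{eq:relation_between_restricted_functionals} to conclude strict inequality unless the region is empty. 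That argument is self-contained, requires no compactness or regularity beyond the structure already in $\cF$, and delivers uniqueness in the same sweep. You should either supply the missing coercivity/strict-convexity lemmas, or switch to a direct estimate along the lines of the paper.
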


We begin with some basic observations. First of all we note that $a$ is a smooth concave function on $[\rho_-,\rho_+]$ with simple zeros at the end points.

Let $F\in\cF$. Since $F:[\rho_-,\rho_+]\rightarrow\R$ is Lipschitz continuous and vanishing at the endpoints, there exists a function $H\in L^\infty(\rho_-,\rho_+)\cap W^{1,\infty}_{loc}(\rho_-,\rho_+)$, such that 
\begin{align*}
    F(r)=-a(r)H(r).
\end{align*}
Moreover, $F\leq 0$ by convexity and thus $H\geq 0$.

In terms of $H$ the functional $\cI$ can be rewritten as
\begin{align*}
   \cI(-aH)&= \int_{\rho_-}^{\rho_+}\frac{1}{2}(a'H+aH')^2-(aH^2)'(a'H+aH')\:dr\\
   &=\int_{\rho_-}^{\rho_+}\frac{1}{2}a^2(1-4H)(H')^2+\frac{1}{2}(a')^2H^2+aa'HH'-(a')^2H^3-3aa'H^2H'\:dr\\
   &=\int_{\rho_-}^{\rho_+}\frac{1}{2}a^2(1-4H)(H')^2-\frac{1}{2}aa''H^2(1-2H)\:dr\\
   &=:\cJ(H).
\end{align*}
We remark that although the derivative of $H$ might diverge towards the endpoints, the above integration by parts is justified due to the fact that $a'$, $H$ and hence also $aH'=-F'-a'H$ are essentially bounded on the full interval $(\rho_-,\rho_+)$.

For later use we will also introduce corresponding restricted functionals
\begin{align*}
    \cJ_{(r_1,r_2)}(H)&:=\int_{r_1}^{r_2}\frac{1}{2}a^2(1-4H)(H')^2-\frac{1}{2}aa''H^2(1-2H)\:dr,\\
    \cI_{(r_1,r_2)}(F)&:=\int_{r_1}^{r_2}(a(r)^{-1}F(r)^2)'F'(r)+\frac{1}{2}F'(r)^2\:dr,
\end{align*}
where $\rho_-\leq r_1<r_2\leq \rho_+$. Repeating the translation from $\cI$ to $\cJ$ but taking this time boundary terms into account, one finds
\begin{align}\label{eq:relation_between_restricted_functionals}
    \cI_{(r_1,r_2)}(-aH)=\cJ_{(r_1,r_2)}(H)+\left[aa'(H^2/2-H^3)\right]_{r=r_1}^{r_2}.
\end{align}

Defining the set 
\begin{align*}
    \cH&:=\set{H\in L^\infty(\rho_-,\rho_+)\cap W^{1,\infty}_{loc}(\rho_-,\rho_+):0\leq H,~-aH\text{ convex}},
\end{align*}
our discussion so far has shown the following statement.
\begin{lemma}\label{lem:suprema} There holds
\begin{align*}
    \sup_{F\in\cF}\cI(F)=\sup_{H\in\cH}\cJ(H).
\end{align*}
\end{lemma}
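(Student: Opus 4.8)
The plan is to observe that the substitution $F=-aH$ sets up a bijection between $\cF$ and $\cH$ under which $\cI(F)=\cJ(H)$; the equality of the two suprema then follows immediately. One direction of this bijection, together with the identity $\cI(F)=\cJ(H)$, has in fact already been established in the discussion preceding the lemma — namely, that every $F\in\cF$ can be written as $F=-aH$ with $H\in L^\infty(\rho_-,\rho_+)\cap W^{1,\infty}_{loc}(\rho_-,\rho_+)$, $H\ge 0$, and $-aH=F$ convex, i.e. $H\in\cH$, and that under this substitution $\cI(-aH)=\cJ(H)$. Hence the only genuinely new ingredient needed is the reverse direction: that $F:=-aH$ lies in $\cF$ whenever $H\in\cH$.

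First I would carry out this reverse direction, which I expect to be the main (if modest) obstacle. Let $H\in\cH$ and set $F:=-aH$. On $(\rho_-,\rho_+)$, $F$ is the product of the smooth function $a$ with $H\in W^{1,\infty}_{loc}$, so $F$ is continuous there; since $a(\rho_\pm)=0$ and $H\in L^\infty$, we get $F(r)\to 0$ as $r\to\rho_\pm$, so $F$ extends continuously to $[\rho_-,\rho_+]$ with $F(\rho_\pm)=0$, and by the defining property of $\cH$ the function $F=-aH$ is convex. It remains to note that $F$ is in fact Lipschitz (which is implicit in the class $\cF$): from $H\in L^\infty$ one has $|F(r)|=a(r)H(r)\le\norm{H}_{L^\infty}a(r)\le C\,\dist(r,\{\rho_-,\rho_+\})$ near the endpoints, because $a$ has simple zeros there; combined with monotonicity of $F'$ (from convexity), this bounds the one-sided derivatives of $F$ at $\rho_\pm$, hence $F'$ on all of $(\rho_-,\rho_+)$. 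Thus $F\in\cF$, and the maps $F\mapsto -F/a$ and $H\mapsto -aH$ are visibly mutually inverse.

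For completeness I would also recall the identity $\cI(-aH)=\cJ(H)$ as carried out above: inserting $(-aH)'=-(a'H+aH')$ and $(a^{-1}F^2)'=(aH^2)'$ into the definition of $\cI$, expanding, and integrating by parts the cross terms $aa'HH'$ and $aa'H^2H'$; this is legitimate since $a'$, $H$, and hence $aH'=-F'-a'H$, are essentially bounded on $(\rho_-,\rho_+)$, and the boundary contribution $\left[aa'\left(H^2/2-H^3\right)\right]_{r=\rho_-}^{\rho_+}$ vanishes because $a(\rho_\pm)=0$.

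Putting the pieces together: for every $F\in\cF$ we have $-F/a\in\cH$ with $\cJ(-F/a)=\cI(F)$, so $\sup_{F\in\cF}\cI(F)\le\sup_{H\in\cH}\cJ(H)$; and for every $H\in\cH$ we have $-aH\in\cF$ with $\cI(-aH)=\cJ(H)$, so $\sup_{H\in\cH}\cJ(H)\le\sup_{F\in\cF}\cI(F)$. Hence $\sup_{F\in\cF}\cI(F)=\sup_{H\in\cH}\cJ(H)$, as claimed.
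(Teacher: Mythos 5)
Your proposal is correct and follows essentially the same approach the paper intends: the paper presents the lemma as an immediate consequence of the preceding computation establishing $\cI(-aH)=\cJ(H)$ together with the correspondence $F\leftrightarrow H=-F/a$, and you correctly observe that the equality of suprema requires both directions of this correspondence. The paper's discussion only spells out the direction $\cF\to\cH$; your explicit verification of the reverse direction (that $H\in\cH$ yields $F=-aH\in\cF$, including the Lipschitz bound near the endpoints via the simple zeros of $a$ and the monotonicity of $F'$) fills in a detail the paper leaves implicit, but adds no new idea beyond what the text already suggests.
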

Thus Proposition \ref{prop:variational_problem} follows from the following lemma.
\begin{lemma}
For all $H\in \cH$ there holds $\cJ(H)\leq \cJ(1/3)$ and the inequality is strict for $H\neq 1/3$.
\end{lemma}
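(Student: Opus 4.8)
The plan is to bypass the sign trouble in $\cJ$ by translating everything into the variable $F:=-aH\in\cF$, using the identity $\cJ(H)=\cI(-aH)$ recorded just before Lemma \ref{lem:suprema}. Since $H\in\cH$ corresponds exactly to a convex $F$ vanishing at $\rho_\pm$, and $H\equiv1/3$ to $F_*:=-a/3$, the assertion $\cJ(H)\le\cJ(1/3)$ with equality only at $H\equiv1/3$ is precisely $\cI(F)\le\cI(F_*)$ with equality only at $F=F_*$. The point is that in this variable the convexity of $F$ (i.e. $F''\ge0$ as a measure) can be brought to bear, whereas the direct expression $\cJ(H)-\cJ(1/3)=\int\tfrac12a^2(1-4H)(H')^2+aa''(H-1/3)^2(H+1/6)\,dr$ resists a pointwise estimate because the factor $1-4H$ changes sign.

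First I would rewrite $\cI$ so the convexity is visible. Integrating by parts the first term of $\cI(F)=\int_{\rho_-}^{\rho_+}(a^{-1}F^2)'F'+\tfrac12(F')^2\,dr$ and observing that the boundary contribution $[a^{-1}F^2F']_{\rho_-}^{\rho_+}$ vanishes (because $a^{-1}F^2\to0$ at the endpoints while $F'$ stays bounded, $F$ being Lipschitz), one obtains
\[
\cI(F)=-\int_{\rho_-}^{\rho_+}\frac{F^2}{a}\,dF''+\frac12\int_{\rho_-}^{\rho_+}(F')^2\,dr,
\]
with $F''$ the finite, non-negative second-derivative measure of the convex function $F$. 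Next I would substitute $F=F_*+G$, $G(\rho_\pm)=0$, and expand; since $\cI$ is a cubic polynomial functional of $F$ this is a finite expansion. Using $F_*=-a/3$, $F_*''=-a''/3$, repeated integration by parts (with $G(\rho_\pm)=0$ and $a(\rho_\pm)=0$ killing every boundary term), and the elementary identity $\int(a')^2\,dr=-\int aa''\,dr$, the constant terms collapse to $\cI(F_*)=\tfrac1{54}\int(a')^2\,dr$, the terms linear in $G$ cancel (expressing that $F_*$ solves the Euler--Lagrange equation), and one is left with
\[
\cI(F)-\cI(F_*)=-\frac16\int_{\rho_-}^{\rho_+}(G')^2\,dr+\frac23\int_{\rho_-}^{\rho_+}\frac{a''}{a}\,G^2\,dr-\int_{\rho_-}^{\rho_+}\frac{G^2}{a}\,dF''.
\]

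From this identity the conclusion is immediate: on $[\rho_-,\rho_+]$ one has $a>0$ and, by a direct computation, $a''(r)=-2\rho_-\rho_+(\rho_++\rho_--r)^{-3}<0$, so the first term is $\le0$, the second is $\le0$, and the third is $\le0$ because $F''\ge0$. Hence $\cJ(H)=\cI(F)\le\cI(F_*)=\cJ(1/3)$. Equality forces $G'\equiv0$, hence $G\equiv0$ by the Dirichlet condition, i.e. $F=F_*$ and $H\equiv1/3$; this proves the strict inequality for $H\ne1/3$, and together with Lemma \ref{lem:suprema} it yields Proposition \ref{prop:variational_problem}.

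The \emph{main obstacle} is arriving at the displayed identity rather than the final sign check: one must carry out the cubic expansion around $F_*$ and the several integrations by parts carefully, keeping track of the measure-valued $F''$ (and the fact that $\int_{\rho_-}^{\rho_+}G^2a^{-1}\,dF''$ makes sense because $G^2/a$ is continuous and bounded up to the endpoints). Every boundary term that appears vanishes thanks to $a(\rho_\pm)=0$, $G(\rho_\pm)=0$, and the one-sided boundedness of $F'$ and $G'$ coming from Lipschitz continuity, so the formal manipulations are all justified.
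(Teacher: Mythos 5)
Your strategy is genuinely different from the paper's and, once corrected, simpler. The paper works directly with $\cJ(H)$: it splits $(\rho_-,\rho_+)$ into $\{H>1/4\}$ and $\{H<1/4\}$, treats the first region by the elementary bound $\sup_{x\geq 0}x^2(1-2x)=1/27$, and on the second region converts back to $\cI$, compares to the auxiliary profile $H\equiv 1/4$ via the monotonicity of $B_\alpha(x)=x^2/\alpha+x/2$ on $[-\alpha/4,\infty)$, and uses the concavity of $a$ to close the estimate. Your approach instead exploits that $\cI$ is a cubic polynomial in $F$, expands exactly around the critical point $F_*=-a/3$, checks that the linear terms vanish, and reads off a sign from the remaining quadratic-plus-cubic remainder. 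That is a cleaner ``second-variation + exact remainder'' argument and avoids the case distinction entirely.

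However, the displayed identity is wrong. With $G=F-F_*$ one has $F_*''=-a''/3$, hence $G''=F''-F_*''=F''+a''/3$, not $F''-a''/3$. Carrying out the expansion carefully, the quadratic-in-$G$ part of $\cI(F)-\cI(F_*)$ is
\[
\frac{2}{3}\int_{\rho_-}^{\rho_+} GG''\,dr+\frac{1}{3}\int_{\rho_-}^{\rho_+}\frac{a''}{a}G^2\,dr+\frac{1}{2}\int_{\rho_-}^{\rho_+}(G')^2\,dr
=-\frac{1}{6}\int_{\rho_-}^{\rho_+}(G')^2\,dr+\frac{1}{3}\int_{\rho_-}^{\rho_+}\frac{a''}{a}G^2\,dr,
\]
and the cubic part is $-\int_{\rho_-}^{\rho_+}\frac{G^2}{a}G''\,dr$. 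Substituting $G''=F''+a''/3$, the $\frac{a''}{a}G^2$ terms cancel and one obtains the \emph{correct} identity
\[
\cI(F)-\cI(F_*)=-\frac{1}{6}\int_{\rho_-}^{\rho_+}(G')^2\,dr-\int_{\rho_-}^{\rho_+}\frac{G^2}{a}\,dF'',
\]
whereas your version carries a spurious extra term $\frac{2}{3}\int\frac{a''}{a}G^2\,dr$ (coming from the sign slip $G''=F''-a''/3$). You can check the coefficients against $F=F_\lambda=-\lambda a$, $G=(\tfrac{1}{3}-\lambda)a$: the correct identity reproduces $\cI(F_\lambda)-\cI(F_{1/3})=-(\tfrac{1}{6}+\lambda)(\tfrac{1}{3}-\lambda)^2\int (a')^2\,dr$, consistent with $\cI(F_\lambda)=\lambda^2(\tfrac{1}{2}-\lambda)\int(a')^2\,dr$, while yours gives the prefactor $-(\tfrac{5}{6}+\lambda)$, which is off. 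Fortunately the error is benign for your conclusion: the spurious term is also nonpositive (since $a>0$, $a''<0$), so your sign argument and the strictness via $G'\equiv 0\Rightarrow G\equiv 0$ still go through; but the identity as written should be replaced by the one above, which is in fact even cleaner since it makes the role of convexity ($F''\geq 0$) the only nontrivial sign input.
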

\begin{proof}
Let $H\in\cH$ and set $F=-aH$. For the comparison to $H=1/3$ we split
\begin{align}\begin{split}\label{eq:splitting_of_J}
    \cJ(H)-\cJ(&1/3)=\int_{\set{r:H(r)>1/4}}\frac{1}{2}a^2(1-4H)(H')^2-\frac{1}{2}aa''H^2(1-2H)+\frac{1}{54}aa''\:dr\\
    &+\int_{\set{r:H(r)<1/4}}\frac{1}{2}a^2(1-4H)(H')^2-\frac{1}{2}aa''H^2(1-2H)+\frac{1}{54}aa''\:dr.
    \end{split}
\end{align}
The first term can be estimated from above by $0$ as can be seen by $1-4H\leq 0$, $-aa''\geq 0$ and 
\begin{align*}
    \sup\set{x^2(1-2x):x\geq 0}=\left(\frac{1}{3}\right)^2\left(1-\frac{2}{3}\right)=\frac{1}{27}.
\end{align*}

For the second part, we write the domain of integration $\set{r:H(r)<1/4}$ as a disjoint union of maximal intervals such that at each boundary point $r_0$ of these intervals there holds $H(r_0)=\frac{1}{4}$ or $r_0\in\set{\rho_-,\rho_+}$. In any of the cases we can conclude
\begin{align}\label{eq:property_boundary_points}
    a(r_0)H(r_0)^k=a(r_0)\left(\frac{1}{4}\right)^k,~k\in \N.
\end{align}

Let now $(r_1,r_2)\subset\set{r:H(r)<\frac{1}{4}}$ be such a maximal interval. By \eqref{eq:relation_between_restricted_functionals} and \eqref{eq:property_boundary_points} we can rewrite
\begin{align}\label{eq:estimate_II}
    \cJ_{(r_1,r_2)}(H)-\cJ_{(r_1,r_2)}(1/3)&=\cI_{(r_1,r_2)}(F)-\cI_{(r_1,r_2)}(-a/3)+\left(\frac{1}{54}-\frac{1}{64}\right)\left[aa'\right]_{r=r_1}^{r_2}.
\end{align}

Now $F=-aH$ is a convex function with $-\frac{1}{4}a(r)\leq F(r)\leq 0$ for all $r\in (r_1,r_2)$. We claim that 
\begin{align}\label{eq:estimate_I}
    \cI_{(r_1,r_2)}(F)\leq \cI_{(r_1,r_2)}(-a/4)=\frac{1}{64}\int_{r_1}^{r_2}(a')^2\:dr.
\end{align}
Indeed, let us for simplicity assume that $F_{|(r_1,r_2)}$ is $\cC^2$. The general case can be justified by approximation, e.g. $(r_1,r_2)$ by $(r_1+\varepsilon,r_2-\varepsilon)$, $F$ and $a$ by mollified versions $F_\varepsilon$, $a_\varepsilon$, or directly by  interpreting $F''$ as a measure and the corresponding boundary terms in a trace sense.
Using \eqref{eq:property_boundary_points} we compute
\begin{align*}
    \cI_{(r_1,r_2)}(F)&=\int_{r_1}^{r_2}\left(\frac{F^2}{a}+\frac{F}{2}\right)'F'\:dr\\
    &=\int_{r_1}^{r_2}\left(\frac{F^2}{a}+\frac{F}{2}\right)(-F'')\:dr+\left[\left(-\frac{1}{16}a\right)F'\right]_{r=r_1}^{r_2},
\end{align*}
and observe that for each $\alpha>0$ the map $B_\alpha:\R\rightarrow\R$,
\[
B_\alpha(x)=\frac{x^2}{\alpha}+\frac{x}{2}
\]
is monotone increasing precisely on $[-\alpha/4,\infty)$. Since by assumption $F(r)\geq -a(r)/4$ for $r\in (r_1,r_2)$, and $-F''\leq 0$, there holds
\begin{align*}
    \cI_{(r_1,r_2)}(F)&\leq \int_{r_1}^{r_2}\left(-\frac{1}{16}a\right)(-F'')\:dr+\left[-\frac{1}{16}aF'\right]_{r=r_1}^{r_2}=-\int_{r_1}^{r_2}\frac{1}{16}a'F'\:dr\\
    &=\frac{1}{16}\int_{r_1}^{r_2}a''F\:dr+\left[-\frac{1}{16}a'F\right]_{r=r_1}^{r_2}\leq -\frac{1}{64}\int_{r_1}^{r_2}a''a\:dr+\left[\frac{1}{64}a'a\right]_{r=r_1}^{r_2}\\
    &=\frac{1}{64}\int_{r_1}^{r_2}(a')^2\:dr.
\end{align*}
In the second to last inequality we have used that $a$ is concave, and once more \eqref{eq:property_boundary_points}. This shows \eqref{eq:estimate_I}.

Using this estimate in \eqref{eq:estimate_II}, we find
\begin{align*}
    \cJ_{(r_1,r_2)}(H)-\cJ_{(r_1,r_2)}(1/3)&\leq \cI_{(r_1,r_2)}(-a/4)-\cI_{(r_1,r_2)}(-a/3)+\left(\frac{1}{54}-\frac{1}{64}\right)\left[aa'\right]_{r=r_1}^{r_2}\\
    &=\left(\frac{1}{64}-\frac{1}{54}\right)\left(\int_{r_1}^{r_2}(a')^2\:dr-\left[aa'\right]_{r=r_1}^{r_2}\right)\\
    &=\left(\frac{1}{64}-\frac{1}{54}\right)\int_{r_1}^{r_2}-aa''\:dr<0.
\end{align*}

This finishes the proof of $\cJ(H)\leq \cJ(1/3)$. The above estimate also shows that $\cJ(H)<\cJ(1/3)$ provided the set $\set{r:H(r)<1/4}$ is not empty. If it is empty, only the first integral contributes in \eqref{eq:splitting_of_J} and it is easy to see that one obtains a strict inequality except for $H=1/3$. 
\end{proof}

Our last remark addresses the selection criterion from \cite{Gimperlein_etal_1,Gimperlein_etal_2} mentioned in Section \ref{sec:selection}.
\begin{remark}\label{rem:laap}
If we form instead of the total energy $E^*_{kin}(t)+E_{pot}(t)$ the time derivative of the associated action, i.e.
\begin{align*}
    \frac{d}{dt}\cA(t):=E^*_{kin}(t)-E_{pot}(t),
\end{align*}
then it is easy to check that
\begin{align*}
    \frac{d}{dt}&\cA(t)+E_{pot}(0)=\\
    &\frac{g^3t^4}{4}\int_{\rho_-}^{\rho_+}\frac{1}{2}a^2(1+4H)(H')^2-\frac{1}{2}aa''H^2(1+2H)\:dr\geq 0.
\end{align*}
Thus $H=0$, i.e. $F=0$, is the strict minimizer of the above quantity. This means that the stationary interface $\rho(x,t)=\rho_0(x)$ is preferred by the strict least action admissibility criterion of \cite{Gimperlein_etal_2} when postulated in the set of subsolutions considered.
\end{remark}

\appendix

\section{Definitions and accelerated coordinates}\label{sec:defs}

For completeness we collect here rigorous definitions of (sub)solutions to the Euler equations with energy inequality, and a few words on the change of coordinates. Recall that $\Omega$ can be a bounded domain in $\R^n$, $n=2,3$, or $\Omega=\T^{n-1}\times(-L,L)$, and $T>0$. Since it is enough for our purposes to have essentially bounded velocity fields $v$, we refrain to formulate the definitions below in the slightly more general setting $v\in L^3(\Omega\times (0,T))$.

We also note that we further require the pressure to have an $L^1$ trace in the spatial variable, due to considerations of how the total energy balance behaves under the change of coordinates \eqref{eq:transf}, see also the proof of Lemma \ref{lem:translation}. As such, we also consider test functions in the distributional formulation of the equations which need not to vanish on the spatial boundary of the domain. We would like to point out that this somewhat unusual condition does not pose an issue for us as the convex integration solutions obtained by Theorem \ref{thm:main_theorem} inherit the pressure of the more regular subsolution. Also, the statement of Theorem \ref{thm:main_theorem} remains true with a more common weak formulation, but Lemma \ref{lem:total_energy} below might not.

\begin{definition}[Weak solutions]\label{def:weaksols}
Let $v_0\in L^\infty(\Omega;\R^n)$ be a weakly divergence-free vector field satisfying \eqref{eq:boundary_condition_v} in the sense of distributions, $\rho_0\in  L^\infty(\Omega)$ with $\rho_0\in\{\rho_-,\rho_+\}$ almost everywhere.
We say that the tuple $(\rho,v,p) \in L^\infty(\Omega\times(0,T);\R\times\R^n\times \R)$, $p\in L^1(\partial\Omega\times(0,T))$ is a weak solution to \eqref{eq:euler_only}, \eqref{eq:euler_local_energy_inequality}, \eqref{eq:boundary_condition_v} with initial data $(\rho_0,v_0)$, if for any test functions $\Phi\in \cC^\infty_c(\overline{\Omega}\times[0,T);\mathbb{R}^n)$,  $\Psi_1,\Psi_2\in \cC^\infty_c(\overline{\Omega}\times[0,T))$, $\Psi_2\geq 0$ we have
\begin{align*}
\int_0^T\int_{\Omega} \left[ (\rho v) \cdot \partial_t \Phi + (\rho v\otimes v+p\id ) :\nabla\Phi-g\rho\Phi_n \right] \ dx \ dt \hspace{25pt}&\\
-\int_0^T\int_{\partial\Omega}p\Phi\cdot\vec{n}\ dx\ dt
+\int_{\Omega}\rho_0(x)v_0 (x)\cdot \Phi(x,0)\ dx&=0,\\
\int_0^T\int_{\Omega} v\cdot\nabla \Psi_1\ dx\ dt&=0,\\
\int_0^T\int_{\Omega} \left[\rho \partial_t \Psi_1 + (\rho v)\cdot\nabla\Psi_1 \right] \ dx \ dt +\int_{\Omega} \rho_0 (x) \Psi_1(x,0)\ dx&=0,\\
\int_0^T\int_{\Omega} \left[\left(\frac{\rho|v|^2}{2} +\rho g x_n\right) \partial_t \Psi_2 + \left(\left(\frac{\rho|v|^2}{2}+\rho g x_n+p\right)v\right)\cdot\nabla\Psi_2\right] \ dx \ dt\hspace{25pt} &\\+\int_{\Omega} \left(\frac{\rho_0(x)|v_0(x)|^2}{2} +\rho_0(x) g x_n\right)\Psi_2(x,0)\ dx&\geq 0.
\end{align*}
The measure defined through the left-hand side of the last inequality is called the energy dissipation measure and denoted by $\nu$.
\end{definition}

Weak solutions to the transformed equations \eqref{eq:main0g}, \eqref{eq:boundary_condition_w}, \eqref{eq:euler_0g_local_energy_inequality}  are understood in the following similar sense. The accelerated domain $\mathscr{D}$ has been defined in \eqref{eq:definition_accelerated_domain}. We also set
\begin{align*}
    \overline{\mathscr{D}}^*:=\bigcup_{t\in[0,T)}\overline{\mathscr{D}(t)}\times\{t\},\quad \partial_{rel}\mathscr{D}:=\bigcup_{t\in(0,T)}\partial{\mathscr{D}(t)}\times\{t\}.
\end{align*}
\begin{definition}\label{def:weaksolstrans}
Let $\rho_0,v_0$ be as in Definition \ref{def:weaksols}.
The tuple $(\mu,w,q) \in L^\infty(\mathscr D;\R\times\R^n\times \R)$, $q\in L^1(\partial_{rel}\mathscr{D})$ is a weak solution to \eqref{eq:main0g}, \eqref{eq:boundary_condition_w}, \eqref{eq:euler_0g_local_energy_inequality} with initial data $(\rho_0,v_0)$, 
if
for any test functions $\Phi\in \cC^\infty_c(\overline{\mathscr{D}}^*;\mathbb{R}^n) $, $\Psi_1,\Psi_2\in\cC^\infty(\overline{\mathscr{D}}^*)$, $\Psi_2\geq 0$ we have
\begin{align*}
\int_{\mathscr D} \left[ (\mu w) \cdot \partial_t \Phi + (\mu w\otimes w+q\id) :\nabla\Phi \right] \ dx \ dt \hspace{89pt}& \\-\int_0^T\int_{\partial\mathscr{D}(t)}q\Phi\cdot\vec{n}\ dy\ dt +\int_{\Omega}\rho_0(x)v_0 (x)\cdot \Phi(x,0)\ dx&=0,\\
\int_{\mathscr D} \left[w-gte_n\right]\cdot\nabla \Psi_1\:dx\:dt&=0,\\
\int_{\mathscr D} \left[\mu \partial_t \Psi_1 + (\mu w)\cdot\nabla\Psi_1 \right] \ dx \ dt +\int_{\Omega} \rho_0 (x) \Psi_1(x,0)\ dx&=0,\\
\int_{\mathscr D} \left[\frac{\mu|w|^2}{2} \partial_t \Psi_2 + \left(\left(\frac{\mu|w|^2}{2}+q\right)w\right)\cdot\nabla\Psi_2\right] \ dx \ dt \hspace{65pt}&\\-\int_0^T gt\int_{\partial\mathscr{D}(t)}q {\Psi}_2e_n\cdot \vec{n}\ dx\ dt+\int_{\Omega} \frac{\rho_0(x)|v_0(x)|^2}{2} \Psi_2(x,0)\ dx&\geq 0.
\end{align*}
Here the local dissipation measure is once more defined in the same way as in Definition \ref{def:weaksols}.
\end{definition}

\begin{definition}\label{def:weaksolslin}
Let $\rho_0,v_0$ be as in Definition \ref{def:weaksols}, set $e_0:=\frac{1}{2}\rho_0\abs{v_0}^2$ and let $\nu\in\left(\cC_c^\infty(\overline{\mathscr{D}}^*)\right)^*$ be a negative measure. 
Similarly to above we say that the tuple $(\mu,w,m,h,\sigma,e) \in L^\infty(\mathscr D;\R\times\R^n\times\R^n\times\R^n\times\cS_0^{n\times n}\times\R)$ together with a pressure $q\in L^\infty(\mathscr D)\cap L^1(\partial_{rel}\mathscr{D})$ is a weak solution to \eqref{eq:boundary_condition_w}, \eqref{eq:linear_system_with_p} with initial data $(\rho_0,v_0,e_0)$, if for any test functions $\Phi\in \cC^\infty_c(\overline{\mathscr{D}}^*;\mathbb{R}^n)$,  $\Psi\in\cC^\infty(\overline{\mathscr{D}}^*)$ we have
\begin{align*}
\int_{\mathscr D} \left[ m \cdot \partial_t \Phi + \left(\sigma +\frac{2}{n}e\id+q\id\right):\nabla\Phi \right] \ dx \ dt -\int_0^T\int_{\partial\mathscr{D}(t)}q\Phi\cdot\vec{n}\ dx\ dt & \\ +\int_{\Omega}\rho_0(x)v_0 (x)\cdot \Phi(x,0)\ dx&=0,\\
\int_{\mathscr D} \left[w-gte_n\right]\cdot\nabla \Psi\:dx\:dt&=0,\\
\int_{\mathscr D} \left[\mu \partial_t \Psi + m\cdot\nabla\Psi \right] \ dx \ dt +\int_{\Omega} \rho_0 (x) \Psi(x,0)\ dx&=0,\\
\int_{\mathscr D} \left[e \partial_t \Psi + h\cdot\nabla\Psi\right] \ dx \ dt -\int_0^T gt \int_{\partial\mathscr{D}(t)}q {\Psi}e_n\cdot \vec{n}\ dx\ dt&\\+\int_{\Omega} e_0(x) \Psi(x,0)\ dx&=-\nu[\Psi].
\end{align*}
\end{definition}

Concerning the change of coordinates introduced in Section \ref{sec:change_of_coordinates} we would like to sketch the following equivalence.
\begin{lemma}\label{lem:translation} Let $(\rho_0,v_0)$ be as in Definition \ref{def:weaksols}. The tuple
$(\rho,v,p)$ is a solution with initial data $(\rho_0,v_0)$ in the sense of Definition \ref{def:weaksols} if and only if $(\mu,v,q)$ defined through \eqref{eq:transf} is a solution with initial data $(\rho_0,v_0)$ in the sense of Definition \ref{def:weaksolstrans}. Moreover, there holds \eqref{eq:translation_of_dissipation_measures} in duality with respect to $\cC^\infty_c(\overline{\Omega}\times[0,T))$.
\end{lemma}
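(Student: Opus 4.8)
The plan is to verify the equivalence by directly transplanting each of the four distributional identities of Definition \ref{def:weaksols} into the corresponding identity of Definition \ref{def:weaksolstrans} under the change of variables $y=x+\frac{1}{2}gt^2e_n$, $(\mu,w,q)$ as in \eqref{eq:transf}. The starting point is the bijection between test functions: given $\Phi\in\cC^\infty_c(\overline{\mathscr D}^*;\R^n)$ (or $\Psi_i\in\cC^\infty(\overline{\mathscr D}^*)$), set $\tilde\Phi(x,t):=\Phi(x+\frac12 gt^2e_n,t)$, which lies in $\cC^\infty_c(\overline\Omega\times[0,T);\R^n)$, and conversely; this map is a linear bijection preserving the support conditions and the initial time slice. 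The Jacobian of the spatial change of variables is $1$ for each fixed $t$, so $\int_{\mathscr D}G(y,t)\,dy\,dt=\int_0^T\int_\Omega G(x+\frac12 gt^2e_n,t)\,dx\,dt$, and on the boundary $\partial\mathscr D(t)=\partial\Omega+\frac12 gt^2e_n$ with the same outward normal $\vec n$.

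The key computational step is the chain rule for the time derivative: if $\Phi(y,t)$ and $\tilde\Phi(x,t)=\Phi(x+\frac12gt^2e_n,t)$, then $\partial_t\tilde\Phi(x,t)=(\partial_t\Phi)(y,t)+gt\,(\partial_{y_n}\Phi)(y,t)$, while $\nabla_x\tilde\Phi=(\nabla_y\Phi)(y,t)$. First I would treat the transport equation for $\mu$: substituting $w=v+gte_n$ into $\int_{\mathscr D}[\mu\partial_t\Psi+m\cdot\nabla\Psi]$, the extra term $gt\,e_n\cdot\nabla_y\Psi$ produced by $m=\mu w$ exactly cancels the extra term $gt\,\partial_{y_n}\Psi$ coming from $\partial_t\tilde\Psi_1$, using $\mu(y,t)=\rho(x,t)$; this recovers the $\rho$-equation. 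The divergence-free condition is immediate since $\divv_y w=\divv_y(v)+\divv_y(gte_n)=\divv_x v$ in the distributional pairing, and the weak form $\int_{\mathscr D}(w-gte_n)\cdot\nabla\Psi=\int(v)\cdot\nabla\Psi$ matches Definition \ref{def:weaksols} directly. For the momentum equation one inserts $\rho v=\mu(w-gte_n)$, $\rho v\otimes v=\mu(w-gte_n)\otimes(w-gte_n)$, expands, and uses the transport identity $\partial_t(\mu w)+\divv(\mu w\otimes w)$ versus $\partial_t(\rho v)+\divv(\rho v\otimes v)$: the difference is exactly $\partial_t(\mu gte_n)+\divv(\mu gte_n\otimes w)+\divv(\mu w\otimes gte_n)-\divv(\mu g^2t^2 e_n\otimes e_n) = g\mu e_n + gt\,e_n\,\partial_t\mu + \ldots$, and the surviving term is $g\mu e_n$ (using $\partial_t\mu+\divv(\mu w)=0$), which is precisely the gravity term $-g\rho\Phi_n$ (with sign flipped by integration by parts) that is present in Definition \ref{def:weaksols} but absent in Definition \ref{def:weaksolstrans}; the boundary terms $\int q\Phi\cdot\vec n$ are literally the same. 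The energy inequality is handled analogously: expanding $\frac12\rho|v|^2+\rho gx_n=\frac12\mu|w|^2-\mu gt w_n+\frac12\mu g^2t^2+\mu g(y_n-\frac12 gt^2)$, one checks term by term that $\partial_t(\text{LHS})+\divv((\ldots)v)$ in $(x,t)$ equals $\partial_t(\frac12\mu|w|^2)+\divv((\frac12\mu|w|^2+q)w)$ in $(y,t)$ once the transport and momentum equations (already established) are used to kill the $\mu gx_n$ and mixed terms; this simultaneously proves the pointwise distributional identity \eqref{eq:translation_of_dissipation_measures} and shows that the two dissipation measures coincide under the coordinate map, so in particular one is $\leq 0$ iff the other is, giving the equivalence of the two inequalities and matching the boundary term $\int gt\,q\Psi_2 e_n\cdot\vec n$.

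The main obstacle is bookkeeping rather than conceptual: one must be careful that the gravitational term $\rho g x_n$ contributes both to the transport-type cancellations and to a genuine source, and that the pressure boundary integrals transform correctly — this is exactly why the $L^1$ trace hypothesis on $p$ (resp.\ $q$) and the use of test functions not vanishing on $\partial\Omega$ were built into the definitions. I would carry out the momentum and energy identities at the level of distributions by testing against $\Phi$ resp.\ $\Psi_2$, integrating by parts only in directions where the relevant field has enough regularity (here everything is merely $L^\infty$, so all derivatives stay on the test function), and invoking the already-proven transport identity to substitute $\partial_t\mu=-\divv(\mu w)$ inside the pairing. Since \eqref{eq:translation_of_dissipation_measures} is claimed only in duality with $\cC^\infty_c(\overline\Omega\times[0,T))$, it suffices to verify that for every such test function the two sides, after the change of variables, produce the same real number, which follows from the four identities above; no regularity beyond $L^\infty$ (plus the $L^1$ pressure trace) is needed.
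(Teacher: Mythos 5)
Your plan is correct and follows essentially the same route as the paper's own proof: expand the distributional pairings via the change of variables and the chain rule $\partial_t\tilde\Phi(y,t)=\partial_t\Phi(x,t)-gt\partial_n\Phi(x,t)$, isolate the gravity source from the $gt e_n$ shift in the momentum identity, and absorb the leftover terms by re-testing the already-established transport and momentum equations against auxiliary test functions (the paper uses $\Psi_1:=gt\Phi_n$ for momentum and $\Phi:=gt\Psi_2 e_n$, $\Psi_1:=\frac{g^2t^2}{2}\Psi_2-gx_n\Psi_2$ for the energy balance). The only cosmetic difference is that you transport test functions from $\overline{\mathscr D}^*$ to $\overline\Omega\times[0,T)$ whereas the paper goes the other way; since the map is a bijection this is immaterial.
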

\begin{proof}

Let $\Phi\in \cC^\infty_c(\overline{\Omega}\times[0,T);\mathbb{R}^n)$,  $\Psi_1,\Psi_2\in \cC^\infty_c(\overline{\Omega}\times[0,T))$,  $\Psi_2\geq 0$. We will show that the equivalence of weak solutions and dissipation measures follows straightforwardly by applying the transformations $y=x+\frac{1}{2}gt^2e_n$ and \eqref{eq:transf} directly in the integrals in the respective definitions. Indeed, we can begin by noting that the left-hand sides of the second equations from Definitions \ref{def:weaksols} and \ref{def:weaksolstrans} coincide trivially.

Furthermore, if one defines
$$\tilde\Phi(y,t):=\Phi(y-gt^2/2e_n,t)=\Phi(x,t),$$
one has
\begin{align*}
    \partial_t\tilde\Phi(y,t)=\frac{d}{dt}(\Phi(y-gt^2/2e_n,t))=\partial_t\Phi(x,t)-gt\partial_n \Phi(x,t),
\end{align*}
similarly for $\Psi_1$ and $\Psi_2$.

Then, the second easiest terms to check are the left-hand sides of the third equations from both definitions, since from \eqref{eq:transf} one has for almost every $(x,t)\in\Omega\times(0,T)$ that
\begin{multline*}
    \mu (y,t)\partial_t \tilde\Psi_1(y,t) + (\mu(y,t) w(y,t))\cdot\nabla\tilde\Psi_1(y,t)=\\
    \rho (x,t)\partial_t \Psi_1(x,t)-gt\rho (x,t)\partial_n\Psi_1(x,t) +gt\rho (x,t)\partial_n\Psi_1(x,t) + (\rho(x,t) v(x,t))\cdot\nabla\Psi_1(x,t)\\=
    \rho (x,t)\partial_t \Psi_1(x,t) + (\rho(x,t) v(x,t))\cdot\nabla\Psi_1(x,t).
\end{multline*}

Next, for the equivalence of the left-hands sides of the first lines of the two definitions, we write
\begin{align*}
    (\mu&(y,t) w(y,t)) \cdot \partial_t \tilde\Phi(y,t) + (\mu(y,t) w(y,t)\otimes w(y,t)+q(y,t)\id) :\nabla\tilde\Phi(y,t)\\
    &=(\rho(x,t) (v(x,t)+gt e_n)) \cdot \partial_t \Phi(x,t)-gt(\rho(x,t) (v(x,t)+gt e_n)) \partial_n\Phi(x,t)\\
    &\phantom{==}+ (\rho(x,t)(v(x,t)+gt e_n)\otimes(v(x,t)+gt e_n)+p(x,t)\id ) :\nabla\Phi(x,t)\\
    &=(\rho(x,t) v(x,t)) \cdot \partial_t \Phi(x,t) + (\rho(x,t) v(x,t)\otimes v(x,t)+p(x,t)\id ) :\nabla\Phi(x,t)\\
    &\phantom{==}-g\rho(x,t)\Phi_n(x,t) + \left(\rho(x,t)\partial_t(gt\Phi_n(x,t))+\rho(x,t)v(x,t)\cdot\nabla(gt\Phi_n(x,t)) \right).
\end{align*}
After integrating over the whole domain, the integral of the terms in the last parenthesis is zero, by applying the third equation in Definition \ref{def:weaksols} with $\Psi_1:=gt\Phi_n$, hence the equivalence follows.

Finally, to see that the dissipation measures coincide, through a similar calculation as above, we note that
\begin{align*}
    &\frac{\mu|w|^2}{2} \partial_t \tilde\Psi_2 + \left(\left(\frac{\mu|w|^2}{2}+q\right)w\right)\cdot\nabla\tilde\Psi_2= \rho\left(\frac{|v|^2}{2} +gt v_n+\frac{g^2t^2}{2} \right)\partial_t \Psi_2 
    \\&\hspace{115pt}+ \left(\rho\left(\frac{|v|^2}{2} +gt v_n+\frac{g^2t^2}{2}\right)+p \right) v \cdot \nabla\Psi_2 + gt p \partial_n\Psi_2
    \\&\phantom{=}=\left(\frac{\rho|v|^2}{2} +\rho g x_n\right) \partial_t \Psi_2 + \left(\left(\frac{\rho|v|^2}{2}+\rho g x_n+p\right)v\right)\cdot\nabla\Psi_2
    \\&\hspace{40pt}+\rho\partial_t\left(\frac{g^2 t^2}{2}\Psi_2\right)+\rho v\cdot\nabla\left(\frac{g^2 t^2}{2}\Psi_2\right)
    -\rho\partial_t\left(gx_n\Psi_2\right)+\rho v\cdot\nabla\left(gx_n\Psi_2\right)
    \\&\hspace{40pt}+(\rho v) \cdot \partial_t (gt\Psi_2 e_n) + (\rho v\otimes v+p\id ) :\nabla(gt\Psi_2 e_n)-g\rho gt\Psi_2. 
\end{align*}
After integration, it is easy to check that the terms in the last two lines
yield precisely
$$-\int_{\Omega} \rho_0(x) g x_n\Psi_2(x,0)\ dx,$$
by applying
the first and third equations of Definition \ref{def:weaksols} with $\Phi:=gt\Psi_2 e_n$ and $\Psi_1:=\frac{g^2t^2}{2}\Psi_2-gx_n\Psi_2$.
Thus we have shown that
\begin{multline*}
    \int_0^T\int_{\Omega} \left[\left(\frac{\rho|v|^2}{2} +\rho g x_n\right) \partial_t \Psi_2 + \left(\left(\frac{\rho|v|^2}{2}+\rho g x_n+p\right)v\right)\cdot\nabla\Psi_2\right] \ dx \ dt \\+\int_{\Omega} \left(\frac{\rho_0(x)|v_0(x)|^2}{2} +\rho_0(x) g x_n\right)\Psi_2(x,0)\ dx =\int_{\Omega} \frac{\rho_0(x)|v_0(x)|^2}{2}\tilde \Psi_2(x,0)\ dx\\-\int_0^T gt\int_{\partial\mathscr{D}(t)}q \tilde{\Psi}_2e_n\cdot \vec{n}\ dx\ dt+\int_{\mathscr D} \left[\frac{\mu|w|^2}{2} \partial_t \tilde\Psi_2 + \left(\left(\frac{\mu|w|^2}{2}+q\right)w\right)\cdot\nabla\tilde\Psi_2\right] \ dx \ dt. 
\end{multline*}
By formal partial integration and \eqref{eq:boundary_condition_w} one sees that the latter is indeed the distributional formulation provided by Definitions \ref{def:weaksols} and \ref{def:weaksolstrans} of the equality stated in \eqref{eq:translation_of_dissipation_measures}.
\end{proof}

Finally, we state an exact relation between the total energy of the convex integration solution in original variables and a quantity depending linearly on their subsolution in accelerated variables. 
\begin{lemma}\label{lem:total_energy}
Let $z$ be a subsolution with pressure $q$, dissipation measure $\nu$ and initial data $(\rho_0,v_0,e_0=1/2\rho_0\abs{v_0}^2)$. Let $(\rho_{sol},v_{sol})$ denote the solutions induced by Theorem \ref{thm:main_theorem} and transformation \eqref{eq:transf}, and assume that $\nu$ is not only a measure, but an $L^1$ function. Then the total energy of the solutions satisfies
\begin{align*}
    \int_\Omega \frac{1}{2}\rho_{sol}\abs{v_{sol}}^2+\rho_{sol}gx_n\:dx=\int_{\Omega+\frac{1}{2}gt^2}e-m_ngt+\mu gy_n\:dy.
\end{align*}
\end{lemma}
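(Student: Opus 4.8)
The plan is to relate the two expressions by using the balance laws the subsolution satisfies, integrated over the slices of the accelerated domain, and the defining pointwise relations $m=\mu w$, $e=\tfrac12\mu|w|^2$ connecting the subsolution to the induced solution. First I would recall that the convex integration solutions $(\mu_{sol},w_{sol})$ obtained from Theorem \ref{thm:main_theorem} agree with the subsolution $(\mu,w)$ outside $\mathscr{U}$, and more importantly that the induced tuple $z_{sol}=(\mu_{sol},w_{sol},m_{sol},h_{sol},\sigma_{sol},e_{sol})$ built via \eqref{eq:pointwise_constraints_when_introducing_diff_inclusion} solves the \emph{same} linear system \eqref{eq:linear_system_with_p} with the \emph{same} pressure $q$ and dissipation $\nu$ as $z$, and has the same initial data. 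Hence, for the purpose of computing the total energy via \eqref{eq:total_energy_of_sols} — which is an identity purely in terms of the linear variables $e,m_n,\mu$ — it suffices to prove the stated formula for any $L^\infty$ tuple satisfying the linear system \eqref{eq:linear_system_with_p}, the boundary conditions \eqref{eq:new_boundary_conditions} / \eqref{eq:boundary_condition_w}, and the pointwise relations $m_{sol}=\mu_{sol}w_{sol}$, $e_{sol}=\tfrac12\mu_{sol}|w_{sol}|^2$, $\mu_{sol}\in\{\rho_-,\rho_+\}$. In particular the left-hand side of the claimed identity, after undoing \eqref{eq:transf} (so $\rho_{sol}(x,t)=\mu_{sol}(y,t)$, $v_{sol}=w_{sol}-gte_n$, $dx=dy$), becomes
\[
\int_{\Omega+\frac12 gt^2}\tfrac12\mu_{sol}|w_{sol}-gte_n|^2+\mu_{sol}g\bigl(y_n-\tfrac12 gt^2\bigr)\,dy
=\int_{\Omega+\frac12 gt^2}e_{sol}-m_{sol,n}gt+\tfrac12\mu_{sol}g^2t^2+\mu_{sol}gy_n-\tfrac12\mu_{sol}g^2t^2\,dy,
\]
using $e_{sol}=\tfrac12\mu_{sol}|w_{sol}|^2$ and $m_{sol,n}=\mu_{sol}w_{sol,n}$, which is exactly the right-hand side. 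So the reduction to \eqref{eq:total_energy_of_sols} is essentially algebraic, and the real content is establishing \eqref{eq:total_energy_of_sols} itself, i.e. that the function
\[
t\mapsto G(t):=\int_{\Omega+\frac12 gt^2}e(y,t)-m_n(y,t)gt+\mu(y,t)gy_n\,dy
\]
has the correct value for $t\in[0,T)$, starting from $G(0)=\int_\Omega e_0+\mu_0 g y_n\,dy=E(0)$.

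The next step is to differentiate $G$ in time, carefully tracking the moving domain $\mathscr{D}(t)=\Omega+\tfrac12 gt^2e_n$. By Reynolds transport and the divergence theorem, $\frac{d}{dt}\int_{\mathscr D(t)}\Theta\,dy=\int_{\mathscr D(t)}\partial_t\Theta\,dy+\int_{\partial\mathscr D(t)}\Theta\,gt\,e_n\cdot\vec n\,dS$ for the integrand $\Theta=e-m_ngt+\mu gy_n$. The interior term contributes $\int(\partial_t e - (\partial_t m_n)gt - m_n g + g y_n\partial_t\mu)\,dy$; now substitute the linear equations: $\partial_t e=\nu-\divv h$, $\partial_t m_n = -\partial_{y_n}(\tfrac2n e+q)-(\divv\sigma)_n$ from the $n$-th momentum equation, and $\partial_t\mu=-\divv m$. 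Integrating the divergence terms by parts over $\mathscr D(t)$ produces boundary integrals on $\partial\mathscr D(t)$ which, together with the boundary integrals from the transport formula, should cancel against each other after invoking the boundary conditions \eqref{eq:new_boundary_conditions}, namely $(m-\mu gte_n)\cdot\vec n=0$, $(h-(e+q)gte_n)\cdot\vec n=0$, and $(\sigma+\tfrac2n e\id - m\otimes gte_n)\vec n=0$ — these are precisely the combinations that make the transported fluxes tangential. What remains after all cancellations is $\frac{d}{dt}G(t)=\int_{\mathscr D(t)}\nu\,dy = \nu(t)$ in the $L^1$ case; integrating, $G(t)=E(0)+\int_0^t\nu(s)\,ds$, which matches the total energy balance \eqref{eq:total_energy_of_sols}/Lemma \ref{lem:total_energy}. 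Alternatively, and more robustly, rather than differentiating one can directly plug $\Psi(y,s)=\chi(s)$ (a cutoff in time, suitably mollified in the $t$-variable to approximate $\mathbbm{1}_{[0,t]}$) — or more precisely the appropriate affine-in-$y$ test functions $\Psi\equiv1$, $\Psi=y_n$ and the vector test function $\Phi=gte_n$ — into the weak formulation of Definition \ref{def:weaksolslin}, which already has the correct boundary terms baked in, and read off the identity; this avoids delicate pointwise regularity issues at $\partial\mathscr D(t)$.

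I expect the main obstacle to be bookkeeping of the boundary terms: one must check that the three boundary conditions in \eqref{eq:new_boundary_conditions} are exactly enough to kill all the boundary contributions generated by the combination of (a) the time-dependent domain $\mathscr D(t)$ moving with normal speed $gt$ in the $e_n$ direction, and (b) the integrations by parts of $\divv h$, $\divv\sigma$, $\divv m$, and $\nabla q$. The cleanest route is the weak-formulation one: use Definition \ref{def:weaksolslin} with the test functions $\Psi=\theta_\varepsilon(s)$ and $\Psi=y_n\theta_\varepsilon(s)$ in the scalar equations and $\Phi=gs\,\theta_\varepsilon(s)e_n$ in the momentum equation, where $\theta_\varepsilon\to\mathbbm{1}_{[0,t]}$ in an approximating sense, add the resulting identities with the right signs, let $\varepsilon\to0$, and note that the boundary integrals $\int_0^T q\Phi\cdot\vec n$, $\int_0^T gs\int q\Psi e_n\cdot\vec n$ appearing in Definition \ref{def:weaksolslin} already correctly encode the pressure work through the moving boundary. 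Since $\nu\in L^1$ by hypothesis, the term $-\nu[\Psi]$ converges to $-\int_0^t\int_{\mathscr D(s)}\nu\,dy\,ds$, which is the anomalous dissipation, and rearranging yields precisely $G(t)$ on one side and $E(0)+\int_0^t\nu$ on the other. Combining with the algebraic reduction of the first paragraph completes the proof.
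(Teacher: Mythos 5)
The approach is correct and closely parallels the paper's. Both ultimately extract the identity by testing the linear weak formulation (Definition \ref{def:weaksolslin}) with a time cutoff $\Psi_2\approx\mathbbm{1}_{[0,t]}$ in the energy equation, and with the affine test functions $\Phi=g\tau\Psi_2 e_n$ (momentum equation) and $\Psi=gy_n\Psi_2$ (mass equation); both also need the continuity-in-time argument via mollification (as in \cite[Lemma 8]{DeL-Sz-Adm}), which you gesture at but should state. The one genuine reorganization is on the solution side: the paper tests the nonlinear Definition \ref{def:weaksols} for $(\rho_{sol},v_{sol})$ with $\Psi_2$ to obtain $E(t)-E(0)=-\nu_{sol}[\Psi_2]$, then invokes $\nu_{sol}=\nu$ via Lemma \ref{lem:translation} and Theorem \ref{thm:main_theorem}, and matches this with the subsolution's Definition \ref{def:weaksolslin}. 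You instead reduce algebraically $E(t)=\int e_{sol}-m_{sol,n}gt+\mu_{sol}gy_n\,dy$ using the pointwise constraints, and then rely on the observation that $z_{sol}$ and $z$ solve the \emph{same} linear system with the same $q$, $\nu$ and initial data, so the $G$-functional is the same for both. This is equivalent and arguably cleaner, since it stays entirely in the linear setting.

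One gap in the write-up: the sentence claiming the algebraic reduction gives ``exactly the right-hand side'' prematurely identifies $e_{sol},m_{sol,n},\mu_{sol}$ with the subsolution's $e,m_n,\mu$; these differ pointwise inside $\mathscr U$. To close the argument you must either (i) run the $G(t)=E(0)+\int_0^t\int\nu$ computation for \emph{both} tuples $z$ and $z_{sol}$ and note they give the same answer because the data $q,\nu,(\rho_0,v_0,e_0)$ and the boundary conditions are shared, or (ii) observe that the difference $z_{sol}-z$ solves the homogeneous linear system with zero initial data and zero $\nu$, so $G_{z_{sol}-z}\equiv 0$. Your opening paragraph hints at this (``it suffices to prove the stated formula for any $L^\infty$ tuple\ldots''), but the combining step at the end is not yet explicit; as written, you have shown $E(t)=G_{z_{sol}}(t)$ and $G_z(t)=E(0)+\int_0^t\int\nu$, which only yields the lemma once $G_{z_{sol}}=G_z$ is argued.
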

\begin{proof}
Let us first note that, similarly to  \cite[Remark 1.2 b)]{GK-EE}, if one assumes that $\nu$ is in addition an $L^1$ function, one can show proceeding as in \cite[Lemma 8]{DeL-Sz-Adm} that for any $\phi\in L^2(\Omega)$, the map $[0,T)\ni t\mapsto \int_\Omega \left(\frac{1}{2}\rho_{sol}\abs{v_{sol}}^2+\rho_{sol}gx_n\right)\phi\:dx$ can be assumed to be continuous. Thus, for fixed $t\in[0,T)$ we may formally use a test function $\Psi_2(x,\tau):=\mathbbm{1}_{[0,t]}(\tau)\mathbbm{1}_\Omega(x)$ in Definition \ref{def:weaksols}, by a standard mollification argument and passing to the limit. One may apply a similar argument concerning the expression $e-m_ngt+\mu gy_n$.

We denote by $\nu_{sol}$ the dissipation measure of the induced solutions $(\rho_{sol},v_{sol})$, by $\tilde{\nu}_{sol}$ the dissipation measure of the transformed solutions $(\mu_{sol},w_{sol})$, and $\tilde{\Psi}_2(y,t)=\Psi_2(x-1/2gt^2e_n,t)$. Lemma \ref{lem:translation}, in particular \eqref{eq:translation_of_dissipation_measures}, and Theorem \ref{thm:main_theorem} then imply
\begin{align*}
    \nu_{sol}[\Psi_2]=\tilde{\nu}_{sol}[\tilde{\Psi}_2]=\nu[\tilde{\Psi}_2].
\end{align*} 
Thus, Definitions \ref{def:weaksols} and \ref{def:weaksolslin} yield
\begin{align}\label{eq:totenergytest}
\begin{split}
   - \int_\Omega &\frac{1}{2}\rho_{sol}\abs{v_{sol}}^2+\rho_{sol}gx_n\:dx +  \int_\Omega \rho_{0}gx_n\:dx \\
   &\hspace{50pt}=
   - \int_{\mathscr{D}(t)} e \: dy - \int_0^t g\tau  \int_{\partial\mathscr{D}(\tau)}q e_n\cdot \vec{n}\ dy\ d\tau.
   \end{split}
\end{align}
On the other hand, through a similar argument as above, one may also use $\Phi(y,\tau):=g\tau\Psi_2 e_n=g\tau \mathbbm{1}_{[0,t]}(\tau)e_n$ as a test function for the first equation of Definition \ref{def:weaksolslin} to get that
\begin{align*}
    \int_0^t g\tau \int_{\partial\mathscr{D}(\tau)}q e_n\cdot \vec{n}\ dy\ d\tau &= \int_{\mathscr{D}} m_n \partial_\tau(g\tau\Psi_2) \ dy \ d\tau \\
    &= -\int_{\mathscr{D}(t)} m_n gt \ dy + \int_{\mathscr{D}} m \cdot \nabla (gy_n\Psi_2) \ dy \ d\tau \\
    &= -\int_{\mathscr{D}(t)} m_n gt \ dy -\int_\Omega \rho_{0}gy_n\: dy-\int_{\mathscr{D}} \mu \partial_\tau (gy_n\Psi_2) \ dy \ d\tau
    \\ &= -\int_{\mathscr{D}(t)} m_n gt \ dy -\int_\Omega \rho_{0}gy_n\: dy + \int_{\mathscr{D}(t)} \mu g y_n \ dy,
\end{align*}
where in the penultimate step we also used $\Psi:=gy_n\Psi_2$ in the third equation of Definition  \ref{def:weaksolslin}. Combining the above with \eqref{eq:totenergytest} finishes the proof.
\end{proof}

\section*{Acknowledgements} 

J. J. Kolumb\'an 
was supported by the J\'anos Bolyai Research Scholarship of the Hungarian Academy of Sciences. 
~B.G. is funded by the Deutsche Forschungsgemeinschaft (DFG, German Research Foundation) under Germany's Excellence Strategy EXC 2044-390685587, Mathematics Münster: Dynamics-Geometry-Structure.
In parts the research has also been carried out at Universidad Aut\'onoma de Madrid and Max Planck Institute for Mathematics in the Sciences, where B.G. acknowledges the support through Mar\'ia Zambrano Grant CA6/RSUE/2022-00097 and the great hospitality of MPI Leipzig.
The authors would like to thank Konstantin Kalinin for interesting discussions related to \cite{Kalinin_Menon_Wu}.

\bibliographystyle{abbrv}
\bibliography{bib}

\vspace{10pt}

\noindent Bj\"orn Gebhard\\
Mathematics M\"unster, University of M\"unster, 48149 Münster, Germany\\
bjoern.gebhard@uni-muenster.de

\vspace{10pt}

\noindent J\'ozsef J. Kolumb\'an \\
Budapest University of Technology and Economics, Department of Analysis and Operations Research, 1111 Budapest, M\H uegyetem rkp. 3, Hungary\\
jkolumban@math.bme.hu

\end{document}